\newtheorem{maintheorem}{Theorem}
\newtheorem{mainpro}[maintheorem]{Proposition}
\newtheorem{teo}{Theorem}[section]
\newtheorem{pro}[teo]{Proposition}
\newtheorem{cor}[teo]{Corollary}
\newtheorem{lem}[teo]{Lemma}
\newtheorem{rem}[teo]{Remark}
\newtheorem{question}[teo]{Question}
\newcommand{\C}{\mathbb C}
\newcommand{\Hyp}{\mathbb H}
\newcommand{\Prob}{\mathbb P}
\newcommand{\tProb}{\widetilde{\mathbb P}}
\newcommand{\DD}{\mathcal D}
\newcommand{\FF}{\mathcal F}
\newcommand{\GG}{\mathcal G}
\newcommand{\CPun}{\mathbb{C}\mathbb{P}^1}
\newcommand{\CPdeux}{\mathbb{C}\mathbb{P}^2}
\newcommand{\PSLC}{PSL_2(\mathbb{C})}
\newcommand{\PSLR}{PSL_2(\mathbb{R})}
\newcommand{\SLC}{SL_2(\mathbb{C})}
\newcommand{\tS}{\widetilde{S}}
\title{On the dynamics of Riccati foliations with non parabolic monodromy representations.}
\author{Nicolas Hussenot Desenonges}
\begin{document}

\selectlanguage{english}
\maketitle

\begin{abstract}
In this paper, we study the dynamics of Riccati foliations over non-compact finite volume Riemann surfaces. More precisely, we are interested in two closely related questions: the asymptotic behaviour of the holonomy map $Hol_t(\omega)$ defined for every time $t$ over a generic Brownian path $\omega$ in the base; and the analytic continuation of holonomy germs of the foliation along Brownian paths in transversal complex curves. When the monodromy representation is parabolic (i.e. the monodromy around any puncture is a parabolic element in $\PSLC$), these two questions have been solved respectively in \cite{DD2} and \cite{Hus}. Here, we study the more general case where at least one puncture has hyperbolic monodromy. We characterize the lower-upper, upper-upper and upper-lower classes of the map $Hol_t(\omega)$ for almost every Brownian path $\omega$. And we prove that the main result of \cite{Hus} still holds in this case: when the monodromy group of the foliation is \og big enough \fg, the holonomy germs can be analytically continued along a generic Brownian path.
\end{abstract}



\let\thefootnote\relax\footnote{Keywords: Conformal dynamics, foliation, Brownian motion, Lyapunov exponents.

Mathematics Subject classification: 37F75, 37A50, 37C85, 37H15, 32D15.}

\section{Introduction.}

Let us consider the following differential equation in $\mathbb{C}^2$:
\begin{equation}\label{eq-dif}
\frac{dy}{dx}=\frac{P(x,y)}{Q(x,y)}
\end{equation}
where $P$ and $Q$ are polynomials in $\mathbb{C}[X,Y]$ without common factors.
The solutions of \eqref{eq-dif} define a singular holomorphic foliation of complex dimension $1$ in $\mathbb{C}^2$ which can be extended to a singular holomorphic foliation $\FF$ of $\CPdeux$. Very few is known about the dynamics of such a foliation. For example, we do not know if all the leaves accumulate on the singular set. We should mention, nevertheless, two important results of the theory: the minimality and Lebesgue-ergodicity of the leaves on an open set of the set $\FF_d$ of all foliations of degree $d$  \cite{LR}, the  existence and uniqueness of a harmonic current \cite{FS} for a generic foliation of $\FF_d$. To study the dynamics of a foliation, we can put a metric along the leaves and try to understand the asymptotic of a generic Brownian path in a leaf. It is the so called L. Garnett's theory of harmonic measures \cite{Gar}. This approach turned out to be very fruitful (at least in the non-singular case): see for example \cite{Ghy}, \cite{DK}.\newline

This paper was motivated by the two following questions:
\begin{enumerate}
\item What is the asymptotic behaviour of the holonomy map along a generic Brownian path in the leaves?
\item What can be said about the analytic continuation of holonomy germs of the foliation along a generic Brownian path in a transversal?
\end{enumerate}
In this paper, we restrict our study of these questions to the case of Riccati foliations that we think as examples of the general case of holomorphic foliations of $\CPdeux$. In this particular case, we answer partially to the first question and totally to the second one. Before stating the theorems, let us explain in more details the two questions in the general context of holomorphic foliations of $\CPdeux$.

\paragraph{Question 1:}Fix a Riemannian metric along the leaves varying countinuously in the transverse parameter. Then, we can define the Brownian motion along the leaves. If $L$ is a leaf and $\omega:[0,\infty[\rightarrow L$ is a Brownian path in $L$, take two discs $D_{\omega(0)}$ and $D_{\omega(t)}$ transverse to the foliation containing $\omega(0)$ and $\omega(t)$. If the discs are small enough, we can define a holonomy map $Hol_t(\omega):D_{\omega(0)}\rightarrow D_{\omega(t)}$ by \og sliding \fg along the leaves. A natural question is the following: what is the asymptotic of the map $Hol_t(\omega)$ for a generic path $\omega$? In the non-singular case, Deroin-Kleptsin \cite{DK} prove the following exponential contraction property: if $(M,\FF)$ is a compact manifold endowed with a transversely holomorphic foliation without singularities and if there is no transverse invariant measure, then there exists $\lambda<0$  such that for all $x$ in $M$ and almost every Brownian path $\omega$ starting et $x$, we have: 
\begin{equation}\label{DK}
\frac{1}{t}\log|Hol_t(\omega)'(x)|\underset{t\to\infty}\longrightarrow\lambda.
\end{equation}
It is known that a generic holomorphic foliation of $\CPdeux$ has no transverse invariant measure. Hence, it is natural to ask if a similar contraction property holds for a generic foliation of $\CPdeux$.

Very recently, V.A. Nguyen proved the existence of such a Lyapunov exponent $\lambda$ for a generic foliation of $\CPdeux$. Let us state this remarkable result. Let $\FF$ be a holomorphic foliation of $\CPdeux$ with hyperbolic singularities and without any invariant algebraic curve. According to Soares and Lins-Neto \cite{LNS}, these two conditions are generic. Under these assumptions, all the leaves are hyperbolic \cite{Glu}. Hence, there is a natural metric along the leaves: the Poincaré metric. According to a theorem of Candel and G\'omez-Mont, this family of metric along the leaves varies continuously in the transverse parameter \cite{CGM}. Moreover, according to \cite{FS}, the foliation supports a unique harmonic measure $\mu$ (in the sense of Garnett \cite{Gar}) associated with this metric. The result of Nguyen is the following \cite[Corollary 1.3]{Ngu}: there exists $\lambda\in\mathbb{R}$ such that \eqref{DK} holds for $\mu$ almost every base point $x$ and $\mathbb{P}_x$-almost every Brownian path starting at $x$.
Note that it is not known if $\lambda<0$.

\paragraph{Question 2:} The dynamics of a foliation can be encoded by the dynamics of its holonomy pseudo-group. The elements of this pseudo-group are local homeomorphisms between two small transversals. In our context, these holonomy maps are local biholomorphisms. Hence, it would be interesting to understand their nature with respect to analytic continuation. More precisely, take two non invariant algebraic lines $L_1$ and $L_2$ in $\CPdeux$ and let $h:(L_1,p_1)\to(L_2,p_2)$ be a holonomy germ of $\FF$. In \cite{L}, F. Loray conjectured that $h$ can be analytically continued along any path which avoids a countable set called the singularities (see also \cite{Il} for a conjecture in the same vein). In \cite{CDFG}, the authors show that, if the foliation has sufficiently rich contracting dynamics, Loray's conjecture does not hold. More precisely, they prove:
\begin{enumerate}
\item A generic foliation of $\CPdeux$ has a holonomy germ from an algebraic line to an algebraic curve whose singular set contains a Cantor set.
\item A Riccati foliation whose monodromy group is dense in $\PSLC$ has holonomy germs between algebraic lines with full singular set.
\end{enumerate}
Recall that a point $z$ is a \textit{singularitie} for analytic continuation of a germ $h:(L_1,p_1)\to(L_2,p_2)$ if there exists a continuous path $\alpha:[0,1]\rightarrow L_1$ with $\alpha(0)=p_1$, $\alpha(1)=z$ such that $h$ can be analytically continued along $\alpha([0,1-\varepsilon])$ for every $\varepsilon>0$ but $h$ cannot be analytically continued along $\alpha([0,1])$. According to Calsamiglia-Deroin-Frankel-Guillot, in general, there are germs $h:(L_1,p_1)\to(L_2,p_2)$ with a lot of singularities. Hence it is natural to ask if a generic Brownian path in $L_1$ leads to a singularitie. More precisely, for almost every Brownian path $\omega:[0,\infty[\to L_1$ with $\omega(0)=p_1$, does there exist a time $T(\omega)<\infty$ such that $h$ cannot be analytically continued along $\omega([0,T(\omega)])$?

\paragraph{Riccati foliations with non hyperbolic monodromy. }
These two questions have been solved in the case of Riccati foliations with non-hyperbolic monodromy. Let us explain the results in this case. Firstly, Recall that a Riccati equation is: 
\begin{equation}\label{eq-ric}
\frac{dy}{dx}=\frac{a(x)+b(x)y+c(x)y^2}{p(x)}
\end{equation}
where $a$, $b$, $c$ and $p$ are polynomials in $\mathbb{C}[X]$. Compactifying in $\CPun\times\CPun$ instead of $\CPdeux$, such foliations are characterized by the following geometric propertie: the foliation is transverse to all the vertical projective lines $\{x\}\times\CPun$ except for a finite set $\mathcal{S}$ (the zeroes of $p$) of values of $x$ for which $\{x\}\times\CPun$ is invariant for the foliation and contains the singularities. Hence a loop in $\CPun\setminus\mathcal{S}$ based in a point $a$ can be lifted via the projection on the first factor in the leaves of the foliation defining a biholomorphism of the fiber $\{a\}\times\CPun$. This holonomy map only depends on the homotopy class of the loop, hence we obtain a morphism $\rho:\pi_1(\CPun\setminus\mathcal{S})\to\PSLC$ called the monodromy representation of the foliation. 

More generally, we will call \textit {Riccati foliation} the data of a complex $2$-dimensional compact manifold $M$ which is a $\CPun$-fiber bundle over a compact Riemann surface $\Sigma$ endowed with a complex $1$-dimensional singular foliation $\FF$ which is transverse to all the fibers except a finite number of them which are invariant for the foliation. As in the case of Riccati equations in $\CPun\times\CPun$, a Riccati foliation comes with a monodromy representation $\rho:\pi_1(\Sigma\setminus\mathcal{S})\rightarrow\PSLC$ where $\mathcal{S}$ is the set of points in $\Sigma$ whose fibers are invariant. Later on, a Riccati foliation will be denoted $(M,\Pi,\FF,\Sigma,\rho,\mathcal{S})$ where $\Pi:M\rightarrow\Sigma$ is the natural projection of the fiber-bundle.

The monodromy representation is said to be \textit{non-hyperbolic} if for any $\alpha\in\pi_1(\Sigma\setminus\mathcal{S})$ making one loop around a puncture, the image $\rho(\alpha)$ is not a hyperbolic matrix in $\PSLC$ (i.e. $\rho(\alpha)$ is parabolic or elliptic).

The monodromy representation is said to be \textit{non-elementary} if there is no probability measure on $\CPun$ invariant by the action of the monodromy group $\rho(\pi_1(\Sigma\setminus\mathcal{S}))$.

In the context of Riccati foliations, the map $Hol_t$ is defined in the following way. Consider a smooth family of metrics with curvature $+1$ in the fibers compatible with their conformal structure. Assume that $\Sigma\setminus\mathcal{S}$ is hyperbolic. Endow it with the Poincaré metric. Fix a base point $a$ and consider a Brownian path $\omega$ starting at $a$. The leaves being transverse to the fibers over $\Sigma\setminus\mathcal{S}$, we can lift $\omega$ in the leaves. This defines a holonomy biholomorphism $Hol_t(\omega): \Pi^{-1}(\omega(0))\to\Pi^{-1}(\omega(t))$. Then we can define $||Hol_t(\omega)||$ by $\sup_{z\in\Pi^{-1}(\omega(0))} |Hol_t(\omega)'(z)|$ where the modulus of the derivative is with respect to the conformal metrics defined in the fibers.

In this case, the answer to Question $1$ is given in \cite[Proposition 2.5 and Proposition A]{DD2}: there exists a constant $\lambda> 0$ (the Lyapunov exponent), such that for almost every Brownian path $\omega$ starting at $a$: 
\begin{equation}
\label{Lyapunov}
\frac{1}{t}\log||Hol_t(\omega)||\underset{t\to\infty}\longrightarrow\lambda.
\end{equation}
The proof is essentially the following: denote by $\mathbb{P}(\cdot)=\int \mathbb{P}_x(\cdot)\cdot dvol(x)$ the probability measure on the set of infinite continuous paths in $\Sigma\setminus\mathcal{S}$ where $\mathbb{P}_x$ is the Wiener measure for the set of continuous paths starting at $x$ and $vol$ is the volume measure associated with the Poincaré metric in $\Sigma\setminus\mathcal{S}$. The norm $||\cdot||$ being submultiplicative, the family of functionals $\omega\mapsto\log||Hol_t(\omega)||$ defines a subadditive cocycle. If 
\begin{equation}
\label{integrability}
\int\log||A_1(\omega)||d\mathbb{P}(\omega)<+\infty,
\end{equation}
then by application of Kingman's subadditive ergodic theorem, we obtain a constant $\lambda$ (the Lyapunov exponent), such that \eqref{Lyapunov} is satisfied for almost every Brownian path $\omega$. It turns out that the integrability condition \eqref{integrability} is equivalent to the monodromy representation $\rho$ being non-hyperbolic. \\

Concerning the second question, we proved in \cite{Hus} that even when the set of singularities is big (Cantor or full), it is possible that the Brownian motion does not see this full set of singularities. More precisely, we proved:

\begin{teo}\label{theoRiccatiparabolic}
Let $(M,\Pi,\FF,\Sigma,\rho,\mathcal{S})$ be a Riccati foliation. Assume that the monodromy representation is non-elementary and non-hyperbolic. Let $F$ be a non invariant fiber and let $s_0$, $s_1$ be two holomorphic sections of the bundle. Denote by $S_0$ and $S_1$ the images of $\Sigma$ by $s_0$ and $s_1$. Endow $F$, $S_0$ and $S_1$ with complete metrics in their conformal class. Assume moreover that the developing map associated with $S_0$ is onto.
\begin{enumerate}
\item If $h:(F,p)\to (S_0,p_0)$ is a holonomy germ, then $h$ can be analytically continued along almost every Brownian path in $F$ starting at $p$.
\item If $h:(S_1,p_1)\to(S_0,p_0)$ is a holonomy germ, then $h$ can be analytically continued along almost every Brownian path in $S_1$ starting at $p_1$.
\end{enumerate}
\end{teo}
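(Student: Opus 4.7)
The plan is to lift everything to the universal cover of $\Sigma\setminus\mathcal{S}$, translate analytic continuation into a path-lifting problem for the developing maps, and rule out escape by means of the positive Lyapunov exponent $\lambda>0$ from \eqref{Lyapunov}. Writing $\widetilde{\Sigma\setminus\mathcal{S}}=\mathbb{H}$, the restricted bundle trivializes as $\mathbb{H}\times\CPun\to\mathbb{H}$ with horizontal foliation, on which $\pi_1(\Sigma\setminus\mathcal{S})$ acts diagonally via deck transformations and $\rho$. The sections $s_0,s_1$ lift to $\rho$-equivariant developing maps $dev_0,dev_1:\mathbb{H}\to\CPun$, and $dev_0$ is surjective by hypothesis. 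In these coordinates, a germ $h:(F,p)\to(S_0,p_0)$ is an inverse branch of $dev_0$ near a preimage $\tilde{b}$ of $p$, and a germ $h:(S_1,p_1)\to(S_0,p_0)$ is locally $dev_0^{-1}\circ dev_1$. In both cases analytic continuation of $h$ along a Brownian path $\omega$ reduces to continuing a chosen inverse branch of $dev_0$, which succeeds as long as the lift stays in a compact subset of $\mathbb{H}$ and avoids the critical set of $dev_0$.

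The critical set of $dev_0$ is discrete in $\mathbb{H}$, so its image in $\CPun$ is countable; both the spherical Brownian motion on $F\cong\CPun$ (for Part 1) and the hyperbolic Brownian motion on $S_1\cong\Sigma\setminus\mathcal{S}$ (for Part 2) avoid any given countable set almost surely, by polarity of points in dimension two. Hence the only obstruction that remains to rule out is escape of the lift to the ideal boundary of $\mathbb{H}$.

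For this escape problem, in the non-hyperbolic setting the integrability \eqref{integrability} holds, Kingman's subadditive ergodic theorem applies as in \cite{DD2}, and \eqref{Lyapunov} provides a Lyapunov exponent $\lambda>0$ almost surely. The point is that positivity of $\lambda$ guarantees a contracting direction in the fibres, which on the universal cover translates to a lower bound of order $e^{-\lambda t}$ on the radius of the disc where the inverse branch of $dev_0$ is defined; correspondingly, the hyperbolic displacement of the lift along $\omega$ grows only sub-exponentially. A Borel--Cantelli argument across a dyadic sequence of times, together with non-elementarity (which rules out $\rho$-invariant probability measures on $\CPun$ and delivers recurrence of the foliated Brownian motion to a compact part of the base), excludes finite-time escape almost surely.

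I expect the main obstacle to be converting the almost sure, time-averaged convergence \eqref{Lyapunov} into a \emph{uniform} pathwise estimate valid on every finite time interval, so that escape at any time can be excluded. This is where the surjectivity of $dev_0$ plays its essential role, by providing alternative inverse branches to fall back on whenever one of them threatens to approach the boundary of $\mathbb{H}$, and where the non-elementarity of $\rho$ is needed in order to prevent systematic drift of the Brownian path towards a $\rho$-invariant configuration on $\CPun$ that would obstruct the contraction estimate.
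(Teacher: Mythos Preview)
Your setup is correct and matches the paper: lifting to $\mathbb{H}$, identifying $h$ with a local inverse of the developing map $\DD_0$, and reducing analytic continuation of $h$ along Brownian paths in $F$ to the statement that $\DD_0(\omega(t))$ has no limit as $t\to\infty$ for a Brownian path $\omega$ in $\mathbb{H}$ (this is Theorem~\ref{theoprincipal}, and the reduction is exactly Section~\ref{section6}). Handling the critical set via polarity is fine.

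The gap is in your argument for non-escape. The Lyapunov exponent $\lambda$ in \eqref{Lyapunov} measures growth of $\|Hol_t(\omega)\|$ for Brownian motion $\omega$ \emph{in the base} $\Sigma\setminus\mathcal{S}$, not in the fibre. Your sentence ``positivity of $\lambda$ \dots\ translates to a lower bound of order $e^{-\lambda t}$ on the radius of the disc where the inverse branch of $dev_0$ is defined'' has no content: the domain of a local inverse of $\DD_0$ near a regular value is governed by the nearest critical value, not by holonomy norms, and in any case what must be excluded is escape of the lift to $\partial\mathbb{H}$ in finite time, which is unrelated to such a radius. Your claim that ``the hyperbolic displacement of the lift along $\omega$ grows only sub-exponentially'' is not what is needed (the lifted path \emph{is} the hyperbolic Brownian motion, whose displacement grows linearly), and your proposed ``Borel--Cantelli across dyadic times'' never gets off the ground because you have no event to feed into it.

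The paper's mechanism is quite different. One works on the $\mathbb{H}$ side and applies the Furstenberg--Lyons--Sullivan discretization (Section~\ref{section3}): the Brownian path $\omega$ is cut into i.i.d.\ pieces $c_k$ so that $\DD_0(\omega)=\cdots * Y_{N_k}\DD_0(c_k)*\cdots$ with $Y_{N_k}=\rho(X_{N_k})$ a random walk in $\rho(\Gamma)$. Since $\|Y_{N_k}\|\to\infty$ (in the non-hyperbolic case, at rate $e^{\lambda k}$ by \eqref{Lyapunov}), each $Y_{N_k}$ has north--south dynamics on $\CPun$: it sends the complement of a shrinking disc around its south pole $s_k$ into a shrinking disc around its north pole $n_k$, and sends a tiny disc around $s_k$ to the complement of $D(n_k,\tfrac12)$ (Lemma~\ref{lemmetechnique1}). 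Surjectivity of $\DD_0$ is used here, not to ``provide alternative inverse branches'', but to guarantee that $\DD_0^{-1}(D(s_k,\varepsilon_k))$ contains a disc of controlled size in a fixed compact of $\mathbb{H}$, so that the independent piece $c_k$ hits it with probability $\gtrsim 1/(k\log k)$; a conditional Borel--Cantelli (Lemma~\ref{lemmeEk}) then shows that for infinitely many $k$ the segment $Y_{N_k}\DD_0(c_k)$ visits both $\overline{D(n_k,\|Y_{N_k}\|^{-1})}$ and $D(n_k,\tfrac12)^c$. Hence $\DD_0(\omega(t))$ oscillates and cannot converge. The Lyapunov exponent enters only to calibrate the radius $\varepsilon_k$ against $\|Y_{N_k}\|$ so that the Borel--Cantelli series diverges; it is not used to bound any ``radius of definition'' of $\DD_0^{-1}$.
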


\begin{rem}
\begin{enumerate}
\item The theorem is stated in \cite{Hus} only in the parabolic case (i.e. when the image $\rho(\alpha)$ of any $\alpha\in\pi_1(\Sigma\setminus\mathcal{S})$ making one loop around a puncture is a parabolic matrix) but it is easy to see that it is still true if we allow $\rho(\alpha)$ to be elliptic.
\item One of the key points of the proof of the theorem is the existence and positivity of the Lyapunov exponent. 
\item The hypothesis \og the developing map associated with $S_0$ is onto\fg refers to the natural complex projective structure on $S_0\setminus s_0(\mathcal{S})$ induced by the foliation. The notion of complex projective structure and the link with Riccati foliations will be explained later. 
As it will be clear from the definition of the developing map $\DD_0$, if the action of the monodomy group $\rho(\pi_1(\Sigma\setminus\mathcal{S}))$ on $\CPun$ is minimal, then $\DD_0$ is onto (this is because the image of $\DD_0$ is an open set invariant by the action of $\rho(\pi_1(\Sigma\setminus\mathcal{S}))$). Hence, we can replace the hypothesis \og the developing map associated with $S_0$ is onto \fg in the theorem by \og the action of the monodomy group is minimal \fg in order to have hypothesis depending only on the monodromy of the foliation and not depending on the underlying complex projective structures.
\end{enumerate}
\end{rem}

\paragraph{The two main results.} 

Firstly, we give a (partial) answer to the first question for Riccati foliations whose base $\Sigma\setminus\mathcal{S}$ contains a loop $\alpha_0$ around a puncture such that $\rho(\alpha_0)$ is a hyperbolic matrix.

\begin{maintheorem}\label{theoHolonomie}
Let $(M,\Pi,\FF,\Sigma,\rho,\mathcal{S})$ be a Riccati foliation. Assume that the monodromy representation is non-elementary and that there exists a loop $\alpha_0$ around a puncture such that $\rho(\alpha_0)$ is a hyperbolic matrix. Let $h$ be a nondeacresing positive function. For all $a\in\Sigma\setminus\mathcal{S}$ and for almost every $\omega\in\Omega_a$, we have:
\begin{enumerate}
\item $$\liminf_{t\to\infty}\frac{\log||Hol_t(\omega)||}{t\cdot \log t}<+\infty.$$
\item If $\int_1^{\infty}\frac{dt}{h(t)}<+\infty$, then 
$$\limsup_{t\to\infty}\frac{\log||Hol_t(\omega)||}{h(t)}=0.$$
\item If $\int_1^{\infty}\frac{dt}{h(t)}=+\infty$, we have: 
$$\limsup_{t\to\infty}\frac{\log||Hol_t(\omega)||}{h(t)}=+\infty.$$
\end{enumerate}
\end{maintheorem}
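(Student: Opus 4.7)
My plan is to reduce the asymptotics of $\log\|Hol_t(\omega)\|$ to the statistics of the winding number of $\omega$ around the hyperbolic puncture fixed by $\alpha_0$, and then to apply a Borel--Cantelli/integral-test argument for heavy-tailed sums. The first step is to localize near the distinguished cusp. Let $V\subset\Sigma\setminus\mathcal{S}$ be a small horocyclic neighborhood of the puncture fixed by $\alpha_0$, uniformized by $\{y>1\}/\langle z\mapsto z+1\rangle$ in the upper half-plane model of $\Hyp$. I decompose $\omega$ into excursions inside $V$ and transit segments outside $V$. On the transit segments, the monodromy is governed by a cocycle over a Brownian motion on the truncated surface $\Sigma\setminus(\mathcal{S}\cup V)$, whose fiberwise norm is now $L^{1}$ since the only non-integrable puncture has been removed. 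Kingman's theorem, exactly as in the non-hyperbolic case recalled after \eqref{integrability}, yields an $O(t)$ contribution from this part of $\omega$.

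The second step is to compute the contribution of each $V$-excursion. A single excursion with signed winding $w_i\in\mathbb{Z}$ around the puncture produces a holonomy contribution of the form $\rho(\alpha_0)^{w_i}$, modulated by bounded endpoint corrections. Writing $c=\log|\lambda|$ where $\lambda$ is the expanding eigenvalue of $\rho(\alpha_0)$, I expect
\[
\log\|Hol_t(\omega)\|\;=\;c\sum_{i:\,\tau_i\le t}|w_i|\;+\;O(t),
\]
where the sum is over excursions completed by time $t$. The probabilistic input is the distribution of $|w_i|$. Lifting a single excursion to $\Hyp$ and conditioning on its maximum height $H$, the scale function for $\log Y_t$ (a Brownian motion of drift $-1/2$) gives excursion density $\sim dH/H^{2}$, while the horizontal displacement $\int Y_s\,dB_s^{(1)}$ is, conditionally on $H$, approximately Cauchy with scale $\sim H$. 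Integrating out $H$ yields the single-excursion tail $\mathbb{P}(|w_i|\ge W)\sim C/W$, and by ergodicity of the Brownian motion on the finite-volume base the excursion point process has rate bounded away from $0$ and $\infty$.

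With these ingredients the three parts of Theorem~A reduce to integral tests for the heavy-tailed sequence $(|w_i|)$ along dyadic windows, the windows being nearly independent thanks to the exponential mixing of Brownian motion on finite-volume hyperbolic surfaces. For part~(1), pick the deterministic subsequence $t_k=e^{k}$ and the event ``no $V$-excursion with $|w|>t_k$ occurs in $[0,t_k]$'', which has probability bounded below uniformly in $k$; on this event a Kesten/Feller-type estimate for Pareto sums gives $\sum_{\tau_i\le t_k}|w_i|\le C\,t_k\log t_k$, hence $\liminf \log\|Hol_t\|/(t\log t)<\infty$. For parts~(2) and~(3), partition time into dyadic intervals $[2^k,2^{k+1}]$; the probability of a $V$-excursion with $|w|\ge h(2^k)$ in such an interval is $\asymp 2^k/h(2^k)$, and $\sum_k 2^k/h(2^k)$ converges or diverges together with $\int_1^\infty dt/h(t)$. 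The first Borel--Cantelli lemma handles the convergent case and gives $\limsup=0$; the second Borel--Cantelli lemma, applied to the near-independent events across well-separated dyadic blocks, handles the divergent case and gives $\limsup=+\infty$.

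The main obstacle is the precise separation in Step~2: showing, with effective control of the error terms, that the non-cusp part of the holonomy cocycle really only adds a linear $O(t)$ term and does not secretly resonate with the expanding eigendirection of $\rho(\alpha_0)$ so as to produce cancellations in the winding sum. This is where the non-elementarity hypothesis is essential: it forbids any invariant probability measure on $\CPun$, hence any invariant line bundle along the transit cocycle which could align with the contracting/expanding directions of $\rho(\alpha_0)$ and cause telescoping of the $|w_i|$. A uniform-in-base-point $KAK$-decomposition of the transit monodromy, combined with Furstenberg-style positivity arguments from non-elementarity, should make the decomposition in Step~2 rigorous and close the proof.
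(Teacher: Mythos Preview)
Your approach to Items (1) and (2) is essentially the same as the paper's: the paper bounds $\log\|Hol_t\|\le \lambda L_t$ by the word length and then invokes Gruet's integral test for $L_t$, whose proof is precisely the excursion/Cauchy-tail analysis you sketch. So those parts are fine.

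The gap is in Item (3). Your central decomposition
\[
\log\|Hol_t(\omega)\|=c\sum_{i:\tau_i\le t}|w_i|+O(t)
\]
is simply false, and no amount of non-elementarity repairs it. Take $\gamma_0=\mathrm{diag}(\lambda,\lambda^{-1})$ and the transit element $g=\begin{pmatrix}0&1\\-1&0\end{pmatrix}$, which swaps the two fixed points of $\gamma_0$. Then $\gamma_0^{N}\,g\,\gamma_0^{N}=g$, so two excursions with $|w_1|=|w_2|=N$ separated by a transit of bounded norm produce a product of norm $1$, not $|\lambda|^{2N}$. The paper flags exactly this issue: there are words of arbitrarily large length whose $\rho$-image has small norm, so lower bounds on $\|Hol_t\|$ cannot be read off from winding sums. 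Your proposed fix via a ``uniform $KAK$-decomposition of the transit monodromy combined with Furstenberg positivity'' is a hope, not an argument; it would have to rule out, quantitatively and for all times, the alignment phenomenon above, and you have not indicated how.

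The paper does something quite different for Item (3). It never attempts a global decomposition. Instead, working along the discretized random walk $(X_{N_k})$, it isolates two events at step $k$: the event $B_k$ that the $k$-th increment is $\alpha_0^n$ with $n\ge\widetilde h(k)$ (your winding tail, giving $\widetilde{\mathbb P}(B_k)\gtrsim 1/\widetilde h(k)$), and the \emph{geometric} event $A_k$ that the south pole of $\rho(X_{N_{k-1}})$ in its Cartan decomposition stays at distance at least $C_1|\lambda|^{-\widetilde h(k)}+\|\rho(X_{N_{k-1}})\|^{-1}$ from the attractive fixed point $z_0$ of $\gamma_0$. Using symmetry of the discretized measure and convergence of north poles to a non-atomic stationary measure, the paper shows $\widetilde{\mathbb P}(A_k)\to 1$. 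A short probabilistic lemma (if $B_k$ i.o.\ a.s.\ and $\mathbb P(A_k)\to 1$ with $(A_k),(B_k)$ independent, then $A_k\cap B_k$ i.o.\ a.s.) then yields infinitely many $k$ at which a \emph{single} large winding occurs while the prefix is in general position. On such $k$, elementary $KAK$ estimates (the paper's Lemmas~\ref{lemmetechnique1}--\ref{lemmetechnique4}) give $\|\rho(X_{N_k})\|\ge A e^{B\widetilde h(k)}$, and choosing $\widetilde h$ with $\widetilde h/h\to\infty$ and $\int dt/\widetilde h=\infty$ finishes. The point is that the lower bound is obtained one step at a time, from a single dominant excursion combined with a genericity condition on the prefix, not from summing excursion contributions.
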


This answer is only partial because this gives no information about the lower-lower class of $||Hol_t||$. We ask the following question:
\begin{question}
Do we have almost surely, $\liminf_{t\to\infty}\frac{\log||Hol_t||}{t\log t}>0$? It could be easier to answer the weaker following question: do we have
 $$\liminf_{t\to\infty}\frac{\log||Hol_t(\omega)||}{t}=+\infty?$$
\end{question}

Secondly, we give an answer to the second question for all Riccati foliations (with non-elementary monodromy representation):

\begin{maintheorem}\label{theoRiccati}
Theorem \ref{theoRiccatiparabolic} is still true if we delete the hypothesis: \og the monodromy representation is non-hyperbolic \fg.
\end{maintheorem}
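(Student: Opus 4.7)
The plan is to revisit the proof of Theorem~\ref{theoRiccatiparabolic} from~\cite{Hus} and to replace the positivity of the Lyapunov exponent (which is unavailable here) by the quantitative estimate provided by Theorem~\ref{theoHolonomie}. First I would check that the bulk of~\cite{Hus} goes through verbatim: the encoding of the holonomy germ $h$ via the complex projective structures and the developing maps $\DD_0,\DD_1$ on $S_0,S_1$ (or on a fiber $F$), the use of the surjectivity of $\DD_0$ to identify Brownian trajectories with points in $\CPun$, and the pullback--Koebe distortion estimates. According to the remark following Theorem~\ref{theoRiccatiparabolic}, the only step using the non-hyperbolic hypothesis is the invocation of $\tfrac{1}{t}\log\|Hol_t(\omega)\|\to\lambda>0$, which is employed to prove that a small round disk in the target transversal pulls back under $Hol_t(\omega)^{-1}$ to a disk shrinking at the rate $e^{-\lambda t}$; a Borel--Cantelli step then promotes this positive-probability statement into almost sure analytic continuation.

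The next step would be to apply Theorem~\ref{theoHolonomie}~(2) with $h(t)=t(\log t)^{1+\varepsilon}$. Since $\int_1^{\infty}\frac{dt}{h(t)}<+\infty$, one obtains, for almost every Brownian path $\omega$, the bound $\log\|Hol_t(\omega)\|\leq t(\log t)^{1+\varepsilon}$ for all $t$ large enough, hence $\|Hol_t(\omega)^{-1}\|\geq e^{-t(\log t)^{1+\varepsilon}}$. Injected in the pullback--Koebe mechanism of~\cite{Hus} this produces a lower bound $e^{-t(\log t)^{1+\varepsilon}}$ for the pullback radius, instead of $e^{-\lambda t}$. The point is that the associated series $\sum_n e^{-n(\log n)^{1+\varepsilon}}$ still converges, so the Borel--Cantelli step applies without change and the almost sure analytic continuation follows.

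The hard part will be the failure of the one-step integrability~\eqref{integrability}: during a Brownian excursion into a cuspidal neighborhood of the hyperbolic puncture, the increments of $\log\|Hol_t(\omega)\|$ are not uniformly integrable and may spike in a short time interval, so the short-time cocycle arguments of~\cite{Hus} do not apply pointwise. A natural way to handle this is to combine the strong Markov property at the successive return times to a thick part of $\Sigma\setminus\mathcal{S}$ with the global bound given by Theorem~\ref{theoHolonomie}: on the thick part, the cocycle estimates of~\cite{Hus} apply unchanged, while the cumulative contribution of cuspidal excursions is absorbed in the macroscopic estimate $\log\|Hol_t(\omega)\|\leq t(\log t)^{1+\varepsilon}$. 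This decoupling between the micro-scale (thick part, cocycle estimates) and the macro-scale (cusp excursions, global bound from Theorem~\ref{theoHolonomie}) is likely to be the technically most delicate part of the adaptation.
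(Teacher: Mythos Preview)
Your plan has the right shape --- replace the Lyapunov exponent by a quantitative upper bound on $\|Hol_t\|$ and feed that into the argument of~\cite{Hus} --- but the specific bound you invoke is fatally too weak, and the Borel--Cantelli you describe is the wrong one.

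In the paper the reduction goes through Theorem~\ref{theoprincipal}: one must show that $\DD(\omega(t))$ almost surely fails to converge. After discretization this amounts to producing, infinitely often, an index $k$ for which $\DD(c_k)$ visits the tiny disc $D\bigl(s_k,\|Y_{N_k}\|^{-2}\bigr)$ centred at the south pole of $Y_{N_k}$. The probability that a Brownian excursion hits a disc of radius $R$ is of order $1/\log(1/R)$, so what matters is the \emph{divergence} of $\sum_k 1/\log\bigl(\|Y_{N_k}\|^2\bigr)$; this is the second Borel--Cantelli lemma, not the first. With your almost sure estimate $\log\|Y_{N_k}\|\le \lambda' k(\log k)^{1+\varepsilon}$ from Theorem~\ref{theoHolonomie}(2), the visiting probability is only $\gtrsim 1/\bigl(k(\log k)^{1+\varepsilon}\bigr)$, and this series \emph{converges}: the argument collapses right at the boundary. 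The series $\sum_n e^{-n(\log n)^{1+\varepsilon}}$ you mention is not the one that governs the argument.

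What the paper actually uses is Proposition~\ref{mainpro}(2), a convergence \emph{in probability} at the critical scale $t\log t$: there is $C>0$ with $\tProb\bigl(\|Y_{N_k}\|\le e^{\lambda' k\log k}\bigr)\to 1$. No almost sure bound at this scale is available (Theorem~\ref{theoHolonomie}(3) says precisely that $\limsup \log\|Hol_t\|/(t\log t)=+\infty$ a.s.), so an in-probability statement is unavoidable. The paper then combines this with Lemma~\ref{lemmeproba} --- if $\tProb(A_k)\to 1$, $B_k$ occurs infinitely often, and the two sequences are independent, then $A_k\cap B_k$ occurs infinitely often --- to recover the desired conclusion. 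The corresponding series is $\sum 1/(k\log k)=\infty$, which is exactly borderline. Your thick--cusp decoupling is unnecessary: the discretization absorbs the cusp excursions automatically, and the only subtle point is getting the growth scale down to $t\log t$ rather than $t(\log t)^{1+\varepsilon}$.
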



\paragraph{On the length of the homotopic Brownian word. }
The proof of theorem \ref{theoHolonomie} is based on the study of the asymptotic of the length of the homotopic Brownian word in a hyperbolic non-compact finite volume Riemann surface. More precisely, let $S$ be such a Riemann surface. Fix a base point $a\in S$ and fix a set of generators of $\pi_1(S,a)$. Take a generic Brownian path (with respect to the Poincaré metric of $S$) starting at $a$. For every time $t$, append $\omega_{|[0,t]}$ with the smallest geodesic going back to $a$. This defines a loop based in $a$ which can be written in a unique way as a reduced word in the generators. Denote by $L_t(\omega)$ the length of this word. 
In the case where $S$ is the thrice punctured sphere, Gruet \cite{Gru} proved:
\begin{teo}[Gruet]
\label{Gruet}
\begin{enumerate}
\item Almost surely, 
$$\liminf\limits_{t\to\infty} \frac{L_t}{t\log t}=cst>0.$$
\item If $t\mapsto h(t)$ is a positive increasing function such that $\int_1^{+\infty}\frac{dt}{h(t)}<+\infty$, then almost surely:
$$\limsup\limits_{t\to\infty} \frac{L_t}{h(t)}=0.$$
\item If $t\mapsto h(t)$ is a positive increasing function such that $\int_1^{+\infty}\frac{dt}{h(t)}=+\infty$, then almost surely:
$$\limsup\limits_{t\to\infty} \frac{L_t}{h(t)}=+\infty.$$
\end{enumerate}
\end{teo}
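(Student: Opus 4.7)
The plan is to reduce the three statements to classical strong limit theorems for sums of i.i.d.\ positive random variables with tail $\mathbb{P}(W>w)\sim c/w$, by decomposing the Brownian path into excursions into the three cusps. Fix disjoint horocyclic neighborhoods $U_1,U_2,U_3$ of the cusps and let $K=S\setminus\bigcup U_i$ be the compact thick part. Define the successive hitting times $T_0<T_1<T_2<\cdots$ of $\partial K$, so that the excursions $e_n:=\omega|_{[T_{2n-1},T_{2n}]}$ lie in some $U_{i(n)}$, and let $W_n\in\mathbb{Z}$ be the signed winding number of $e_n$ around the associated puncture. Up to an $O(1)$ error per excursion (from closing up by a geodesic and from each transit through $K$), the homotopic word length satisfies
\[L_t=\sum_{n=1}^{N_t}|W_n|+O(N_t),\]
where $N_t$ is the number of excursions completed before time $t$.

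In the cusp, using coordinates $(x,y)\in(\R/\mathbb{Z})\times[1,+\infty)$ with metric $(dx^2+dy^2)/y^2$, the Brownian motion satisfies $dx_t=y_t\,dB_1$ and $d(\log y_t)=dB_2-dt/2$. Thus $\log y$ is a Brownian motion with drift $-1/2$, from which a standard excursion-theoretic computation gives $\mathbb{P}(H>h)\asymp 1/h$ for the maximum $H$ of $y$ during an excursion. The $x$-displacement $x_\tau-x_0$ is a martingale of quadratic variation $\int_0^\tau y_s^2\,ds\asymp H^2$, so by the martingale central limit theorem $|W_n|\asymp H$ in distribution, yielding the heavy-tail estimate
\[\mathbb{P}(|W|>w)\sim\frac{c}{w}\quad\text{as }w\to\infty,\]
the borderline infinite-mean regime. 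The pointwise ergodic theorem for Brownian motion on the finite-volume surface $S$ gives $N_t/t\to\gamma>0$ almost surely, while the strong Markov property combined with mixing of the Brownian motion shows that $(W_n)_{n\geq 1}$ is asymptotically i.i.d.\ with this tail.

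One then invokes classical results on sums $S_n=|W_1|+\cdots+|W_n|$ of positive i.i.d.\ random variables with $\mathbb{P}(W>w)\sim c/w$. A refinement due to Feller and Kesten yields $\liminf_n S_n/(n\log n)=c_1>0$ almost surely, while the Erd\H{o}s--Kolmogorov integral test gives $\limsup_n S_n/h(n)=0$ if $\sum_n 1/h(n)<\infty$ and $+\infty$ otherwise; for nondecreasing $h$, this series and the integral $\int_1^\infty dt/h(t)$ converge together. Combining with $N_t\sim\gamma t$ and the decomposition of $L_t$ translates these three dichotomies into the three asserted conclusions.

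The main obstacle lies in the second step: the winding numbers $W_n$ are not genuinely independent, because the entry distribution for the $n$-th cusp excursion depends on where the previous excursion exited $\partial K$. Reducing rigorously to a truly i.i.d.\ sequence requires spectral-gap or coupling estimates for Brownian motion on $S$, together with a verification that the Kolmogorov--Erd\H{o}s test is robust under this weak dependence. A secondary technicality is extracting the sharp asymptotic $\mathbb{P}(W>w)\sim c/w$ (not merely $\asymp 1/w$), which calls for a careful joint analysis of the maximum of the drifted Brownian excursion $\log y_t$ and the quadratic variation of the time-changed $x$-displacement martingale.
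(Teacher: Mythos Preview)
The paper does not give its own proof of this theorem: it is quoted as a result of Gruet \cite{Gru}, and the paper only sketches what Gruet does before proving a strictly weaker statement (Proposition~\ref{mainpro}) for general finite-volume surfaces. So there is no proof in the paper to compare against line by line, but one can compare your outline both with what the paper reports about Gruet's argument and with the paper's own proof of the weaker Proposition~\ref{mainpro}.

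For Items~2 and~3 your excursion decomposition is essentially the same strategy the paper uses for Proposition~\ref{mainpro}: successive hitting times $U_n,V_n$ of nested horodiscs, Cauchy-tailed winding numbers in each cusp excursion, and an ergodic argument to control the contribution of the thick part. One point where the paper is sharper than your sketch: your ``secondary technicality'' about extracting the exact asymptotic $\mathbb{P}(|W|>w)\sim c/w$ is not needed. By conformal invariance one maps the horodisc to a Euclidean punctured disc, and there the winding at the exit time is \emph{exactly} Cauchy with parameter $\log(1/r)$, not merely asymptotically so. This bypasses your excursion-theoretic computation in upper-half-plane coordinates entirely.

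For Item~1 your route and Gruet's are genuinely different. According to the paper, Gruet's proof is specific to the thrice-punctured sphere: he passes through the analogous result for the geodesic flow and uses the Jacobi elliptic function as an explicit universal cover to reduce the geodesic word-length problem to the classical continued-fraction digits problem. Your proposal instead stays on the Brownian side and appeals to the Fristedt--Pruitt type lower-class result for i.i.d.\ sums with $1/w$ tails (which you attribute to Feller--Kesten; the paper cites \cite{FP}). The obstacle you yourself flag---that the $W_n$ are not independent because the entry law into each cusp depends on the previous exit point---is exactly the reason the paper cannot push this approach through to the full Item~1 and settles for the weaker $\liminf<\infty$ of Proposition~\ref{mainpro}. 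So your plan is coherent, but the dependence issue is not a mere technicality: it is the gap separating your outline from a proof, and the paper does not indicate how to close it by Brownian methods alone.
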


The proof he gives of Items $2$ and $3$ (see \cite[Theorem 3.1]{Gru}) can be reproduced verbatim in our general case of a hyperbolic non-compact finite volume Riemann surface. We think that Item $1$ also holds in the general case. Nevertheless, the proof of Gruet is specific to the case of the thrice punctured sphere since he deduces his theorem from the analogous result for the geodesic flow and uses the Jacobi elliptic function as a universal cover of the thrice punctured sphere in order to make the link between the geodesic word length and the continued fraction digits problem. 
Using ideas already present in the paper of Gruet, we prove that the following weaker results hold: 

\begin{mainpro} 
\label{mainpro}
Let $S$ be a hyperbolic non-compact finite volume Riemann surface. Let $a$ be a base point in $S$ and consider the Wiener measure $\mathbb{P}_a$ associated with the Poincaré metric of $S$. Then, we have:
\begin{enumerate}
\item $\mathbb{P}_a$- almost surely, 
$$\liminf\limits_{t\to\infty} \frac{L_t}{t\log t}<\infty.$$
\item There exists $C>0$ such that:
$$\lim\limits_{t\to\infty} \Prob_a\left(\frac{L_t}{t\log t}\leq C\right)=1.$$
\end{enumerate}
\end{mainpro}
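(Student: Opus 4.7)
The plan follows Gruet's strategy: decompose the Brownian path into excursions into the cusp neighborhoods and estimate the contribution of each excursion to the word length separately. Fix pairwise disjoint cusp neighborhoods $U_1,\dots,U_k$ of the cusps with compact complement $K$, a base point $a\in K$, and a generating set of $\pi_1(S,a)$ containing one parabolic generator $\gamma_i$ around each cusp $c_i$. Let $0<\sigma_1<\tau_1<\sigma_2<\tau_2<\cdots$ denote the successive entry and exit times of $\omega$ to $\bigcup_i U_i$, and let $N_t$ be the number of completed excursions before time $t$. Since the Brownian motion is ergodic for the finite hyperbolic volume and each excursion has finite mean duration, Kac's lemma together with the renewal theorem give $N_t/t\to c_0>0$ almost surely. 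Up to a bounded edit distance per excursion, the closure loop of $\omega|_{[0,t]}$ can be written as an alternating product of bounded-length ``thick-part pieces'' and ``cusp pieces'' of the form $g_j\gamma_{i_j}^{n_j}g_j^{-1}$, where $n_j\in\mathbb Z$ is the horizontal winding number of the $j$-th excursion and $g_j$ has length uniformly bounded in $j$. Thus
$$L_t=\sum_{j=1}^{N_t}|n_j|+O(N_t)=\sum_{j=1}^{N_t}|n_j|+O(t),$$
and since $O(t)=o(t\log t)$ the problem reduces to controlling the sum $\sum_j|n_j|$.

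Next I would analyze a single excursion in one cusp. Conjugating so that the cusp is at $\infty$ with parabolic generator $z\mapsto z+1$, the neighborhood lifts to $\{y>h_0\}/\mathbb Z$. Setting $u=\log(y/h_0)$, the Itô calculus of hyperbolic Brownian motion gives $du_s=dB^{(2)}_s-\tfrac12 ds$ and $dx_s=h_0 e^{u_s}dB^{(1)}_s$. Optional stopping applied to $e^u$ shows that the excursion maximum $M$ satisfies $\Prob(M\geq h)\asymp e^{-h}$, and conditionally on $M$, the net horizontal displacement $\Delta x=\int_0^{T}h_0 e^{u_s}dB^{(1)}_s$ is a centered Gaussian with quadratic variation of order $e^{2M}$; consequently the winding number has Pareto-1 tail $\Prob(|n_j|\geq N)\asymp 1/N$ as $N\to\infty$. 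The excursions are not strictly i.i.d.\ but form a stationary ergodic sequence under the invariant measure of the boundary Markov chain on $\partial K$, and a regeneration decomposition reduces their study to that of i.i.d.\ blocks. The classical Darling weak law for i.i.d.\ sums with Pareto-1 marginals then yields
$$\frac{1}{n\log n}\sum_{j=1}^{n}|n_j|\;\xrightarrow[n\to\infty]{\;\Prob\;}\;c_1>0,$$
and combining this with $N_t/t\to c_0$ a.s.\ gives $L_t/(t\log t)\to C:=c_0 c_1$ in probability.

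Item 2 of the proposition is then immediate by taking the announced constant to be, say, $2C$. Item 1 follows from the fact that convergence in probability implies almost-sure convergence along some deterministic subsequence $t_n\to\infty$ (equivalently, via Borel--Cantelli along a sufficiently sparse subsequence); then $L_{t_n}/(t_n\log t_n)\to C$ a.s., hence $\liminf_t L_t/(t\log t)\leq C<\infty$ almost surely. The main obstacle is justifying the Darling weak law in the non-i.i.d.\ Markovian setting of successive excursions. I would handle this via a regeneration decomposition based on the exponential mixing of the boundary Markov chain, grouping excursions into genuinely independent blocks and applying the i.i.d.\ Darling theorem to block sums; the truncation-plus-Chebyshev arguments of Gruet for the modular surface transpose to this setting essentially verbatim for the upper-class statements required here.
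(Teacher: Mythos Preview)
Your strategy is essentially the same as the paper's: decompose the path into cusp excursions and thick-part pieces, show the thick-part contribution is $O(t)$ via ergodic theory, show the cusp windings have $1/N$ tails, and apply a Darling-type weak law to the sum. The paper, however, implements this more cleanly in two respects worth noting.

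First, instead of a single boundary $\partial U_i$, the paper uses two nested horodiscs $d_\alpha\subsetneq D_\alpha$ around each cusp and records the winding between the hitting time of $\partial d_\alpha$ and the subsequent exit through $\partial D_\alpha$. By conformal invariance this reduces to Euclidean Brownian motion in an annulus started on the inner circle and stopped on the outer one, for which the winding is \emph{exactly} Cauchy-distributed with parameter $\log(1/r)$, independently of the entry point. Thus the windings $L_{[U_k,V_k]}$ are genuinely i.i.d.\ from the outset, and no It\^o computation, conditioning on the excursion maximum, or regeneration argument is needed. Your Pareto-1 tail is of course consistent with this (Cauchy has $1/N$ tails), but the conformal-invariance shortcut is both simpler and gives the exact law.

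Second, with i.i.d.\ Cauchy windings in hand, the paper can quote directly: for Item~1 the Fristedt--Pruitt result that $\liminf_n \frac{1}{n\log n}\sum_{k\le n}|X_k|=\tfrac{2a}{\pi}$ a.s.\ for i.i.d.\ Cauchy$(a)$ variables, and for Item~2 the classical fact that $\frac{1}{n}\sum_{k\le n}|X_k|-\tfrac{2a}{\pi}\log n$ converges in law to a totally asymmetric Cauchy. The thick-part contribution is handled separately (their Lemma on $L_{[V_k,U_{k+1}]}$) via Birkhoff's theorem for the stationary Markov chain on $\bigcup_\alpha\partial D_\alpha$, exactly as you propose. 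Your route via regeneration blocks would also work, but the two-horodisc trick makes it unnecessary.
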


\begin{rem}
Note that even if we were able to prove that $\liminf\limits_{t\to\infty} \frac{L_t}{t\log t}>0$ in the general case of a hyperbolic non-compact finite volume Riemann surface, we would not know how to deduce from this that $\liminf_{t\to\infty}\frac{\log||Hol_t(\omega)||}{t\log t}>0$. 
\end{rem}

\paragraph{Complex projective structures. }
In the main theorem, there is an hypothesis concerning the developing map associated with $S_0$. This hypothesis relies on the notion of complex projective structures that we explain briefly now.
Let $S$ be a Riemann surface. A \emph{branched complex projective structure} on $S$ is a $(\PSLC,\CPun)$-structure on $S$. Equivalently, it is the data of a couple $(\DD,\rho)$ where $\DD$ is a non-constant holomorphic map $\DD:\tS\rightarrow \CPun$ from the universal cover of $S$ to $\CPun$ and $\rho$ is a morphism  $\rho:\pi_1(S)\rightarrow \PSLC$ satisfying the following equivariance relation:
$$ \forall \gamma \in \pi_1(S),\,  \DD\circ\gamma=\rho(\gamma)\circ\DD. $$
The map $\mathcal{D}$ is called \textit{developing map}, the morphism $\rho$ \textit{monodromy representation}, and the group $\rho(\pi_1(S))$ \textit{monodromy group}.

The main theorem \ref{theoRiccati} is a direct consequence of the following one concerning complex projective structures.

\begin{maintheorem}\label{theoprincipal}
Let $S=\Hyp/\Gamma$ be a hyperbolic Riemann surface of finite volume. Let $(\DD,\rho)$ be a complex projective structure on $S$.
Assume that $\mathcal{D}$ is onto and that the monodromy group $\rho(\Gamma)$ is non-elementary.

Then, for all $x_0\in\Hyp$, for almost every Brownian path $\omega$ starting at $x_0$ (for the Poincaré metric in $\Hyp$), $\mathcal{D}(\omega(t))$ does not have any limit when $t$ goes to $\infty$.

\end{maintheorem}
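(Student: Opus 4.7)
The plan is to argue by contradiction: suppose that for some $x_0\in\Hyp$ the event
$$A=\{\omega:\mathcal{D}(\omega(t))\text{ converges in }\CPun\text{ as }t\to\infty\}$$
has positive probability under $\Prob_{x_0}$. The first step is a zero-one law. The event $A$ is tail-measurable; since $S=\Hyp/\Gamma$ has finite volume the Brownian motion on $S$ is ergodic, and the tail $\sigma$-algebra of Brownian motion on $\Hyp$ is generated (mod null sets) by the exit point $\xi(\omega)\in\partial\Hyp$, so $\Prob_{x_0}(A)\in\{0,1\}$; we assume it equals $1$.

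On $A$ the limit $Z_\infty(\omega):=\lim_t\mathcal{D}(\omega(t))$ is tail-measurable, hence of the form $Z_\infty=F\circ\xi$ for a measurable $F:\partial\Hyp\to\CPun$. The equivariance of $\mathcal{D}$ transfers to $F$, giving a measurable $\rho$-equivariant boundary map satisfying
$$F(\gamma\xi)=\rho(\gamma)F(\xi)\qquad\text{for a.e. }\xi\in\partial\Hyp,\ \gamma\in\Gamma.$$
Non-elementarity immediately rules out $F$ being essentially constant, since an essentially constant equivariant map would give a globally $\rho(\Gamma)$-fixed point.

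The second main ingredient combines the onto hypothesis with recurrence of Brownian motion on $S$. For every non-empty open $V\subset\CPun$, the set $\mathcal{D}^{-1}(V)\subset\Hyp$ is non-empty and open, so its projection to $S$ is a non-empty open set, visited infinitely often by the projected Brownian motion; equivalently, $\mathcal{D}(\omega(t))\in\rho(\Gamma)V$ infinitely often almost surely. If $Z_\infty$ exists, then $Z_\infty\in\overline{\rho(\Gamma)V}$ for every such $V$; taking $V$ over a countable basis, and using that for non-elementary $\rho(\Gamma)$ the limit set $\Lambda(\rho)$ is a non-trivial perfect set on which the $\rho(\Gamma)$-action is minimal, this forces $Z_\infty\in\Lambda(\rho)$ almost surely.

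To reach the contradiction I would exploit the existence, furnished by non-elementarity, of a Schottky pair $g,h\in\rho(\Gamma)$ with pairwise disjoint attracting and repelling fixed points $g^\pm,h^\pm$. For almost every Brownian path, the projected path on $S$ visits tubular neighborhoods of representatives of $g$ and of $h$ infinitely often, so the associated Brownian homotopy word contains both $g$-type and $h$-type excursions of arbitrarily large signed length. Each such excursion drives $\mathcal{D}(\omega(t))$ towards $\rho(\Gamma)$-translates of $g^+$, respectively of $h^+$, along disjoint subsequences of times, which is incompatible with convergence to a single point of $\CPun$. The main obstacle will be precisely this last step: recurrence on $S$ only controls $\mathcal{D}(\omega(t))$ modulo the $\rho(\Gamma)$-action, and upgrading ``$\mathcal{D}(\omega(t))\in\rho(\Gamma)V$ i.o.'' into genuine accumulation on two non-$\rho(\Gamma)$-related subsets of $\CPun$ will require a careful analysis of the Brownian excursions together with the geometry of $\mathcal{D}$ on a fundamental domain, in the same spirit as the study of the homotopic word length $L_t$ used in the proof of Theorem~\ref{theoHolonomie}.
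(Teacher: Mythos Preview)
Your proposal is not a proof but an outline that correctly identifies its own gap, and that gap is real. Steps 1--5 are essentially sound: the event $A$ is shift-invariant, hence (via the Poisson boundary of Brownian motion on $\Hyp$) measurable with respect to the exit point $\xi\in\partial\Hyp$; the $\Gamma$-invariance of $A$ together with ergodicity of $\Gamma$ on $\partial\Hyp$ gives the zero--one law; and equivariance of $\mathcal{D}$ yields a measurable $\rho$-equivariant $F:\partial\Hyp\to\CPun$. The problem is that the mere existence of such an $F$ is \emph{not} a contradiction. For any non-elementary $\rho$ there is always a measurable equivariant map from the Poisson boundary of $(\Gamma,\mu)$ (which is $\partial\Hyp$ for the discretization measure) to $(\CPun,\nu)$, where $\nu$ is the unique $\rho_*\mu$-stationary measure; this is Furstenberg's boundary theory. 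So nothing in steps 1--6 uses the hypothesis that $\mathcal{D}$ is onto in an essential way, and step~6 only locates $Z_\infty$ in $\Lambda(\rho)$, which is exactly where the Furstenberg map already lands. Your step~7 is where the onto hypothesis should enter, and you rightly flag that controlling $\mathcal{D}(\omega(t))$ modulo $\rho(\Gamma)$ is not enough.

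The paper's argument is entirely different and direct, with no contradiction and no boundary map. Using the Furstenberg--Lyons--Sullivan discretization, one writes $\mathcal{D}(\omega)$ as a concatenation of pieces $Y_{N_k}\mathcal{D}(c_k)$, where $Y_{N_k}=\rho(X_{N_k})$ and the $c_k$ are i.i.d.\ portions of Brownian paths in a fixed region of $\Hyp$. The Cartan decomposition of $Y_{N_k}$ gives a south pole $s_k$ and a north pole $n_k$. The crux is a calibrated Borel--Cantelli: one shows that $\mathcal{D}(c_k)$ visits both a disc $D(s_k,r_k)$ with $r_k=e^{-2\lambda' k\log k}$ and the complement of $D(s_k,\|Y_{N_k}\|^{-1})$ for infinitely many $k$. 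Surjectivity of $\mathcal{D}$ is used precisely here, to guarantee that $\mathcal{D}^{-1}(D(s_k,r_k))$ contains a disc of controlled radius in a fixed compact, so the hitting probability is at least $cst/(k\log k)$, whose sum diverges. The choice of $r_k$ is forced by the norm estimate $\tProb(\|Y_{N_k}\|\le e^{\lambda' k\log k})\to 1$, which comes from Proposition~\ref{mainpro} on the homotopic word length; this is exactly where the hyperbolic-monodromy case differs from the parabolic one treated in \cite{Hus}. Under $Y_{N_k}$, the small disc near $s_k$ is sent outside $D(n_k,\tfrac12)$ while the complement of $D(s_k,\|Y_{N_k}\|^{-1})$ is sent inside $\overline{D(n_k,\|Y_{N_k}\|^{-1})}$; hence for infinitely many $k$ the piece $Y_{N_k}\mathcal{D}(c_k)$ oscillates by at least $\tfrac12$ in chordal distance, ruling out convergence. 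Your Schottky heuristic of step~7 is closer in spirit to the proof of Theorem~\ref{theoHolonomie} (where a single hyperbolic puncture is exploited) than to the proof of the present theorem, which never singles out specific group elements but works with the Cartan data of the full product $Y_{N_k}$.
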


\paragraph{Organisation of the paper.} In section \ref{section2}, we prove some precise estimates making the link between the norm of an element of $\PSLC$ and the way this element acts geometrically on $\CPun$. These estimates will be used in section \ref{section4} and \ref{section5} where we prove respectively theorems \ref{theoHolonomie} and \ref{theoprincipal}. Section \ref{section3} is the probabilistic background: we explain the discretization procedure of Furstenberg-Lyons-Sullivan which associates to the Brownian motion on a finite volume Riemann surface, a random walk in its universal cover. In this section, we also prove proposition \ref{mainpro}. Finaly, in section \ref{section6}, we prove that theorem \ref{theoRiccati} is a direct consequence of theorem \ref{theoprincipal}.

\section{The action of $\PSLC$ on $\CPun$.}\label{section2}
In this section, we prove some easy lemmas about the action of an element of $\SLC$ on the Riemann sphere which will be usefull for the proof of theorems \ref{theoHolonomie} and \ref{theoprincipal}.\\

If $\gamma\in \SLC$, $||\gamma||=\underset{|| X ||=1}{sup}|| gX ||$, where $|| X ||$ is the euclidean norm of the vector $X\in\mathbb{C}^2$.
We introduce the chordal distance in $\CPun=\C\cup\{\infty\}$ defined by:
$$d(z_1,z_2)=\frac{|z_1-z_2|}{\sqrt{1+|z_1|^2}\sqrt{1+|z_2|^2}} \text{ if } z_1,z_2\in\C\subset\CPun,$$
$$d(z,\infty)=d\left(\frac{1}{z},0\right)=\frac{1}{\sqrt{1+|z|^2}}.$$
We denote by $D(z,\alpha)$ the open disc centered in $z$ with radius $\alpha$, by $D(z,\alpha)^c$ its complementary set and by $\overline{D(z,\alpha)}$ its closure.
Recall the Cartan decomposition: if $\gamma\in\SLC$, then there exists a triple $(k,k',a)$ where $k,k'$ belong to the special unitary group $SU_2(\mathbb{C})$, $a$ is the diagonal matrix
$a= \begin{pmatrix}
\lambda&0 \\
0&\lambda^{-1}
\end{pmatrix}$ with $|\lambda|\geq 1$ and $\gamma=kak'$. Moreover, the points $s=k'^{-1}(0)$ and $n=k(\infty)$ are uniquely determined by $\gamma$. It can be easily checked that $||\gamma||=|\lambda|$.

\begin{lem}\label{lemmetechnique1}
Let $\gamma\in\SLC$ with $||\gamma||>1$. Then, for every $\alpha\in]0,1[$:
\begin{equation}\label{equainf}
\gamma(D(s,\alpha)^c)=\overline{D(n,\alpha)}\Longleftrightarrow||\gamma||=\frac{\sqrt{1-\alpha^2}}{\alpha}
\end{equation}
and
\begin{equation}\label{equasup}
\gamma\left(\overline{D(s,\alpha)}\right)=D\left(n,\frac{1}{2}\right)^c\Longleftrightarrow\alpha=\sqrt{\frac{3}{3+||\gamma||^4}}.
\end{equation}
Furthermore, if $||\gamma||^4\geq\frac{3}{2}$, then:
\begin{equation}\label{equasup2}
\gamma\left(\overline{D\left(s,\frac{1}{||\gamma||^{2}}\right)}\right)\subset D\left(n,\frac{1}{2}\right)^c
\end{equation}
\end{lem}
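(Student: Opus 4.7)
My plan is to reduce all three assertions to a single computation with the diagonal matrix $a$ by exploiting the Cartan decomposition $\gamma=kak'$ together with the following fundamental fact: the group $SU_2(\C)$ acts on $(\CPun,d)$ by isometries, where $d$ is the chordal distance (this is the standard identification of the chordal metric with the restriction to $S^2\subset\R^3$ of the ambient Euclidean metric). In particular $k$ and $k'$ map every open (resp.\ closed) chordal disc to an open (resp.\ closed) chordal disc of the same radius, and by the very definitions of $n$ and $s$ one has $k'(s)=0$ and $k(\infty)=n$. Hence the problem becomes, in each case, to understand the action of $a$ on discs centered at $0$ or at $\infty$, using that on the affine coordinate $a$ acts as $z\mapsto\lambda^2 z$.

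Granted this, statement \eqref{equainf} reduces to the claim
\[
a\bigl(D(0,\alpha)^c\bigr)=\overline{D(\infty,\alpha)}\Longleftrightarrow ||\gamma||=\frac{\sqrt{1-\alpha^2}}{\alpha}.
\]
Writing the chordal discs in Euclidean coordinates I would compute $D(0,\alpha)=\{|z|<\alpha/\sqrt{1-\alpha^2}\}$ and $\overline{D(\infty,\alpha)}=\{|z|\geq\sqrt{1-\alpha^2}/\alpha\}\cup\{\infty\}$. Since $a$ multiplies moduli by $|\lambda|^2$, the image $a(D(0,\alpha)^c)$ is the set $\{|w|\geq|\lambda|^2\alpha/\sqrt{1-\alpha^2}\}\cup\{\infty\}$, and the desired equality is equivalent to the matching of the two inner radii, which gives $|\lambda|^2=(1-\alpha^2)/\alpha^2$, i.e.\ $||\gamma||=\sqrt{1-\alpha^2}/\alpha$.

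The same reduction handles \eqref{equasup}: from $D(\infty,1/2)^c=\{|z|\leq\sqrt{3}\}$ (since $\sqrt{1-1/4}/(1/2)=\sqrt{3}$) and $a(\overline{D(0,\alpha)})=\{|w|\leq|\lambda|^2\alpha/\sqrt{1-\alpha^2}\}$, the equality sought becomes $|\lambda|^4\alpha^2/(1-\alpha^2)=3$, which solves to $\alpha=\sqrt{3/(3+||\gamma||^4)}$. Inclusion \eqref{equasup2} is then immediate from \eqref{equasup}: the value of $\alpha$ provided by \eqref{equasup} is by monotonicity the \emph{largest} $\alpha$ for which $\gamma(\overline{D(s,\alpha)})\subset D(n,1/2)^c$, so the desired inclusion for $\alpha=1/||\gamma||^{2}$ reduces to checking $1/||\gamma||^{2}\leq\sqrt{3/(3+||\gamma||^4)}$, which upon squaring is exactly $||\gamma||^4\geq 3/2$.

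There is no real obstacle here: once the Cartan decomposition and the isometric action of $SU_2(\C)$ on the chordal sphere have been invoked, the lemma becomes elementary bookkeeping with chordal radii. The only mildly delicate point is keeping track of which discs are open and which are closed, so that the equalities (as opposed to mere inclusions) come out correctly, and of course that $a$ fixes $\{0,\infty\}$, so that the annular complements are preserved as such.
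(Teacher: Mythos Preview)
Your proof is correct and follows essentially the same approach as the paper: reduce via the Cartan decomposition $\gamma=kak'$ and the fact that $k,k'\in SU_2(\C)$ act as chordal isometries, then compute directly with the diagonal matrix $a$ acting on discs centered at $0$ and $\infty$. The paper carries out the boundary computation $a(\partial D(0,\alpha))=\partial D(\infty,\alpha)$ in slightly different coordinates, but the substance is identical, and your derivation of \eqref{equasup2} from \eqref{equasup} via the monotonicity inequality $1/||\gamma||^2\leq\sqrt{3/(3+||\gamma||^4)}\Longleftrightarrow||\gamma||^4\geq 3/2$ matches the paper exactly.
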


\begin{proof}
Write the Cartan decomposition $\gamma=kak'$. The matrix $k$ and $k'$ belonging to $SU_2(\mathbb{C})$, they are isometries for the chordal distance $d$. We deduce that:
\begin{align*}
\gamma(D(s,\alpha)^c)=\overline{D(n,\alpha)}&\Longleftrightarrow a(D(0,\alpha)^c)=\overline{D(\infty,\alpha)}\\
&\Longleftrightarrow a(\partial D(0,\alpha))=\partial D(\infty,\alpha).
\end{align*}
But, we have 
\begin{equation}\label{equationn}
z\in\partial D(0,\alpha)\Longleftrightarrow d(z,0)^2=\alpha^2\Longleftrightarrow \frac{|z|^2}{1+|z|^2}=\alpha^2\Longleftrightarrow|z|^2=\frac{\alpha^2}{1-\alpha^2}
\end{equation}
and
$$a(z)\in\partial D(\infty,\alpha)\Longleftrightarrow d(a(z),\infty)^2=\alpha^2\Longleftrightarrow \frac{1}{1+|\lambda|^4|z|^2}=\alpha^2$$
Using the fact that $||\gamma||=|\lambda|$, a direct calculation gives \eqref{equainf}. A similar reasonning gives \eqref{equasup}. In order to prove \eqref{equasup2}, note that $||\gamma||^4\geq\frac{3}{2}\Longleftrightarrow\frac{1}{||\gamma||^2}\leq\sqrt{\frac{3}{3+||\gamma||^4}}$. So, if $||\gamma||^4\geq\frac{3}{2}$:
$$\gamma\left(\overline{D\left(s,\frac{1}{||\gamma||^2}\right)}\right)\subset\gamma\left(\overline{D(s,\sqrt{\frac{3}{3+||\gamma||^4}})}\right)=D\left(n,\frac{1}{2}\right)^c.$$
\end{proof}

\begin{lem}\label{lemmetechnique2}
Let $\gamma\in\SLC$ with $||\gamma||>1$. 
If there exist $y,z\in\CPun$ and $\alpha\in]0,\frac{1}{3}[$ such that $\gamma(D(y,\alpha)^c)\subset\overline{D(z,\alpha)}$ then:
$$||\gamma||\geq \frac{\sqrt{1-9\alpha^2}}{3\alpha}$$
\end{lem}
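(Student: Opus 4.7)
The plan is a proof by contradiction. Assume $||\gamma||<\frac{\sqrt{1-9\alpha^2}}{3\alpha}$; this is equivalent to $\alpha_0:=\frac{1}{\sqrt{1+||\gamma||^2}}>3\alpha$. Here $\alpha_0$ is exactly the critical radius from Lemma~\ref{lemmetechnique1}(1) for which $\gamma(D(s,\alpha_0)^c)=\overline{D(n,\alpha_0)}$. I would aim to contradict the hypothesis $\gamma(D(y,\alpha)^c)\subset\overline{D(z,\alpha)}$.

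The one preliminary, which I would establish by repeating the Cartan-decomposition computation used to prove Lemma~\ref{lemmetechnique1}(1) but for an arbitrary radius, is the identity
$$\gamma(D(s,\beta)^c)=\overline{D(n,r(\beta))},\qquad r(\beta)^2=\frac{1-\beta^2}{1-\beta^2+\beta^2\,||\gamma||^4},$$
valid for every $\beta\in(0,1)$. Writing $\gamma=kak'$ with $a=\operatorname{diag}(\lambda,\lambda^{-1})$, it reduces to applying the dilation $a$ to the planar exterior disk $\{|w|\geq\beta/\sqrt{1-\beta^2}\}\cup\{\infty\}$. Taking set-complements on both sides and using the antipode identity $d(w,n)^2+d(w,n^*)^2=1$, where $n^*$ denotes the chordal antipode of $n$, rewrites this as $\gamma(D(s,\beta))=D(n^*,\sqrt{1-r(\beta)^2})$.

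The main argument has two steps. First I would show $d(y,s)>2\alpha$. If $D(s,\alpha_0)^c\subset D(y,\alpha)^c$ held, applying $\gamma$ and the hypothesis would force $\overline{D(n,\alpha_0)}\subset\overline{D(z,\alpha)}$, hence $\alpha_0\leq\alpha$ by the elementary fact that a chordal closed disk cannot be contained in a strictly smaller one. This contradicts $\alpha_0>3\alpha$, so some $w$ satisfies $d(w,s)\geq\alpha_0$ and $d(w,y)<\alpha$, giving $d(y,s)\geq d(w,s)-d(w,y)>\alpha_0-\alpha>2\alpha$. Setting $\beta:=d(y,s)-\alpha>\alpha$, the triangle inequality yields $D(s,\beta)\subset D(y,\alpha)^c$, so the hypothesis combined with the preliminary identity gives $D(n^*,\sqrt{1-r(\beta)^2})\subset\overline{D(z,\alpha)}$.

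Applying cap inclusion once more, $\sqrt{1-r(\beta)^2}\leq\alpha$, i.e.\ $r(\beta)^2\geq 1-\alpha^2$. Substituting the explicit formula for $r(\beta)$ and rearranging yields $||\gamma||^4\leq\frac{(1-\beta^2)\alpha^2}{(1-\alpha^2)\beta^2}$; combined with $||\gamma||>1$, this forces $\alpha^2>\beta^2$, contradicting $\beta>\alpha$. The only nontrivial ingredient, invoked twice, is the cap-inclusion comparison $\overline{D(p_1,r_1)}\subset\overline{D(p_2,r_2)}\Rightarrow r_1\leq r_2$, which I would justify by comparing the chordal diameters $2r_i\sqrt{1-r_i^2}$ in the sub-hemispheric case $r_i\leq 1/\sqrt{2}$, and reducing the opposite case through the antipode identity. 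This is the main place where bookkeeping is needed; everything else is algebraic manipulation inside the formulas already extracted from Lemma~\ref{lemmetechnique1}.
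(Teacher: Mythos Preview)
Your proof is correct and rests on the same core idea as the paper's: reduce to the Cartan picture by comparing $y$ with the south pole $s$, then use that $\gamma$ sends disks centered at $s$ to disks centered at $n$ together with a ``cap inclusion $\Rightarrow$ radius comparison'' step. The execution, however, is different in two places. First, the paper argues directly rather than by contradiction: it shows $D(y,\alpha)\cap D(s,\alpha)\neq\varnothing$ (without assuming anything on $||\gamma||$) by observing that otherwise $\gamma(D(s,\alpha))$ would contain a chordal disk of radius strictly larger than $\alpha$, contradicting $\gamma(D(s,\alpha))\subset\overline{D(z,\alpha)}$. This immediately gives $D(y,\alpha)\subset D(s,3\alpha)$. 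Second, the paper then works with the \emph{exterior} $D(s,3\alpha)^c$: since $\gamma(D(s,3\alpha)^c)=\overline{D(n,\epsilon)}\subset\overline{D(z,\alpha)}$, one gets $\epsilon\le\alpha\le 3\alpha$ and the bound follows from Lemma~\ref{lemmetechnique1} without ever needing the explicit formula for $r(\beta)$ or the antipode identity. Your route via the \emph{interior} disk $D(s,\beta)$ and the antipode $n^*$ is perfectly valid but requires a bit more machinery; the paper's version is shorter and avoids the algebraic manipulation of $r(\beta)$.
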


\begin{proof}
We still denote by $s$ (resp. $n$) the south pole (resp. north pole) of $\gamma$ obtained with Cartan decomposition.

Firstly, we claim that $D(y,\alpha)\cap D(s,\alpha)\neq\varnothing$. Indeed, assume on the contrary that $D(s,\alpha)\subset D(y,\alpha)^c$. Then, on the one hand, we have:
\begin{equation}
\label{equationn0}
\gamma(D(s,\alpha))\subset\gamma(D(y,\alpha)^c)\subset \overline{D(z,\alpha)}.
\end{equation}
And on the other hand, we have:
\begin{equation}\label{equationn1}
\gamma(D(s,\alpha))=kak'(D(s,\alpha))=ka(D(0,\alpha)).
\end{equation}
As $|\lambda|=||\gamma||>1$, there exists $\beta>1$ such that:
\begin{equation}\label{equationn2}
a(D(0,\alpha))\supset D(0,\beta\alpha).
\end{equation}
From \eqref{equationn1} and \eqref{equationn2}, we deduce:
$$\gamma(D(s,\alpha))\supset k(D(0,\beta\alpha))=D(k0,\beta\alpha).$$
Hence, using \eqref{equationn0}, we deduce that $D(k0,\beta\alpha)\subset \overline{D(z,\alpha)}$ which is clearly absurd since $\beta>1$.\newline

So we have $D(y,\alpha)\subset D(s,3\alpha)$. By the hypothesis of the lemma, this implies that $\gamma(D(s,3\alpha)^c)\subset \gamma(D(y,\alpha)^c)\subset \overline{D(z,\alpha)}$. As $\gamma(D(s,3\alpha)^c)=\overline{D(n,\epsilon)}$ for some $\epsilon>0$, we deduce that $\overline{D(n,\epsilon)}\subset \overline{D(z,\alpha)}$, hence $\epsilon\leq\alpha$. So $\gamma(D(s,3\alpha)^c)\subset \overline{D(n,\alpha)}\subset \overline{D(n,3\alpha)}$. So, according to lemma \ref{lemmetechnique1}, we have $||\gamma||\geq \frac{\sqrt{1-9\alpha^2}}{3\alpha}$.
\end{proof}

If $\gamma$ is a hyperbolic element of $\SLC$, we denote by $\lambda$ the eigenvalue with $|\lambda|>1$ and let $y_0$ (resp. $z_0$) be the repulsive (resp. attractive) fixed point of $\gamma$. We have:
\begin{lem}\label{lemmetechnique3}
 For every hyperbolic element $\gamma\in\SLC$, there exists $C_1>0$ such that for all $n\in\mathbb{N}$:
$$\gamma^n(D(y_0,C_1|\lambda|^{-n})^c)\subset \overline{D(z_0,C_1|\lambda|^{-n})}.$$
\end{lem}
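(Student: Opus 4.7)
The idea is to reduce the statement to a diagonal model by conjugation, apply Lemma \ref{lemmetechnique1} to the diagonal powers, and transfer the resulting disc inclusions back via the Lipschitz regularity of the conjugating biholomorphism of $\CPun$.

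First, since $\gamma$ is hyperbolic, I would diagonalize it as $\gamma=PaP^{-1}$ with $a=\mathrm{diag}(\lambda,\lambda^{-1})$ and $P\in\SLC$, so that $\gamma^n=Pa^nP^{-1}$ and $||a^n||=|\lambda|^n$. The action of $a$ on $\CPun$ being $z\mapsto\lambda^2 z$, its repulsive fixed point is $0$ and its attractive one is $\infty$; hence $y_0=P(0)$ and $z_0=P(\infty)$. Since $P$ is a biholomorphism of the Riemann sphere, both $P$ and $P^{-1}$ are Lipschitz for the chordal distance; let $K\geq 1$ be a common bi-Lipschitz constant depending only on $\gamma$. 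The $K$-Lipschitz property translates into the two disc inclusions
$$P^{-1}\bigl(D(y_0,Kr)^c\bigr)\subset D(0,r)^c \quad\text{and}\quad P\bigl(\overline{D(\infty,r)}\bigr)\subset\overline{D(z_0,Kr)},$$
valid for every $r>0$.

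I would then set $C_1:=K$ and chain three steps. For $n\geq 1$, Lemma \ref{lemmetechnique1} applied to $a^n$ (whose south and north poles are $0$ and $\infty$) with $\beta_n:=1/\sqrt{1+|\lambda|^{2n}}$ gives $a^n(D(0,\beta_n)^c)=\overline{D(\infty,\beta_n)}$, since $||a^n||=\sqrt{1-\beta_n^2}/\beta_n$ is automatic; note also $\beta_n\leq|\lambda|^{-n}$. Starting from $D(y_0,C_1|\lambda|^{-n})^c$, the first inclusion above, applied with $Kr=C_1|\lambda|^{-n}$ (so $r=|\lambda|^{-n}$), places us inside $D(0,|\lambda|^{-n})^c\subset D(0,\beta_n)^c$; applying $a^n$ sends this into $\overline{D(\infty,\beta_n)}\subset\overline{D(\infty,|\lambda|^{-n})}$; and the second inclusion finally sends that into $\overline{D(z_0,K|\lambda|^{-n})}=\overline{D(z_0,C_1|\lambda|^{-n})}$, as wanted. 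The case $n=0$ is trivial since $K\geq 1$ makes $D(y_0,C_1)^c$ empty.

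The argument is essentially bookkeeping; the only slightly delicate points are (i) making sure the bi-Lipschitz inclusions go in the correct direction when one passes to complements of discs, and (ii) checking that $|\lambda|^{-n}\geq\beta_n$ holds automatically, so that Lemma \ref{lemmetechnique1} can be invoked monotonely without any case split on $n$.
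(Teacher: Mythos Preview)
Your proof is correct and follows essentially the same route as the paper's: diagonalize $\gamma$, use Lemma~\ref{lemmetechnique1} on the diagonal powers $a^n$, and pull the disc inclusions back via the Lipschitz constant of the conjugating M\"obius map. The only cosmetic differences are that the paper writes $\gamma=P^{-1}aP$ instead of $\gamma=PaP^{-1}$, takes $C_1$ to be the Lipschitz constant of $P^{-1}$ directly (which, as you implicitly observe, is all that is actually needed), and invokes Lemma~\ref{lemmetechnique1} via the inequality $\lvert\lambda\rvert^n\geq\sqrt{1-\lvert\lambda\rvert^{-2n}}/\lvert\lambda\rvert^{-n}$ together with implicit monotonicity rather than introducing your auxiliary $\beta_n$; your version spells out this monotonicity step and the trivial case $n=0$ more carefully.
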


\begin{proof}
Write $\gamma=P^{-1}aP$ with $P\in GL(2,\mathbb{C})$ and 
$a= \begin{pmatrix}
\lambda&0 \\
0&\lambda^{-1}
\end{pmatrix}$, $|\lambda|>1$.

$||a^n||=|\lambda|^n\geq \frac{\sqrt{1-|\lambda|^{-2n}}}{|\lambda|^{-n}}$. Hence according to lemma \ref{lemmetechnique1}:
$$a^n(D(0,|\lambda|^{-n})^c\subset \overline{D(\infty,|\lambda|^{-n})}.$$
Let $C_1=\sup_{x,y\in\CPun}\frac{d(P^{-1}(x),P^{-1}(y))}{d(x,y)}$. We have:
\begin{align*}
\gamma^n(D(y_0,C_1|\lambda|^{-n})^c)&=P^{-1}a^nP(D(y_0,C_1|\lambda|^{-n})^c)\\
&\subset P^{-1}a^n(D(0,|\lambda|^{-n})^c)\\
&\subset P^{-1}\overline{D(\infty,|\lambda|^{-n})}\\
&\subset \overline{D(z_0,C_1|\lambda|^{-n})}.
\end{align*}

\end{proof}

\begin{lem}\label{lemmetechnique4}
There is a universal constant $C_2$ such that for all $\gamma\in\SLC$ with $||\gamma||>1$, the restriction of $\gamma$ to $D(s,||\gamma||^{-1})^c$ is $C_2$-Lipschitz for the chordal distance.
\end{lem}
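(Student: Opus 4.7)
The plan is to reduce via the Cartan decomposition to the case of a diagonal matrix and then carry out a direct chordal computation, exactly in the style of Lemma \ref{lemmetechnique1}. Write $\gamma = kak'$ with $k, k' \in SU_2(\mathbb{C})$ and $a = \mathrm{diag}(\lambda, \lambda^{-1})$, $|\lambda| = \|\gamma\|$. Since $k$ and $k'$ are chordal isometries and $k'(s) = 0$ by the characterization of $s$, one has $k'(D(s,\|\gamma\|^{-1})^c) = D(0,\|\gamma\|^{-1})^c$. Composing with $k$ on the left preserves chordal distances, so it is enough to bound the Lipschitz constant of $a$ on $D(0,\|\gamma\|^{-1})^c$.

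For $z_1,z_2\in\C$, a direct calculation using $a(z)=\lambda^2 z$ gives
$$
\frac{d(a z_1, a z_2)}{d(z_1,z_2)} = \sqrt{\frac{|\lambda|^2(1+|z_1|^2)}{1+|\lambda|^4|z_1|^2}}\cdot \sqrt{\frac{|\lambda|^2(1+|z_2|^2)}{1+|\lambda|^4|z_2|^2}},
$$
and an analogous formula handles the case $z_2=\infty$ (it just removes one of the square roots). The key step is to bound the factor $r(u) = \frac{|\lambda|^2(1+u)}{1+|\lambda|^4 u}$ for $u = |z|^2$. A one-line derivative computation shows that $r$ is decreasing in $u$, since $|\lambda|>1$; hence $r$ is maximized on $D(0,\|\gamma\|^{-1})^c$ at the boundary point $u = 1/(|\lambda|^2-1)$ (using the formula \eqref{equationn} from Lemma \ref{lemmetechnique1}). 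A quick simplification yields $r(1/(|\lambda|^2-1)) = |\lambda|^4/(|\lambda|^4 + |\lambda|^2 -1) < 1$.

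Putting these together, each factor on the right-hand side is bounded by $1$, hence so is their product, and $a$ is in fact $1$-Lipschitz on $D(0,\|\gamma\|^{-1})^c$. Composing with the isometries $k$ and $k'$, we conclude that $\gamma$ is $C_2$-Lipschitz on $D(s,\|\gamma\|^{-1})^c$ with the universal constant $C_2 = 1$ (any $C_2\geq 1$ will of course do). There is no conceptual obstacle here; the only care needed is to check that the infinitesimal reasoning I just used gives the correct \emph{global} Lipschitz bound for the chordal distance — and this is exactly why I preferred to compute the ratio $d(az_1,az_2)/d(z_1,z_2)$ directly rather than integrate an infinitesimal metric, since the chordal distance is not itself a Riemannian distance but only uniformly comparable to the spherical one.
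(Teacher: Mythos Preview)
Your proof is correct and follows the same skeleton as the paper's: reduce via the Cartan decomposition to the diagonal matrix $a$ acting on the complement of a small disc about $0$, and estimate there. The only difference is in how the diagonal case is handled. The paper bounds the \emph{spherical} derivative $|a'(z)|_{sph}=\frac{|\lambda|^2(1+|z|^2)}{1+|\lambda|^4|z|^2}\leq 1$ on $\{|z|\geq|\lambda|^{-1}\}$, concludes that $a$ is a contraction for the spherical distance, and then invokes the equivalence of the spherical and chordal distances to obtain some universal $C_2$. You instead compute the chordal ratio $d(az_1,az_2)/d(z_1,z_2)$ directly and show each factor $r(|z_i|^2)$ is at most $1$, yielding $C_2=1$.

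Your variant has two small advantages worth recording. First, it gives the sharper constant $C_2=1$ rather than an implicit constant coming from the spherical/chordal comparison. Second, it neatly sidesteps a point the paper glosses over: passing from the pointwise bound $|a'|_{sph}\leq 1$ to a global $1$-Lipschitz bound for the spherical distance tacitly requires that spherical geodesics between points of the region stay in the region where the derivative bound holds, which is not immediate since the complement of a small cap is not geodesically convex on the sphere. Your direct two-point chordal computation avoids this issue entirely, which is exactly what you point out in your final paragraph.
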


\begin{proof}
Let us write again the Cartan decomposition of $\gamma$: 
$\gamma=k_1ak_2$ with $k_1,k_2\in SU_2(\mathbb{C})$ and 
$a= \begin{pmatrix}
\lambda&0 \\
0&\lambda^{-1}
\end{pmatrix}$
The modulus of the derivative of $a$ in $z$ with respect to the spherical metric $\frac{|dz|}{1+|z|^2}$ writes:
$$|a'(z)|_{sph}=\frac{|a'(z)|_{eucl}\cdot (1+|z|^2)}{1+|a(z)|^2}=\frac{|\lambda|^2(1+|z|^2)}{1+|\lambda|^4|z|^2}.$$
If $z\in D(0,||\gamma||^{-1})^c$, then $|z|\geq||\gamma||^{-1}=|\lambda|^{-1}$ which is equivalent to:
$$|a'(z)|_{sph}\leq 1.$$
For $i=1,2$, we have $k_i\in SU_2(\mathbb{C})$, hence $|k_i'(z)|_{sph}=1$ for all $z\in\CPun$. Moreover, recall that $k_2(D(s,|\lambda|^{-1})^c)=D(0,|\lambda|^{-1})^c$ and $|\lambda|=||\gamma||$. We deduce that $|\gamma'(z)|_{sph}\leq 1$ for all $z\in D(s,|\lambda|^{-1})^c)$. Hence the restriction of $\gamma$ to $D(s,||\gamma||^{-1})^c$ is a contraction for the spherical distance associated with the spherical metric. The result follows from the fact that the chordal and spherical distances are equivalent.
\end{proof}

\section{The probabilistic background.}\label{section3}

\paragraph{Brownian motion. }
Let $M$ be a Riemannian manifold with complete metric. The Brownian motion on $M$ is defined as the Markov process with transition probability $p_t(x,y)dvol(y)$ where $dvol(y)$ is the volume element and the heat kernel satisfies:
\begin{itemize}
\item $\frac{\partial}{\partial t}p_t(x,\cdot)=\Delta p_t(x,\cdot)$
\item $\lim\limits_{t\to 0}p_t(x,.)=\delta_x$
\end{itemize}
This process is a diffusion with continuous sample paths. So it gives rise, for every $x\in M$, to a probability measure $\mathbb{P}_x$ (called the Wiener measure) on the set $\Omega_x$ of all continuous paths $\omega:[0,\infty[\to M$ such that $\omega(0)=x$. When $\nu$ is a measure on $M$, we denote by $P_{\nu}:=\displaystyle\int_M \Prob_x d\nu(x)$ the measure on the set $\Omega$ of all the continuous paths $\omega:[0,\infty[\to M$.

\paragraph{Discretization. }
In the most general context, this procedure associates to the Brownian motion in a Riemannian manifold $(M,g)$ a Markov chain in a discret $*$-recurrent set $E\subset M$ with time homogeneous transition probabilities \cite{LS}. Here, we explain the discretization in the case where $M=\Hyp$ is the universal covering space of a finite volume hyperbolic Riemann surface $S=\Hyp/\Gamma$ and $E=\Gamma\cdot x_0$ is the orbit of a base point $x_0$. We follow the presentation of \cite{KL}.\\

The fundamental group $\Gamma$ of $S$ acts on $\Hyp$ by isometry for the Poincar\'{e} metric of $\Hyp$. Fix a base point $x_0\in\Hyp$. For all $X \in\Gamma$, we define $F_X=X.\overline{D_{\Hyp}(x_0,\delta)}$ and $V_X=X.\overline{D_{\Hyp}(x_0,\delta')}$ with $\delta<\delta'$ and $\delta'$ small enough so that the $V_X$ are pairwise disjoints. Let $(\Omega_x,\mathbb{P}_x)$ be the set of Brownian paths starting at $x$ in $\Hyp$ with the Wiener measure associated with the Poincar\'{e} metric. $\underset{X \in \Gamma}{\bigcup}F_X$ is a recurrent set for the Brownian motion (because $S$ has finite volume). Let $X\in\Gamma$. For $x\in F_X$, consider $\varepsilon_x^{\partial V_X}$ the hitting measure of $\partial V_X$ for a Brownian motion starting at $x$. The Harnack constant $C_X$ of the couple $(F_X,V_X)$ is defined by:
\begin{equation}\label{C}
C_X=\sup \left\{\frac{d\varepsilon_x^{\partial V_X}}{d\varepsilon_y^{\partial V_X}}(z); x,y \in F_X, z\in\partial V_X\right\}
\end{equation}
where $\frac{d\varepsilon_x^{\partial V_X}}{d\varepsilon_y^{\partial V_X}}$ is the Radon-Nikodym derivative. Notice that, as elements of $\Gamma$ act isometrically on $\Hyp$, the Harnack constant of $(F_X,V_X)$ does not depend on $X \in \Gamma$ (i.e. there is a constant $C$ such that for all $X\in\Gamma$, $C_X=C$).\\

If $\omega \in \Omega_{x_0}$, we define recursively:
\[S_0(\omega)=\inf\left\{t\geq0 ; \omega(t)\in \partial V_{Id}\right\}\]
and, for $n\geq1$:
\[R_n(\omega)=\inf\left\{t\geq S_{n-1}(\omega) ; \omega(t)\in \cup F_X \right\}\]
\[S_n(\omega)=\inf\left\{t\geq R_{n}(\omega) ; \omega(t)\in \cup \partial V_X \right\}\]
We also define $X_n(\omega)$ by:
 \[X_0(\omega)=Id \text{ and } w(R_n(\omega)) \in F_{X_n(\omega)} \text{ for }n\geq 1\]
\begin{equation}\label{kappa}
\kappa_n(\omega)=\frac{1}{C}\left(\frac{d\epsilon_{X_n(\omega).0}^{\partial V_{X_n(\omega)}}}{d\epsilon_{\omega(R_n(\omega))}^{\partial V_{X_n(\omega)}}}(\omega(S_n(\omega)))\right)
\end{equation}
By definition of $C$ and $\kappa_n$, note that: 
\begin{equation}\label{kappa}
\frac{1}{C^2}\leq \kappa_n\leq 1.
\end{equation}

Now, define $(\Omega_{x_0}\times [0,1]^{\mathbb{N}},\mathbb{P}_{x_0}\otimes leb^{\otimes \mathbb{N}})=(\widetilde{\Omega},\widetilde{\mathbb{P}})$. Let
\[\begin{array}{ccccc}
N_k: & \widetilde{\Omega} & \longrightarrow & \mathbb{N}\\
 & (\omega,\alpha)=(\omega,(\alpha_n)_{n\in \mathbb{N}})=\widetilde{\omega} & \longmapsto &  N_k(\widetilde{\omega}) \\
\end{array}\]
 be the random variable defined recursively by:
\[N_0(\widetilde{\omega})=0\]
\[N_k(\omega,\alpha)=\inf\left\{n>N_{k-1}(\omega,\alpha); \alpha_n<\kappa_n(\omega)\right\}\]
The following theorem is stated in \cite{LS} in the cocompact case but it is observed in \cite[Proposition 4]{K} that it is also valid in the general set-up:

\begin{teo}\cite[Theorem 6]{LS}\label{theodiscretisation}
The distribution law of $X_{N_1}$ defines a probability measure $\mu$ on $\Gamma$ which satisfies for any Borel set $A$ in $\Hyp$:
\[\tilde{\mathbb{P}}(X_{N_1}=Y_1;\cdots;X_{N_k}=Y_k;\omega(S_{N_k})\in A)=\mu(Y_1) \mu(Y_1^{-1}Y_2)\cdots\mu(Y_{k-1}^{-1}Y_k)\varepsilon_{Y_k\cdot x_0}^{\partial V_{Y_k}}(A)\]
\end{teo}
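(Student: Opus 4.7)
The plan is to prove Theorem \ref{theodiscretisation} by induction on $k$, with the base case resting on a key conditional identity showing that the acceptance mechanism $\alpha_n<\kappa_n(\omega)$ \emph{resamples} the exit point $\omega(S_n)$ according to the canonical hitting measure $\varepsilon_{X_n\cdot x_0}^{\partial V_{X_n}}$, independently of the entry point $\omega(R_n)$. Concretely, let $\mathcal{F}_{R_n}$ be the $\sigma$-algebra generated by the Brownian path up to $R_n$ together with $\alpha_1,\dots,\alpha_{n-1}$. Conditioned on $X_n=Y$ and $\omega(R_n)=y\in F_Y$, the strong Markov property gives that $\omega(S_n)$ has law $\varepsilon_y^{\partial V_Y}$; since $\alpha_n\sim\mathrm{Unif}[0,1]$ is independent of $\omega$, the definition \eqref{kappa} of $\kappa_n$ yields
$$\widetilde{\mathbb{P}}\bigl(\omega(S_n)\in A,\ \alpha_n<\kappa_n(\omega)\ \big|\ \mathcal{F}_{R_n},X_n=Y,\omega(R_n)=y\bigr)=\int_A\frac{1}{C}\,\frac{d\varepsilon_{Y\cdot x_0}^{\partial V_Y}}{d\varepsilon_y^{\partial V_Y}}(z)\,d\varepsilon_y^{\partial V_Y}(z)=\frac{1}{C}\,\varepsilon_{Y\cdot x_0}^{\partial V_Y}(A),$$
which is independent of $y$. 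This is the ``uniform selection'' principle underlying the Furstenberg--Lyons--Sullivan construction.

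For the base case $k=1$, I would decompose according to the value of $N_1$. The event $\{N_1=n\}$ equals $\{\alpha_m\geq\kappa_m\text{ for }1\leq m<n,\ \alpha_n<\kappa_n\}$; iterating the key identity together with the tower property and integrating out the $\alpha_m$ (which are independent uniforms) gives
$$\widetilde{\mathbb{P}}(N_1=n,\,X_n=Y_1,\,\omega(S_n)\in A)=\frac{1}{C}\,\varepsilon_{Y_1\cdot x_0}^{\partial V_{Y_1}}(A)\cdot\mathbb{E}\Bigl[\mathbf{1}_{\{X_n=Y_1\}}\prod_{m=1}^{n-1}(1-\kappa_m(\omega))\Bigr].$$
Summing over $n\geq 1$ and defining $\mu(Y_1)$ to be the resulting coefficient of $\varepsilon_{Y_1\cdot x_0}^{\partial V_{Y_1}}(A)$ proves the $k=1$ case; taking $A=\Hyp$ shows $\mu(Y_1)=\widetilde{\mathbb{P}}(X_{N_1}=Y_1)$. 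Recurrence of $\bigcup_{X\in\Gamma}F_X$ (finite volume of $S$) together with the uniform lower bound $\kappa_n\geq 1/C^2$ forces $\widetilde{\mathbb{P}}(N_1>n)\leq(1-1/C^2)^n\to 0$, so $N_1<\infty$ almost surely and $\mu$ is a genuine probability measure on $\Gamma$.

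For the inductive step $k\mapsto k+1$, the crucial observation is that the conditional law of $\omega(S_{N_1})$ given $X_{N_1}=Y_1$ is $\varepsilon_{Y_1\cdot x_0}^{\partial V_{Y_1}}$, which is precisely the exit distribution on $\partial V_{Y_1}$ of a Brownian motion launched from $Y_1\cdot x_0$. Applying the strong Markov property at $S_{N_1}$, and then using this identification in reverse, the shifted path $\omega(S_{N_1}+\cdot)$ has the same joint law as the tail, after first exiting $V_{Y_1}$, of a Brownian motion issued from $Y_1\cdot x_0$. The $\Gamma$-invariance of the Poincar\'e metric together with the equivariance $F_{Y_1 X}=Y_1\cdot F_X$ and $V_{Y_1 X}=Y_1\cdot V_X$ allows one to push everything back by $Y_1^{-1}$, recovering an independent copy of the original discretization procedure from $x_0$, now driven by the fresh independent uniforms $(\alpha_{N_1+j})_{j\geq 1}$. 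Applying the induction hypothesis to this copy, and reinstating the translation by $Y_1$, produces the claimed product formula.

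The main obstacle will be the bookkeeping in the inductive step: one must check carefully that ``the process past $S_{N_1}$, given $X_{N_1}=Y_1$ and $\omega(S_{N_1})\in\partial V_{Y_1}$'' can legitimately be reinterpreted as a fresh instance of the discretization procedure after $\Gamma$-translation, and that the Harnack constant and all the hitting measures $\varepsilon_{Y_1 X\cdot x_0}^{\partial V_{Y_1 X}}$ correctly transfer under this translation. The coincidence making this work is precisely the base-case identification of the conditional law of $\omega(S_{N_1})$ with the exit law from $V_{Y_1}$ of a Brownian motion started at the centre $Y_1\cdot x_0$.
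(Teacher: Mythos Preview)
The paper does not actually prove this theorem: it is stated as a quotation from \cite[Theorem 6]{LS}, with the remark that Kaimanovich \cite[Proposition 4]{K} observed its validity beyond the cocompact case. There is therefore no ``paper's own proof'' to compare against; your proposal is a self-contained reconstruction of the Lyons--Sullivan argument.

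That said, your outline is correct and is essentially the standard proof. The crux is exactly the identity you isolate: conditioning on $\mathcal{F}_{R_n}$ and $\omega(R_n)=y\in F_Y$, one has
\[
\widetilde{\mathbb{P}}\bigl(\omega(S_n)\in A,\ \alpha_n<\kappa_n\bigr)=\int_A\kappa_n\,d\varepsilon_y^{\partial V_Y}=\tfrac{1}{C}\,\varepsilon_{Y\cdot x_0}^{\partial V_Y}(A),
\]
which is independent of the entry point $y$. Your base-case computation then follows by conditioning on $\mathcal{F}_{R_n}$ (noting that $X_n$ and $\kappa_1,\dots,\kappa_{n-1}$ are all $\mathcal{F}_{R_n}$-measurable) and integrating out the independent uniforms; the bound $\widetilde{\mathbb{P}}(N_1>n)\le(1-1/C^2)^n$ is also correct. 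The inductive step via strong Markov at $S_{N_1}$ and $\Gamma$-equivariance is the right idea, and the ``coincidence'' you identify---that the conditional exit law at $S_{N_1}$ is exactly that of a Brownian motion started at the centre $Y_1\cdot x_0$---is indeed what makes the induction close.

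One point worth tightening if you write this out in full: $S_{N_1}$ is a stopping time not for the Brownian filtration alone but for the enlarged filtration incorporating the coins $\alpha_1,\alpha_2,\dots$ as they become relevant. The cleanest way to handle this is to work with the discrete-time Markov chain $\bigl(\omega(R_n),\omega(S_n)\bigr)_{n\ge 1}$ augmented by the i.i.d.\ sequence $(\alpha_n)$, for which $N_1$ is an honest stopping time and the strong Markov property applies directly. This is implicit in your argument but deserves a sentence.
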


\begin{cor}\cite{LS}
$(X_{N_k})_{k\in \mathbb{N}}$ is the realisation of a right random walk in $\Gamma$ with law $\mu$, in other words  $(\gamma_{N_k}:=X_{N_{k-1}}^{-1}X_{N_k})_{k\in \mathbb{N}^*}$ is a sequence of independent, identically distributed random variables with law $\mu$.
\end{cor}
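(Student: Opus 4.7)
The plan is to iterate a single-step rejection-sampling identity and then invoke the strong Markov property of Brownian motion together with the $\Gamma$-equivariance of the Poincar\'e metric on $\Hyp$ to recover the random-walk structure. The multiplier $\kappa_n$ has been tailored precisely so that the event $\{\alpha_n<\kappa_n(\omega)\}$ forces the conditional law of $\omega(S_n)$ on $\partial V_{X_n}$ to be the harmonic measure $\varepsilon_{X_n\cdot x_0}^{\partial V_{X_n}}$ from the centre of $V_{X_n}$, regardless of where the Brownian path entered $F_{X_n}$ at time $R_n$.

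Concretely, I would first establish the following single-step identity. Condition on the Brownian trajectory up to time $R_n$ and set $X:=X_n(\omega)$. The strong Markov property at $R_n$ gives $\omega(S_n)\sim\varepsilon_{\omega(R_n)}^{\partial V_X}$; combining this with the independence of $\alpha_n$ from $\omega$ and the very definition of $\kappa_n$, one computes
$$\tilde{\mathbb{E}}\!\left[f(\omega(S_n))\,\mathbf{1}_{\alpha_n<\kappa_n(\omega)}\,\bigm|\,\mathcal{F}_{R_n}\right]=\int f(z)\,\kappa_n(z)\,d\varepsilon_{\omega(R_n)}^{\partial V_X}(z)=\frac{1}{C}\int f\,d\varepsilon_{X\cdot x_0}^{\partial V_X}$$
for every bounded Borel $f$. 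Setting $f\equiv 1$ identifies $1/C$ as the conditional acceptance probability, and dividing shows that on the acceptance event $\omega(S_n)$ has law $\varepsilon_{X\cdot x_0}^{\partial V_X}$, independently of $\omega(R_n)$. Since $\sum_n\tilde{\mathbb{P}}(\alpha_n<\kappa_n(\omega)\mid\mathcal{F}_{R_n})=\infty$ almost surely, Borel--Cantelli gives $N_1<\infty$ a.s., and then $\mu(Y):=\tilde{\mathbb{P}}(X_{N_1}=Y)$ defines a probability measure on $\Gamma$ satisfying $\tilde{\mathbb{P}}(X_{N_1}=Y,\,\omega(S_{N_1})\in A)=\mu(Y)\,\varepsilon_{Y\cdot x_0}^{\partial V_Y}(A)$.

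The general case would then follow by induction on $k$. I would apply the strong Markov property of $\omega$ at the stopping time $S_{N_1}$: conditional on $\{X_{N_1}=Y\}$, the shifted path $\omega(S_{N_1}+\cdot)$ is an independent Brownian motion started from $\omega(S_{N_1})\sim\varepsilon_{Y\cdot x_0}^{\partial V_Y}$. Left-translation by $Y^{-1}$ is an isometry of $\Hyp$ permuting the pairs $\{(F_{Y'},V_{Y'})\}_{Y'\in\Gamma}$ via $Y'\mapsto Y^{-1}Y'$, so the shifted-and-translated process is statistically identical to a fresh independent copy of the entire construction. Hence $(Y^{-1}X_{N_2},\dots,Y^{-1}X_{N_k},Y^{-1}\omega(S_{N_k}))$ is independent of $X_{N_1}=Y$ and has the same joint law as $(X_{N_1},\dots,X_{N_{k-1}},\omega(S_{N_{k-1}}))$, which closes the induction and produces the claimed product formula.

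The main obstacle is organising the filtration on $(\widetilde\Omega,\tilde{\mathbb{P}})$ so that this strong-Markov-plus-coin-flip argument is rigorous: one needs an enlarged filtration under which each $N_k$ is a stopping time, each $\kappa_n$ is measurable at time $S_n$, the $(\alpha_n)$ remain independent of $\omega$, and the strong Markov property of $\omega$ survives conditioning on the coin-flip outcomes. The cleanest path is to work in the filtration generated jointly by $\omega$ and by $(\alpha_1,\dots,\alpha_n)$ up to the $n$-th visit, which is essentially the content of the Lyons--Sullivan lemma itself; the uniform two-sided bound $1/C^2\leq\kappa_n\leq 1$, coming from the $\Gamma$-invariance of the Harnack constant $C$, then keeps every Bernoulli trial non-degenerate and guarantees $N_k<\infty$ a.s.\ for all $k$.
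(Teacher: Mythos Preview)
Your proposal is essentially correct, but you are proving far more than what the corollary asks for. In the paper, the corollary is not given a separate proof: it is an immediate consequence of the preceding Theorem~\ref{theodiscretisation} (cited from \cite{LS}). Indeed, once one has the identity
\[
\tilde{\mathbb{P}}(X_{N_1}=Y_1;\cdots;X_{N_k}=Y_k;\omega(S_{N_k})\in A)=\mu(Y_1)\,\mu(Y_1^{-1}Y_2)\cdots\mu(Y_{k-1}^{-1}Y_k)\,\varepsilon_{Y_k\cdot x_0}^{\partial V_{Y_k}}(A),
\]
taking $A=\Hyp$ gives $\tilde{\mathbb{P}}(X_{N_1}=Y_1;\cdots;X_{N_k}=Y_k)=\mu(Y_1)\,\mu(Y_1^{-1}Y_2)\cdots\mu(Y_{k-1}^{-1}Y_k)$, which is precisely the statement that the increments $\gamma_{N_k}=X_{N_{k-1}}^{-1}X_{N_k}$ are i.i.d.\ with law $\mu$.

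What you have written is instead a sketch of the proof of Theorem~\ref{theodiscretisation} itself, i.e.\ of the Lyons--Sullivan discretization. That sketch is accurate: the rejection-sampling computation showing that acceptance at step $n$ forces $\omega(S_n)\sim\varepsilon_{X_n\cdot x_0}^{\partial V_{X_n}}$ with conditional acceptance probability $1/C$, followed by the strong Markov property at $S_{N_1}$ combined with the $\Gamma$-equivariance of the whole picture, is exactly how the argument in \cite{LS} runs. You also correctly identify the main technical point, namely setting up an enlarged filtration on $\widetilde{\Omega}$ in which the $N_k$ are stopping times and the strong Markov property survives the coupling with the coin flips $(\alpha_n)$. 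So nothing is wrong; you have simply reproved the input theorem rather than deduced the corollary from it.
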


We will need the following proposition:
\begin{pro}\label{propo}
\begin{enumerate}
\item There is a real $T>0$ such that $\tProb$ almost surely $\frac{S_{N_k}}{k}$ converges to $T$ when $k$ goes to infinity (see \cite[Corollaire 3.4]{KL}).
\item The measure $\mu$ is symmetric, i.e. for every $\gamma\in\Gamma$, $\mu(\gamma)=\mu(\gamma^{-1})$ (see \cite[Proposition 2.3]{BL}).
\end{enumerate}
\end{pro}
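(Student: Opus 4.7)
The proposition has two independent parts, which I would address separately; both are recorded here as properties of $\mu$ and $S_{N_k}$ used in the sequel, and both are essentially standard consequences of the Furstenberg--Lyons--Sullivan construction already set up in this section.

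For part (1), the plan is to apply the strong law of large numbers to the increments $\tau_k := S_{N_k} - S_{N_{k-1}}$ for $k\ge 1$ (with the convention $S_{N_0}=0$). The crucial property of the rejection procedure defining $N_k$ is that, conditionally on $X_{N_k}=\gamma$, the point $\omega(S_{N_k})$ is distributed on $\partial V_\gamma$ as the harmonic measure $\varepsilon_{\gamma\cdot x_0}^{\partial V_\gamma}$; this is precisely what makes Theorem \ref{theodiscretisation} work. Using the strong Markov property at time $S_{N_k}$ together with the $\Gamma$-invariance of the Poincaré metric, the conditional law of $\tau_{k+1}$ given the past depends only on $\omega(S_{N_k})$ modulo $\Gamma$, which is always $\varepsilon_{x_0}^{\partial V_{Id}}$; so $(\tau_k)_{k\ge 2}$ is an i.i.d. sequence. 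It remains to check that the common expectation $T$ is positive and finite. Positivity is immediate since each excursion $R_{n+1}-S_n$ and each exit time $S_n-R_n$ is a.s. positive. Finiteness reduces to two standard estimates: the expected return time from $\partial V_{Id}$ to $\bigcup F_X$ modulo $\Gamma$ is finite because $S=\Hyp/\Gamma$ has finite volume, and $N_1$ has an exponential tail because the acceptance probabilities are uniformly bounded below by $1/C^2$ thanks to \eqref{kappa}. Wald's identity then bounds $\mathbb{E}[\tau_1]$, and the strong law gives $S_{N_k}/k\to T$ $\tProb$-almost surely.

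For part (2), the plan is to use the reversibility of Brownian motion on $S=\Hyp/\Gamma$ with respect to the volume measure. Projecting to $S$, one rewrites $\mu(\gamma)$ as an integral over Brownian paths in $S$ starting at $x_0$, performing a (geometric) number of rejected excursions between $\partial V_{Id}$ and $\bigcup F_X/\Gamma$, and terminating on a final excursion whose lift to $\Hyp$ joins $x_0$ to $\gamma\cdot x_0$; the acceptance weights are products of the Radon--Nikodym factors $\kappa_n$, which are ratios of harmonic measures on $\partial V_{X_n}$. Time-reversing each such Brownian path on $S$ produces another Brownian path of equal Wiener weight (by reversibility), and lifting it one obtains a path joining $\gamma\cdot x_0$ to $x_0$; after relabelling by $\gamma^{-1}$ this contributes to $\mu(\gamma^{-1})$. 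The acceptance factors are preserved under time reversal: the ratio of harmonic measures defining $\kappa_n$ can be expressed via the Green function $G_{\Hyp}(x,y)$, and the identity $G_{\Hyp}(x,y)=G_{\Hyp}(y,x)$ combined with the $\Gamma$-equivariance gives the desired matching between contributions to $\mu(\gamma)$ and to $\mu(\gamma^{-1})$.

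The main obstacle will be the bookkeeping in part (2): one must keep track of the whole sequence of rejected and accepted excursions and verify that the integral expression for $\mu(\gamma)$ is manifestly invariant under time reversal once the Green-function symmetry is used. Part (1) is mostly routine once the i.i.d. structure of the increments $\tau_k$ is established; its only subtle point is the integrability of $\tau_1$, which is handled by the geometric tail of $N_1$ coming from the uniform lower bound \eqref{kappa} on the $\kappa_n$.
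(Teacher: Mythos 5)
Your proposal is correct and follows essentially the same route as the paper, which gives no proof of its own but refers to \cite{KL} and \cite{BL}: part (1) is precisely the renewal/strong-law argument of Karlsson--Ledrappier (the key point being that at each accepted time the position $\omega(S_{N_k})$ is distributed as $\varepsilon_{X_{N_k}\cdot x_0}^{\partial V_{X_{N_k}}}$, so the increments $S_{N_k}-S_{N_{k-1}}$ are i.i.d.\ with finite positive mean, integrability coming from the geometric tail of $N_1$ via the bound $\kappa_n\geq 1/C^2$), and part (2) is the Ballmann--Ledrappier argument using reversibility of Brownian motion and the symmetry of the Green function for the $\Gamma$-equivariant family $(F_X,V_X)$.
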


\paragraph{Random walks. }
The idea of the proof of theorem \ref{theoprincipal} (and of the similar theorem in \cite{Hus}) is to push forward (via the monodromy representation $\rho$) the measure $\mu$ obtained with the discretization procedure in order to get a right random walk in $\PSLC$. Let us recall briefly some basic facts about random walks in such matricial groups. The next two assertions were present in the original paper of Furstenberg \cite{Fur}. The reader could find a more recent presentation for example in \cite{BLa}. 
Let $G$ be a finitely generated subgroup of $\PSLC$. Let $\widetilde{\mu}$ be a probability measure on $G$. Choose randomly with law $\widetilde{\mu}$ and independently a sequence $(\gamma_n)_{n\geq 1}$ of elements of $G$ and consider the product $Y_n=\gamma_1\cdot\gamma_2\cdots\gamma_n$.

If the group generated by the support of $\widetilde{\mu}$ is non-elementary and $\widetilde{\mu}$ satisfies the integrability condition (the situation we have in \cite{Hus})
$$\displaystyle\int_{\gamma\in G}\log||\gamma||d\widetilde{\mu}(\gamma)<+\infty,$$
then there exists $\lambda>0$ such that almost surely, 
$$\lim\limits_{n\to\infty}\frac{1}{n}\log||Y_n||=\lambda.$$

When the integrability condition is not satisfied anymore (the situation we have in the present paper), we do not know at what speed $||Y_n||$ goes to $\infty$, but at least, we know that it goes:
\begin{pro}\label{proma}
If the group generated by the support of $\widetilde{\mu}$ is non-elementary, then almost surely, 
$$\lim\limits_{n\to\infty}||Y_n||=\infty.$$
\end{pro}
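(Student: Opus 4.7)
The plan is to argue by contradiction using Furstenberg's theory of stationary measures on $\CPun$ for a non-elementary random walk, a body of results available without any moment hypothesis on the driving measure. Let $G_{\widetilde\mu}\subset\PSLC$ denote the subgroup generated by the support of $\widetilde\mu$, which is non-elementary by assumption. I would first invoke two classical results of Furstenberg (see \cite{Fur} and \cite{BLa}): (i) there exists a $\widetilde\mu$-stationary probability measure $\nu$ on $\CPun$, which must be atom-free because the finite set of atoms of maximal mass would otherwise be $G_{\widetilde\mu}$-invariant and the uniform probability on it would provide a $G_{\widetilde\mu}$-invariant measure on $\CPun$, contradicting non-elementarity; and (ii) the contraction principle, asserting that $\widetilde\mu^{\otimes\mathbb{N}}$-almost surely the pushforwards $(Y_n)_\ast\nu$ converge weakly to a Dirac mass $\delta_{Z(\omega)}$ at a random point $Z(\omega)\in\CPun$.

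Next, suppose for contradiction that the event $A:=\{\liminf_n||Y_n||<+\infty\}$ has positive probability. Because $g\mapsto||g||$ is a proper continuous function on $\PSLC$, on $A$ the sequence $(Y_n)$ has a subsequence $(Y_{n_k})$ contained in a closed ball $\{||g||\leq R\}$, which is compact; extracting further, I may assume $Y_{n_k}\to Y_\infty\in\PSLC$. The natural action of $\PSLC$ on the space of Borel probability measures on $\CPun$ (equipped with the weak-$\ast$ topology) is continuous, hence on one hand $(Y_{n_k})_\ast\nu\to (Y_\infty)_\ast\nu$, and on the other hand $(Y_{n_k})_\ast\nu\to\delta_{Z(\omega)}$ by (ii). Therefore $(Y_\infty)_\ast\nu=\delta_{Z(\omega)}$, and so $\nu=(Y_\infty^{-1})_\ast\delta_{Z(\omega)}=\delta_{Y_\infty^{-1}Z(\omega)}$ is itself a Dirac mass. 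This contradicts the atom-freeness of $\nu$ from (i), concluding the proof that $||Y_n||\to\infty$ almost surely.

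The only delicate point is to check that the two Furstenberg-type ingredients really are available in the generality we need, i.e.~without any integrability hypothesis on $\widetilde\mu$. Both rest only on compactness of the space of probability measures on $\CPun$, on the proximality of the $G_{\widetilde\mu}$-action on $\CPun$, and on the bounded martingale convergence theorem applied to the measure-valued process $Z_n:=(Y_n)_\ast\nu$ (which is a martingale in $n$ precisely because $\nu$ is stationary); no moment bound on $\widetilde\mu$ enters the argument. This flexibility is precisely what is needed here, since in the present setting with hyperbolic monodromy the integrability condition \eqref{integrability} that underlies the Lyapunov asymptotic \eqref{Lyapunov} in the non-hyperbolic case is no longer available.
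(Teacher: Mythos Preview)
Your argument is correct and is precisely the classical route through Furstenberg's theory. Note, however, that the paper does not supply a proof of this proposition at all: it is stated as a known fact, with the surrounding text pointing to \cite{Fur} and \cite{BLa} as references. So there is no ``paper's proof'' to compare against; you have essentially written out the argument that those references contain.

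It is worth remarking that the two Furstenberg-type ingredients you isolate---the non-atomic stationary measure $\nu$ and the almost sure convergence $(Y_n)_\ast\nu\to\delta_{Z(\omega)}$---are exactly the facts the paper invokes later (without proof) in the proof of Lemma~\ref{lemm2}, where the convergence $n(Y_k)\to Z(\omega)$ and the non-atomicity of the law of $Z$ are used. So your write-up is fully consistent with the toolkit the paper relies on, and your closing paragraph correctly identifies why no moment hypothesis is needed: the martingale $(Y_n)_\ast\nu$ is bounded (measure-valued on a compact space), and proximality plus strong irreducibility of a non-elementary subgroup of $\PSLC$ acting on $\CPun$ force the limit to be a Dirac mass.
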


\paragraph{proof of proposition \ref{mainpro}.}
Let $S$ be a hyperbolic Riemann surface of finite volume with at least one puncture. Write $S=\Hyp/\Gamma$ where $\Gamma$ is a subgroup of $\PSLR$ and denote by $p:\Hyp\longrightarrow\CPun$ the universal covering map. Fix a base point $x_0\in\Hyp$ and let $F$ be the Dirichlet fundamental domain of $x_0$ (i.e. $F=\{x\in\Hyp\text{ s.t. } \forall \gamma\in\Gamma\text{; } d(x,x_0)\leq d(x,\gamma x_0)\}$). The set $\GG:=\{\gamma\in\Gamma\text{ s.t. } \gamma\cdot F\cap F\neq\varnothing\}$ is a (symmetric) set of generators of $\Gamma$. If $a,b\in\Hyp$, then there exists $\gamma_a,\gamma_b\in\Gamma$ such that $a\in\gamma_a\cdot F$ and $b\in\gamma_b \cdot F$. The element $\gamma_a^{-1}\gamma_b$ writes in an unique way as a reduced word in the elements of $\GG$. We denote by $L_{[a,b]}$ the length of this word. There is a little ambiguitie in this definition because $a$ (or $b$) could belong to the edges of the tesselation $(\gamma\cdot F)_{\gamma\in\Gamma}$ in which case there exist two elements $\gamma_a$ such that $a\in\gamma_a\cdot F$. Then $L_{[a,b]}$ can differ of at most two depending on the choices we make. But, this will not affect the next results since we will be interested in the asymptotic of a big word. To simplify the notations, if $\omega\in\Omega_{x_0}$ is a Brownian path (for the Poincar\'e metric) starting at $x_0$, we denote by $L_t(\omega)=L_{[\omega(0),\omega(t)]}$ and $L_{[t_1,t_2]}(\omega)=L_{[\omega(t_1),\omega(t_2)]}$. We want to prove that: 
\begin{enumerate}
\item $\Prob_{x_0}$-a.s.: $\liminf\limits_{t\to\infty} \frac{L_t}{t\log t}<\infty$ and
\item There exists $C>0$ such that $\lim\limits_{t\to\infty} \Prob_a\left(\frac{L_t}{t\log t}\leq C\right)=1.$
\end{enumerate}

In the sequel, it will be usefull to think the Brownian motion running in the surface $S$ instead of running in its universal cover $\Hyp$. For each puncture $p_{\alpha}$, let $d_{\alpha}$ and $D_{\alpha}$ be two closed horodiscs around $p_{\alpha}$ with $d_{\alpha}\subsetneq D_{\alpha}$, the $D_{\alpha}$ being choosen small enough so that $p(x_0)\notin \cup_{\alpha} D_{\alpha}$ and the $D_{\alpha}$ are pairwise disjoint. Assume moreover that all the $d_{\alpha}$ (resp. $D_{\alpha}$) are of the same size. For every $x\in S\setminus \cup_{\alpha} d_{\alpha}$ and every $\omega\in\Omega_x$, we define recursively two random sequences of times $U_n$ and $V_n$ by:
$$U_1(\omega)=\inf\left\{t\geq0 ; \omega(t)\in \cup_{\alpha} \partial d_{\alpha}\right\}$$
and, for $n\geq1$:
$$V_n(\omega)=\inf\left\{t\geq U_{n}(\omega) ; \omega(t)\in \cup_{\alpha} \partial D_{\alpha} \right\}$$
$$U_{n+1}(\omega)=\inf\left\{t\geq V_{n}(\omega) ; \omega(t)\in \cup_{\alpha} \partial d_{\alpha} \right\}$$
We have
\begin{equation}\label{ineq}
L_{V_n}\leq \sum_{k=1}^{n}{L_{[U_k,V_k]}}+L_{[0,U_1]}+\sum_{k=1}^{n-1}{L_{[V_k,U_{k+1}]}}
\end{equation}

\paragraph{Proof of Item 1.}
Firstly, the number of \og big\fg loops (i.e. the number of loops during the portions of trajectory between the $V_k$ and the $U_{k+1}$) before time $V_n$ is of the order of $n$. More precisely we have the following lemma:
\begin{lem}\label{lemmelongueur}
There is a constant $L>0$ such that $\Prob_{x_0}$-almost surely: 
$$\frac{L_{[0,U_1]}+\sum_{k=1}^{n-1}{L_{[V_k,U_{k+1}]}}}{n}\underset{n\to\infty}\longrightarrow L.$$
\end{lem}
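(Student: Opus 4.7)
The plan is to regard $Z_k := \omega(V_k) \in \mathcal{K} := \bigcup_\alpha \partial D_\alpha$ as a Markov chain on the compact state space $\mathcal{K} \subset S$ and apply Birkhoff's ergodic theorem to identify $L = \int_{\mathcal{K}} \ell(z)\, d\nu(z)$, where $\ell(z) := \mathbb{E}_z[L_{[V_1, U_2]}]$ and $\nu$ is the chain's unique invariant probability measure. By the strong Markov property of Brownian motion and the $\Gamma$-equivariance of the lift to $\Hyp$, the law of the word-length increment $L_{[V_k, U_{k+1}]}$ conditionally on the past up to $V_k$ depends only on the projection of $\omega(V_k)$ to $S$, i.e.\ only on $Z_k$, so $(Z_k)_{k \geq 1}$ is genuinely a Markov chain on $\mathcal{K}$. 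Its transition kernel is strong Feller and irreducible since Brownian motion has full topological support; together with compactness of $\mathcal{K}$ this yields uniform ergodicity, a unique invariant probability measure $\nu$, and the Birkhoff convergence $\frac{1}{n}\sum_{k=1}^{n} f(Z_k) \to \int f\, d\nu$ almost surely for every $f \in L^1(\nu)$.

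The main technical step is to show that $\ell$ is bounded on $\mathcal{K}$. The excursion $\omega|_{[V_k, U_{k+1}]}$ stays in the compact manifold-with-boundary $S \setminus \bigcup_\alpha \mathrm{int}(d_\alpha)$ and is absorbed on $\bigcup_\alpha \partial d_\alpha$, so standard elliptic estimates yield a uniform bound $\mathbb{E}_z[U_2 - V_1] \leq T_0$. The subtlety is that word length is not quasi-isometric to hyperbolic distance in the cusps: a parabolic power $p^n$ has word length $n$ but hyperbolic displacement only of order $\log n$. To get around this I would decompose each excursion into its passages through the thick part of $S$, where word length is comparable to hyperbolic distance and both grow at a bounded rate with time, and its sojourns in the annular regions $D_\alpha \setminus d_\alpha$. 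Lifting such a sojourn to $\Hyp$ one lands in a horoball strip $\{H_D < y < H_d\}$ on which $dX_t = Y_t\, dB_t$ with $Y_t \leq H_d$; since one unit of $x$-displacement corresponds to one parabolic generator, the word length generated per unit time in the strip is at most of order $H_d$. Together with the uniform duration bound, this gives $\ell(z) \leq C$ uniformly on $\mathcal{K}$, and the same reasoning controls the conditional second moment.

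To conclude, I would write $L_{[V_k, U_{k+1}]} = \ell(Z_k) + M_k$ where $(M_k)$ is a martingale difference sequence with uniformly bounded conditional variance. The ergodic theorem applied to $\ell$ gives $\frac{1}{n}\sum_{k=1}^{n-1} \ell(Z_k) \to L := \int \ell\, d\nu$ almost surely, the martingale strong law gives $\frac{1}{n}\sum_{k=1}^{n-1} M_k \to 0$ almost surely, and the boundary term $L_{[0, U_1]}/n$ vanishes since $L_{[0,U_1]}$ is almost surely finite. Positivity $L > 0$ follows from the observation that starting from any $z \in \mathcal{K}$ there is positive probability that the excursion reaches a different cusp, contributing at least one to the word length. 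The main obstacle of the whole argument is the uniform bound on $\ell$: one must really exploit the bounded height of the strips $D_\alpha \setminus d_\alpha$, since deep in a cusp Brownian motion can accumulate word length much faster than hyperbolic distance.
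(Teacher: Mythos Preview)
Your strategy coincides with the paper's: both recognise $(\omega(V_k))_{k\ge 1}$ as a Markov chain on the compact set $\bigcup_\alpha\partial D_\alpha$ with a unique stationary probability $\nu$, establish integrability of the word-length increment, and invoke the ergodic theorem. The paper applies Birkhoff directly on path space to the functional $\omega\mapsto L_{[0,U_1]}(\omega)$ under the time-shift $\sigma_{V_1}$ (with respect to the invariant measure $\mathbb P_\nu$), and then transfers the almost-sure statement to $\mathbb P_{x_0}$ via the positive density of $\nu$ with respect to the hitting distribution $\varepsilon_{p(x_0)}$ and the strong Markov property; this is marginally cleaner than your decomposition $L_{[V_k,U_{k+1}]}=\ell(Z_k)+M_k$, since it needs only $L^1$-integrability and avoids the second-moment bound on the martingale differences.

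The one place where your argument is actually tighter is the integrability of $L_{[0,U_1]}$. The paper writes $\sup_z\mathbb E_z[L_{[0,U_1]}]\le cst\cdot\sup_\tau\mathbb E_\tau[d_\Hyp(\omega(0),\omega(U_1))]$ and then controls the right-hand side through the exponential tails of $\sup_{s\le t}d_\Hyp(\omega(0),\omega(s))$ and of $U_1$. But a pointwise comparison $L_{[a,b]}\le C\,d_\Hyp(a,b)$ is precisely what you flag as failing near the cusps: two lifts on $p^{-1}(\partial d_\alpha)$ separated by $n$ parabolic translates sit at hyperbolic distance only of order $\log n$, while the word length is $n$. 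Your analysis in the horoball strip $D_\alpha\setminus d_\alpha$ --- word length controlled by horizontal displacement, $dX_t=Y_t\,dB_t$ with $Y_t$ bounded above by the height of $\partial d_\alpha$, combined with the exponential tail of $U_1$ --- is the correct way to obtain $\sup_z\mathbb E_z[L_{[0,U_1]}]<\infty$, and it plugs this gap in the paper's presentation.
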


\begin{proof}
We cannot apply directly the strong law of large numbers because the $(L_{[V_k,U_{k+1}]})_{k\geq 1}$ are not independent and not identically distributed with respect to $\mathbb{P}_{x_0}$. But we remark that $(\omega(V_k))_{k\geq 1}$ is a stationary Markov process whith values in the compact set $\cup_{\alpha}\partial D_{\alpha}$. Hence, there is a unique probability measure $\nu$ on $\cup_{\alpha}\partial D_{\alpha}$ which is stationary for the Markov process, i.e. which satisfies for every Borel set in $\cup_{\alpha}\partial D_{\alpha}$
$$\nu(A)=\displaystyle\int_{\cup_i\partial D_i}\Prob_{x}(\omega(V_1)\in A)d\nu(x).$$
Moreover, $\nu$ has a continuous positive density with respect to the measure $\varepsilon_{p(x_0)}$ defined by $\varepsilon_{p(x_0)}(A)=\Prob_{p(x_0)}(\omega(V_1)\in A)$. 
As $\nu$ is the unique stationary measure on $\cup_{\alpha}\partial D_{\alpha}$ for the Markov process,  $\mathbb{P}_{\nu}$ is an invariant ergodic measure for the time-shift $\sigma_{V_1}$. So if $L_{[0,U_1]}$ is $\mathbb{P}_{\nu}$-integrable, by Birkhoff ergodic theorem, we obtain the result $\mathbb{P}_{\nu}$-almost surely. The measure $\nu$ having a continuous positive density with respect to $\varepsilon_{p(x_0)}$, this implies the result $\mathbb{P}_{\varepsilon_{p(x_0)}}$-almost surely, which implies the result $\Prob_{p(x_0)}$-almost surely (and thus $\Prob_{x_0}$-almost surely) by the strong Markov property. It remains to prove that $L_{[0,U_1]}$ is $\mathbb{P}_{\nu}$-integrable :
\begin{align*}
\mathbb{E}_{\nu}[L_{[0,U_1]}]&\leq \underset{z\in\cup_{\alpha}\partial D_{\alpha}}\sup {E}_{z}[L_{[0,U_1]}]\\
                            &\leq cst\cdot\underset{\tau\in p^{-1}(\cup_{\alpha}\partial D_{\alpha})}\sup {E}_{\tau}[d_{\Hyp}(\omega(0),\omega(U_1))]
\end{align*}
The following inequality is classical (see for exemple \cite[lemma 2.11]{DD2}). There exist constants $K>0$ and $\alpha_1>0$ such that for all $\tau\in\Hyp$ and for all times $t\geq 0$:
\begin{equation}\label{sup}
\mathbb{P}_{\tau}\Big(\underset{0\leq s\leq t}\sup d_{\Hyp}(\omega(0),\omega(s))\geq K\cdot t\Big)\leq e^{-\alpha_1 t}
\end{equation}
On the other hand, there exists $0<p<1$ and a time $t_0$ such that for every $x\in S$, we have $\mathbb{P}_x\big(U_1\geq t_0\big)<p$. Then, by the strong Markov property, we deduce that for all $t$, we have:
\begin{align*}
\mathbb{P}_x\big(U_1\geq t_0+t\big)&\leq \mathbb{P}_x\big(U_1\geq t\big)\cdot \sup_{z\in S}\mathbb{P}_z\big(U_1\geq t_0\big)\\
&\leq p\cdot \mathbb{P}_x\big(U_1\geq t\big).
\end{align*}
This implies that there exist positive constants $C$, $\alpha_2$ such that for every $x\in\cup_{\alpha}\partial D_{\alpha}$:
\begin{equation}\label{U1}
\mathbb{P}_x\big(U_1\geq t\big)\leq Ce^{-\alpha_2 t}
\end{equation}
For every $t\geq 0$ and every $\tau\in p^{-1}(\cup_{\alpha}\partial D_{\alpha})$, we have:
$$\mathbb{P}_{\tau}\big(d_{\Hyp}(\omega(0),\omega(U_1))\geq K\cdot t\big)\leq \mathbb{P}_{\tau}\Big(\underset{0\leq s\leq t}\sup d_{\Hyp}(\omega(0),\omega(s))\geq K\cdot t\Big)+\mathbb{P}_{p(\tau)}\big(U_1\geq t\big)$$
Thus, according to inequalities \eqref{sup} and \eqref{U1}, this quantitie decreases exponentially fast with $t$.
This implies that for all $\tau\in p^{-1}(\cup_{\alpha}\partial D_{\alpha})$, ${E}_{\tau}[d_{\Hyp}(\omega(0),\omega(U_1))]$ is finite and thus $\mathbb{E}_{\nu}[L_{[0,U_1]}]$ is also finite.
\end{proof}

Secondly, for every $\alpha$, there is a conformal bijective map from $D_{\alpha}$ to $D_{eucl}(0,1)\setminus\{0\}\subset \mathbb{C}$ identifying $d_{\alpha}$ with some disc $D_{eucl}(0,r)\setminus\{0\}$ ($0<r<1$). For an Euclidean Brownian motion starting at a point of $\partial D_{eucl}(0,r)$, let $\theta_t$ be a local continuous determination of the winding around $0$. If $V$ denote the hitting time of $\partial D_{eucl}(0,1)$, then it is classical that the law of $\theta_V$ is that of a Cauchy law with paramater $a=\log\frac{1}{r}$ (i.e. the distribution law of $\theta_V$ is $\frac{1}{\pi}\cdot\frac{a}{a^2+x^2}$). Moreover, it is well known that a sequence $X_n$ of i.i.d. random variables with Cauchy laws with parameter $a$ satisfies the following (see \cite{FP}):
$$\liminf\limits_{n\to\infty} \frac{\sum_{k=1}^{n}|X_k|}{n\log n}\underset{n\to\infty}\longrightarrow a\cdot\frac{2}{\pi}$$
From this and the conformal invariance of the Brownian motion, we deduce that:  
$$\liminf\limits_{n\to\infty} \frac{\sum_{k=1}^{n}{L_{[U_k,V_k]}}}{n\log n}\underset{n\to\infty}\longrightarrow cst$$
From this, lemma \ref{lemmelongueur} and inequalitie \eqref{ineq} we deduce that:
$$\liminf\limits_{n\to\infty} \frac{L_{V_n}}{n\log n}<+\infty$$
Using exactly the same arguments as in the proof of proposition \ref{propo} given by Karlsson-Ledrappier, we can prove that there exists a constant $S>0$ such that $\Prob_{x_0}$-almost surely, $\frac{V_n}{n}$ converges to $S$ when $n$ goes to infinity, this implies that:
$$\liminf\limits_{n\to\infty} \frac{L_{V_n}}{V_n\log V_n}<+\infty$$
and proposition \ref{mainpro} Item $1$ is proved.\\

\paragraph{proof of Item 2.} Fix $t>0$. Assume firstly that $t\in[V_n,U_{n+1}]$ for some $n$. Then:
$$L_t\leq \sum_{k=1}^{n}{L_{[U_k,V_k]}}+L_{[0,U_1]}+\sum_{k=1}^{n-1}{L_{[V_k,U_{k+1}]}}+L_{[V_n,t]}.$$
We have seen that $L_{[U_k,V_k]}=|X_k|$ where $(X_k)_{k\geq 0}$ is a sequence of i.i.d. Cauchy variables with parameter $a>0$. As it is mentioned in \cite[p 506]{Gru}, by computation of the characteristic functions, we deduce that:
$$\frac{\sum_{k=1}^{n}{L_{[U_k,V_k]}}}{n}-\frac{2a}{\pi}\log n-\frac{2a}{\pi}$$
converges in law to $a\mathcal{A}$ where $\mathcal{A}$ is a totally asymmetric Cauchy random variable with characteristic function:
$$\phi(u)=\exp \left(-|u|\big(1+\frac{2i}{\pi}sgn(u)\log|u|\big)\right).$$
So, if $C>\frac{2a}{\pi}$, then:
\begin{align*}
\Prob_{x_0}&\left(\frac{\sum_{k=1}^{n}{L_{[U_k,V_k]}}}{n\log n}\leq C\right)=\Prob_{x_0}\left(\frac{\sum_{k=1}^{n}{L_{[U_k,V_k]}}}{n}\leq C\log n\right)\\
&=\Prob_{x_0}\left(\frac{\sum_{k=1}^{n}{L_{[U_k,V_k]}}}{n}-\frac{2a}{\pi}\log n-\frac{2a}{\pi}\leq (C-\frac{2a}{\pi})\log n -\frac{2a}{\pi}\right)\\
&=\Prob_{x_0}\left(a\mathcal{A}\leq (C-\frac{2a}{\pi})\log n -\frac{2a}{\pi}\right)\underset{n\to\infty}\longrightarrow 1
\end{align*}

By lemma \ref{lemmelongueur}, almost surely:
$$\frac{L_{[0,U_1]}+\sum_{k=1}^{n-1}{L_{[V_k,U_{k+1}]}}}{n\log n}\underset{n\to\infty}\longrightarrow 0.$$
This implies that, for every $\varepsilon>0$:
$$\Prob_{x_0}\left(\frac{L_{[0,U_1]}+\sum_{k=1}^{n-1}{L_{[V_k,U_{k+1}]}}}{n\log n}>\varepsilon\right)\underset{n\to\infty}\longrightarrow 0.$$

As $t\in[V_n,U_{n+1}]$ for some $n$, we have for every $\varepsilon>0$:
\begin{align*}
\Prob_{x_0}&\left(\frac{L_{[V_n,t]}}{n\log n}>\varepsilon\right)\leq\Prob_{x_0}\left(\frac{\sup_{s\in[V_n,U_{n+1}]}L_{[V_n,s]}}{n\log n}>\varepsilon\right)\\
&\leq\sup_{z\in\cup_{\alpha}\partial D_{\alpha}}\Prob_z\left(\frac{\sup_{s\in[0,U_{1}]}L_s}{n\log n}>\varepsilon\right)\underset{n\to\infty}\longrightarrow 0.
\end{align*}

Let $C>\frac{2a}{\pi}$ and $\varepsilon>0$ small enough so that $C-\varepsilon>\frac{2a}{\pi}$. We have:
\begin{align*}
\Prob_{x_0}&\left(\frac{L_t}{n\log n}\leq C\right)\\
&\geq\Prob_{x_0}\left(\frac{\sum_{k=1}^{n}{L_{[U_k,V_k]}}}{n\log n}+\frac{L_{[0,U_1]}+\sum_{k=1}^{n-1}{L_{[V_k,U_{k+1}]}}+L_{[V_n,t]}}{n\log n}\leq C\right)
\end{align*}
which is bigger than the probability of the intersection of the events:
$$\left\{\frac{\sum_{k=1}^{n}{L_{[U_k,V_k]}}}{n\log n}\leq C-\varepsilon\right\}$$
and
$$\left\{\frac{L_{[0,U_1]}+\sum_{k=1}^{n-1}{L_{[V_k,U_{k+1}]}}+L_{[V_n,t]}}{n\log n}\leq \varepsilon\right\}.$$
The probability of the two previous events going to $1$ when $n$ goes to $\infty$, we deduce that:
\begin{equation}
\label{equation10}
\Prob_{x_0}\left(\frac{L_t}{n\log n}\leq C\right)\underset{n\to\infty}\longrightarrow 1.
\end{equation}

Let $C'>\frac{2a}{\pi}\cdot\frac{1}{S}$ where $S$ is the limit of $\frac{V_n}{n}$. Write $C'=\frac{2a\alpha_0}{\pi}\cdot\frac{\alpha_1}{S}$ with $\alpha_0>1$ and $\alpha_1>1$. By assumption $t\in[V_n,U_{n+1}]$ for some $n$, hence:


\begin{align*}
\Prob_{x_0}\left(\frac{L_t}{t\log t}\leq C'\right)&\geq \Prob_{x_0}\left(\frac{L_t}{V_n\log V_n}\leq C'\right)\\
&=\Prob_{x_0}\left(\frac{L_t}{n\log n}\cdot \frac{n\log n}{V_n\log V_n}\leq C'\right)\\
&\geq \Prob_{x_0}\left(\left\{\frac{L_t}{n\log n}\leq\frac{2a\alpha_0}{\pi}\right\}\cap \left\{\frac{n\log n}{V_n\log V_n}\leq \frac{\alpha_1}{S}\right\}\right)\\
&\geq 1-\Prob_{x_0}\left(\frac{L_t}{n\log n}>\frac{2a\alpha_0}{\pi}\right)-\Prob_{x_0}\left(\frac{n\log n}{V_n\log V_n}> \frac{\alpha_1}{S}\right)
\underset{n\to\infty}\longrightarrow 1,
\end{align*}
by inequality \eqref{equation10} and the fact that, almost surely $\frac{n\log n}{V_n\log V_n}\underset{n\to\infty}\longrightarrow \frac{1}{S}$.

Now, if $t\in[U_n,V_n]$ for some $n$, then we have:
$$L_t\leq \sum_{k=1}^{n-1}{L_{[U_k,V_k]}}+L_{[0,U_1]}+\sum_{k=1}^{n-1}{L_{[V_k,U_{k+1}]}}+L_{[U_n,t]}$$
and we have for every $\varepsilon>0$:
\begin{align*}
\Prob_{x_0}&\left(\frac{L_{[U_n,t]}}{n\log n}>\varepsilon\right)\leq\Prob_{x_0}\left(\frac{\sup_{s\in[U_n,V_n]}L_{[U_n,s]}}{n\log n}>\varepsilon\right)\\
&=\Prob_z\left(\frac{\sup_{s\in[0,V]}L_s}{n\log n}>\varepsilon\right)\underset{n\to\infty}\longrightarrow 0.
\end{align*}
where $z$ is any point belonging to some $\partial d_{\alpha}$ and $V$ denote the hitting time of $\partial D_{\alpha}$ and we conclude by the same reasoning as in the case $t\in[V_n,U_{n+1}]$.

\section{Proof of theorem \ref{theoHolonomie}.}\label{section4}

Firstly, remark that there exists $\lambda>0$ such that for any continuous path $\omega:[0,\infty[\to \Sigma\setminus\mathcal{S}$ with $\omega(0)=a$, we have: 
\begin{equation}
\label{lambda}
||Hol_t(\omega)||\leq e^{\lambda L_t(\omega)}.
\end{equation}
Then from Item $1$ in Proposition \ref{mainpro} and Item $2$ in theorem \ref{Gruet}, we deduce immediatly Items $1$ and $2$ in theorem \ref{theoHolonomie}.  \newline

The proof of Item $3$ is more complicated essentially because, in general, we can find elements $X$ of $\pi_1(\Sigma\setminus\mathcal{S})$ which write as reduced words with arbitrarily big length and such that $||\rho(X)||$ is small. 

Write $\Sigma\setminus\mathcal{S}=\Hyp/\Gamma$ and let $X_{N_k}$ be the sequence of elements of the fundamental group $\Gamma$ defined by the discretization procedure. By hypothesis, there exists a loop $\alpha_0$ around a puncture $p_0$ such that $\gamma_0=\rho(\alpha_0)$ is a hyperbolic element. We denote by $y_0$ (resp. $z_0$) the repulsive (resp. attractive) fixed point of $\gamma_0$ and by $s(\rho(X_{N_k}))$ (resp. $n(\rho(X_{N_k}))$) the south (resp. north) pole of $\rho(X_{N_k})$ coming from the Cartan decomposition of $\rho(X_{N_k})$. We have:

\begin{lem}\label{lemm2}
Let $(\beta_k)_{k\in\mathbb{N}}$ be a sequence with $\beta_k\underset{k\to\infty}\longrightarrow 0$ and let $(E_k)_{k\geq 1}$ be the sequence of events defined by:
$$E_k=\{d(s(\rho(X_{N_{k-1}})),z_0)\geq \beta_k\}.$$
We have:
$$\tProb(E_k)\underset{k\to\infty}\longrightarrow 1.$$
\end{lem}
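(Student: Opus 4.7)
The plan is to interpret $\rho(X_{N_k})$ as a right random walk on $\PSLC$ and apply classical Furstenberg theory to control the distribution of its south pole. Set $Y_k:=\rho(X_{N_k})$; combining the corollary of Theorem \ref{theodiscretisation} with $\rho$ shows that $(Y_k)_{k\geq 0}$ is a right random walk in $\PSLC$ with step distribution $\tilde\mu:=\rho_{*}\mu$. By Proposition \ref{propo}(2), $\mu$, hence $\tilde\mu$, is symmetric; the support of $\mu$ generates $\Gamma$, and since $\rho(\Gamma)$ is non-elementary by hypothesis, so is the subgroup of $\PSLC$ generated by the support of $\tilde\mu$.

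First, I would invoke the classical results of Furstenberg (see e.g.~\cite{BLa}): for such a non-elementary walk there exists a unique $\tilde\mu$-stationary probability measure $\nu$ on $\CPun$, and non-elementarity forces $\nu$ to be atomless; moreover, for any fixed $x\in\CPun$, $Y_k\cdot x$ converges $\tProb$-almost surely to a random point of law $\nu$. Combined with Lemma \ref{lemmetechnique1} and the fact that $\|Y_k\|\to\infty$ almost surely (Proposition \ref{proma}), this yields $d(Y_k\cdot x,n(Y_k))\to 0$ almost surely for fixed $x$, so the law of $n(Y_k)$ converges weakly to $\nu$.

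Next, I would use the elementary Cartan identity $s(\gamma^{-1})=n(\gamma)$, valid for every $\gamma\in\SLC$, obtained by conjugating $\operatorname{diag}(\lambda,\lambda^{-1})^{-1}$ by the Weyl element of $SU_2(\mathbb{C})$ in order to put $\gamma^{-1}$ in Cartan normal form. Since $\tilde\mu$ is symmetric, $Y_k^{-1}=\gamma_{N_k}^{-1}\cdots\gamma_{N_1}^{-1}$ has the same distribution as $Y_k$; therefore the law of $s(Y_k)=n(Y_k^{-1})$ coincides with the law of $n(Y_k)$, and the law of $s(Y_{k-1})$ also converges weakly to $\nu$ as $k\to\infty$.

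To conclude, fix $\varepsilon>0$. Since $\nu$ has no atom at $z_0$, one can choose $r>0$ with $\nu(\overline{D(z_0,r)})<\varepsilon/2$ and $\nu(\partial D(z_0,r))=0$; weak convergence then gives $k_0$ such that $\tProb(s(Y_{k-1})\in\overline{D(z_0,r)})<\varepsilon$ for all $k\geq k_0$. Since $\beta_k\to 0$ there exists $k_1\geq k_0$ with $\beta_k<r$ for $k\geq k_1$, and then
$$\tProb(E_k^c)=\tProb(d(s(Y_{k-1}),z_0)<\beta_k)\leq \tProb(d(s(Y_{k-1}),z_0)\leq r)<\varepsilon,$$
so $\tProb(E_k)\to 1$. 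The main potential obstacle in this scheme is the atomlessness of the Furstenberg measure $\nu$; this is exactly where non-elementarity enters in an essential way, via the classical dichotomy forbidding finite $\rho(\Gamma)$-invariant sets on $\CPun$.
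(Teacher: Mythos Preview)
Your proof is correct and follows essentially the same route as the paper: both use the Cartan identity $s(\gamma)=n(\gamma^{-1})$ together with the symmetry of $\mu$ (Proposition~\ref{propo}) to replace $s(Y_{k-1})$ by $n(Y_{k-1})$ in law, then invoke Furstenberg theory for the non-elementary walk to get convergence toward the atomless stationary measure $\nu$, and conclude by an $\varepsilon$--$r$ argument. The only cosmetic difference is that the paper asserts the almost sure convergence $n(Y_k)\to Z$ directly and bounds $\tProb(d(n(Y_k),z_0)<\beta_k)$ by $\nu(D(z_0,2\alpha))+\tProb(d(n(Y_k),Z)>\alpha)$, whereas you pass through weak convergence of the law of $n(Y_k)$; your detour via $Y_k\cdot x$ and Lemma~\ref{lemmetechnique1} to reach that convergence is unnecessary (the a.s.\ convergence of $n(Y_k)$ is the standard Furstenberg statement) but harmless.
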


\begin{proof}
To simplify the notations, we denote by $Y_k=\rho(X_{N_{k-1}})$. We have:
\begin{align*}
\tProb\big(d(s(Y_k),z_0)<\beta_k\big)&=\tProb\big(d(n(Y_k^{-1}),z_0)<\beta_k\big)\\
&=\tProb\big(d(n(Y_k),z_0)<\beta_k\big)
\end{align*}
The first equality is due to the fact that $s(Y_k)=n(Y_k^{-1})$ and the second one is due to the fact that for every $Y\in\rho(\Gamma)$, $\tProb(Y_k=Y)=\tProb(Y_k^{-1}=Y)$ which is a consequence of the fact that the measure $\widetilde{\mu}$ is symmetric (see proposition \ref{propo}).

Note that there exists $Z(\omega)\in\CPun$ such that almost surely $n(Y_k)\underset{k\to\infty}\longrightarrow Z$. Moreover $Z$ is distributed on $\CPun$ with respect to a $\widetilde{\mu}$-stationary measure $\nu$.

Let $\varepsilon>0$. As $\nu$ is non atomic, there exists $\alpha>0$ such that, for $k$ big enough, $\nu(D(z_0,2\alpha))<\varepsilon$. Moreover, for $k$ big enough, we have $\beta_k<\alpha$ and hence the event $\{d(n(Y_k)),z_0)<\beta_k\}$ is contained in $\{d(n(Y_k)),z_0)<\alpha\}$ which is equal to the union of the events
$$\{d(n(Y_k),z_0)<\alpha\}\cap\{d(z_0,Z)<2\alpha\}$$
and
$$\{d(n(Y_k),z_0)<\alpha\}\cap\{d(z_0,Z)\geq 2\alpha\}.$$

This implies that:
$$\tProb\big(d(n(Y_k),z_0)<\beta_k\big)\leq \tProb\big(d(z_0,Z)<2\alpha\big)+\tProb\big(d(n(Y_k),Z)>\alpha\big).$$

Almost surely, $n(Y_k)\underset{k\to\infty}\longrightarrow Z$. So, for $k$ big enough, $\tProb\big(d(n(Y_k),Z)>\alpha\big)<\varepsilon$. And we also have $\tProb\big(d(z_0,Z)<2\alpha\big)=\nu(D(z_0,2\alpha))<\varepsilon$.

This concludes the proof of the lemma.

\end{proof}

Let $t\mapsto h(t)$ be as in the hypothesis of theorem \ref{theoHolonomie} Item $3$. Let $t\mapsto \widetilde{h}(t)$ be a nondecreasing positive function such that $\lim\limits_{t\to\infty}\widetilde{h}(t)=+\infty$, $\lim\limits_{t\to\infty}\frac{\widetilde{h}(t)}{h(t)}=+\infty$ and $\int_1^{\infty}\frac{dt}{\widetilde{h}(t)}=+\infty$.

\begin{lem}\label{lemm1}
Let $(B_k)_{k\geq 1}$ be the sequence of events defined by:
$$B_k=\{X_{N_{k-1}}^{-1}X_{N_{k}}=\alpha_0^n \text{ with } n\geq \widetilde{h}(k)\}$$
We have:
$$\tProb(B_k)\geq \frac{cst}{\widetilde{h}(k)}.$$
\end{lem}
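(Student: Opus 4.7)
Since the increments $\gamma_{N_k} := X_{N_{k-1}}^{-1}X_{N_k}$ are i.i.d. with common law $\mu$, we have $\tProb(B_k) = \mu\bigl(\{\alpha_0^n : n \geq \widetilde{h}(k)\}\bigr)$, so the lemma reduces to the tail estimate $\mu(\{\alpha_0^n : n \geq N\}) \geq c/N$ uniformly in $N \geq 1$. The plan is to prove the stronger pointwise bound $\mu(\alpha_0^n) \geq c/n^2$ for $n$ large and then sum.

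The first step is a reduction of $\mu$ to the one-step hitting law of the chain $(X_n)$. By Theorem \ref{theodiscretisation}, keeping only the contribution $N_1 = 1$ and using $\kappa_1 \geq 1/C^2$ from \eqref{kappa},
\[
\mu(\gamma) \;\geq\; \widetilde{\mathbb{E}}\bigl[\kappa_1 \mathbb{1}_{X_1 = \gamma}\bigr] \;\geq\; \frac{1}{C^2}\,\Prob_{x_0}(X_1 = \gamma),
\]
so it is enough to show $\Prob_{x_0}(X_1 = \alpha_0^n) \geq c/n^2$ for large $n$.

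The second step is geometric. I would place coordinates on $\Hyp$ so that the parabolic fixed point of $\alpha_0$ sits at $\infty$, $\alpha_0$ acts as $z \mapsto z+1$, and (up to an isometry of $\Gamma$) $x_0 = i$; then $F_{\alpha_0^m}$ is a hyperbolic disc of radius $\delta$ centred at $m + i$, and since $\langle \alpha_0 \rangle$ is the stabiliser of $\infty$ in $\Gamma$, there exists $Y_0 > 1$ such that the horoball $\mathcal{H} = \{y > Y_0\}$ meets no $F_X$ with $X \notin \langle \alpha_0 \rangle$. Consider the event that the Brownian path starting at $x_0$ rises to $\partial \mathcal{H}$ without meeting any $F_X$ (positive probability $p_0 > 0$), excursions inside $\mathcal{H}$, then exits $\mathcal{H}$ downwards and is first captured by $F_{\alpha_0^n}$. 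On this event $X_1 = \alpha_0^n$. Projecting to the cylinder $\mathcal{H}/\langle\alpha_0\rangle$, which embeds in $S$ as a cusp neighbourhood conformally equivalent to a punctured disc, the horizontal displacement (equivalently the winding around $p_0$) accumulated during the excursion is the winding of a planar Brownian motion around the puncture---exactly the Cauchy-type law invoked via Jacobi uniformisation in the proof of Proposition \ref{mainpro}. Its density at $\pm n$ is of order $1/n^2$, yielding $\Prob_{x_0}(X_1 = \alpha_0^n) \geq c/n^2$ for large $|n|$.

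The main obstacle is making the final descent rigorous: one has to check that after the Cauchy-distributed horizontal displacement the path indeed enters the specific small ball $F_{\alpha_0^n}$ (rather than passing by it), and that trajectories which cross $\partial \mathcal{H}$ several times before being captured still contribute a positive fraction of the total mass. Both can be handled by conditioning on excursions that reach a sufficiently high horocycle and exploiting the $\alpha_0$-translation invariance of $\mathcal{H}$: this invariance reduces the hitting problem to hitting a single translate of $F_{Id}$ from an essentially horocyclically uniform starting position, for which the harmonic measure of a hyperbolic ball of radius $\delta$ is a uniform positive constant. Summing $\sum_{n \geq N} c/n^2 \geq c'/N$ then concludes the lemma.
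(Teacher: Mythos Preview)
Your approach is correct and follows essentially the same strategy as the paper: reduce to the one-step event $\{N_1=1\}$ via the bound $\kappa_1\geq 1/C^2$, then exhibit a positive-probability scenario in which the path enters the cusp at $p_0$, winds at least $\widetilde h(k)$ times (Cauchy tail $\sim 1/\widetilde h(k)$), and is afterwards captured by the correct $F_{\alpha_0^n}$ thanks to the $\alpha_0$-translation invariance.

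The only organisational differences are these. You aim for the pointwise estimate $\mu(\alpha_0^n)\geq c/n^2$ and then sum, whereas the paper bounds the tail $\sum_{n\geq\widetilde h(k)}$ directly; the two are equivalent. More substantively, the paper uses two nested horodiscs $d_0\subset D_0$ around $p_0$ rather than a single horoball $\mathcal H$: the path first hits $\partial d_0$ while staying inside the Dirichlet domain, then runs until it hits $\partial D_0$, and finally descends to $F_{\alpha_0^n}$. This decomposition makes the Cauchy law appear immediately (it is exactly the winding of a planar Brownian motion started on an inner circle and stopped on an outer one), and it dissolves what you call the ``main obstacle'': once the path exits $D_0$ inside the translate $\alpha_0^n\cdot D$, the probability of next hitting $F_{\alpha_0^n}$ equals, by applying the isometry $\alpha_0^{-n}$, the fixed positive number $\inf_{z\in D\cap\partial p^{-1}(D_0)}\Prob_z(T_{F_{Id}}\leq T_{\cup_{\gamma\neq Id}F_\gamma})$. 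Your horoball-excursion picture reaches the same conclusion, but the identification of the Cauchy tail for the displacement of a hyperbolic excursion and the subsequent capture argument are less immediate than in the paper's two-circle formulation.
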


\begin{proof}
The proof is a little bit technical but the lemma is essentially a consequence of the following simple fact. Consider an Euclidean Brownian motion in $\mathbb{C}$ starting at $z=\frac{1}{2}$. Let $V$ be the hitting time of $|z|=1$. For any Brownian path $\omega$ starting at $z=\frac{1}{2}$, append $\omega_{|[0,V(\omega)]}$ with the geodesic going back to $z=\frac{1}{2}$. With probability $1$, $\omega_{|[0,V(\omega)]}$ does not visit $z=0$ and $\omega(V(\omega))\neq-1$. Hence, this defines an element $\alpha(\omega)$ of $\pi_1(\mathbb{D}^*)$. Denoting by $\alpha_0$ a generator of $\pi_1(\mathbb{D}^*)$, there exists $n(\omega)\in\mathbb{Z}$ such that $\alpha(\omega)=\alpha_0^{n(\omega)}$. It is not difficult to see that $n(\omega)$ is distributed with respect to a Cauchy law. By a simple calculation, this implies that:
$$\Prob(n(\omega)\geq l)\sim_{l\to\infty}\frac{1}{l}.$$
 
Now, let us give a formal proof. Denote by $d_0$ and $D_0$ the two closed horodiscs around the puncture $p_0$ corresponding to the loop $\alpha_0$. 
Let $B_k'=\{X_{N_1}=\alpha_0^n \text{ with } n\geq \widetilde{h}(k)\}$. We have:
$$\tProb(B_k')=\tProb(B_k).$$
Moreover,
$$B_k'\supset\{X_{1}=\alpha_0^n \text{ with } n\geq \widetilde{h}(k)\}\cap\{N_1=1\}$$
By the definition of the sequence $N_k$ and inequality \eqref{kappa}, we have:
$$\{N_1=1\}=\{\alpha_1\leq\kappa_1\}\supset\{\alpha_1\leq\frac{1}{C^2}\},$$
we deduce that:
$$\tProb(B_k)\geq\frac{1}{C^2}\cdot\Prob_m\big(\{X_{1}=\alpha_0^n \text{ with } n\geq \widetilde{h}(k)\}\big),$$
where  $m$ is the Lebesgue measure on $\partial V_{Id}$.

Recall that, for the discretization procedure, we fixed a base point $x_0\in\Hyp$. As at the beginning of the proof of proposition \ref{mainpro}, let us fix a Dirichlet fundamental domain $D$ associated with this base point and denote by $p:\Hyp\to\Hyp/\Gamma=\Sigma\setminus\mathcal{S}$ the universal covering map. For every $X\in\pi_1(\Sigma\setminus\mathcal{S})$, we denoted by $F_{X}=X\cdot\overline{D_{\Hyp}(x_0,\delta)}$ and $V_{X}=X\cdot\overline{D_{\Hyp}(x_0,\delta')}$ with $\delta<\delta'$. We can choose $\delta'$ small enough so that $V_{Id}$ is included in the interior of the Dirichlet fundamental domain.
On the probability space $(\Omega,\Prob_m=\int\Prob_x\cdot dm(x))$, define the set $B_k''$ as the set of Brownian paths 
\begin{enumerate}
\item hitting $p^{-1}(d_0)$ before hitting the boundary of $D$,
\item then hitting the boundary of $p^{-1}(D_0)$ at a point of $\alpha_0^n\cdot D$ with $n\geq\tilde{h}(k)$,
\item and then hitting $F_{\alpha_0^n}$ before hitting $\cup_{\alpha\neq\alpha_0^n}F_{\alpha}$.
\end{enumerate}
By construction, we have $\Prob_m\big(\{X_{1}=\alpha_0^n \text{ with } n\geq \widetilde{h}(k)\}\big)\geq\Prob_m(B_k'')$. Using the notation $T_A$ for the hitting time of a closed set $A$ and recalling that $L_T$ stands for the length of the homotopic word between the times $0$ and $T$, we have by the strong Markov property:
\begin{align*}
\Prob_m(B_k'')\geq &\sum_{n\geq \widetilde{h}(k)}\Prob_m\big(T_{p^{-1}(d_0)}\leq T_{\partial D}\big)\cdot \frac{1}{2}\inf_{y\in D\cap \partial p^{-1}(d_0)}\Prob_y\big(L_{T_{\partial p^{-1}(D_0)}}=n\big)\\
&\cdot \inf_{z\in \alpha_0^n D\cap\partial p^{-1}(D_0)}\Prob_z\big(T_{F_{\alpha_0^n}}\leq T_{\cup_{\gamma\neq \alpha_0^n}F_{\gamma}}\big).
\end{align*}
As $\alpha_0^n$ is an isometry, we have:
$$\inf_{z\in \alpha_0^n D\cap\partial p^{-1}(D_0)}\Prob_z\big(T_{F_{\alpha_0^n}}\leq T_{\cup_{\gamma\neq \alpha_0^n}F_{\gamma}}\big)=\inf_{z\in D\cap\partial p^{-1}(D_0)}\Prob_z\big(T_{F_{Id}}\leq T_{\cup_{\gamma\neq Id}F_{\gamma}}\big),$$
and we deduce:

\begin{align*}
\Prob_m(B_k'') &\geq\Prob_m\big(T_{D\cap p^{-1}(d_0)}\leq T_{\partial D}\big)\cdot \inf_{z\in D\cap\partial p^{-1}(D_0)}\Prob_z\big(T_{F_{Id}}\leq T_{\cup_{\gamma\neq Id}F_{\gamma}}\big)\\
&\hspace{2cm}\cdot\frac{1}{2}\sum_{n\geq \widetilde{h}(k)}\inf_{y\in D\cap \partial p^{-1}(d_0)}\Prob_y\big(L_{T_{\partial p^{-1}(D_0)}}=n\big)\\
&=\Prob_m\big(T_{D\cap p^{-1}(d_0)}\leq T_{\partial D}\big)\cdot \inf_{z\in D\cap\partial p^{-1}(D_0)}\Prob_z\big(T_{F_{Id}}\leq T_{\cup_{\gamma\neq Id}F_{\gamma}}\big)\\
&\hspace{2cm}\cdot \frac{1}{2} \inf_{y\in D\cap \partial p^{-1}(d_0)}\Prob_y\big(L_{T_{\partial p^{-1}(D_0)}}\geq\widetilde{h}(k)\big).
\end{align*}

One convinces easily that there is a constant $C>0$ such that:
$$\Prob_m\big(T_{D\cap p^{-1}(d_0)}\leq T_{\partial D}\big)\cdot\inf_{z\in D\cap\partial p^{-1}(D_0)}\Prob_z\big(T_{F_{Id}}\leq T_{\cup_{\gamma\neq Id}F_{\gamma}}\big)\geq C$$
In order to evaluate the last term $\inf_{y\in D\cap\frac{1}{2} p^{-1}(d_0)}\Prob_y\big(L_{T_{\partial p^{-1}(D_0)}}\geq\widetilde{h}(k)\big)$, it is easier to think the Brownian motion running on the surface $\Sigma$ instead of thinking it on its universal cover $\Hyp$. As in the proof of proposition \ref{mainpro}, we identify $D_0$ (resp. $d_0$) with $D_{eucl}(0,1)\setminus\{0\}\subset \mathbb{C}$  (resp. $D_{eucl}(0,r)\setminus\{0\}$ with $0<r<1$) by some bijective conformal map. For an Euclidean Brownian motion starting at a point $\tau$ of $\partial D_{eucl}(0,r)$, let $\theta_t$ be a local continuous determination of the winding around $0$. If $V$ denotes the hitting time of $\partial D_{eucl}(0,1)$, then we have:
$$\inf_{y\in D\cap p^{-1}(d_0)}\Prob_y\big(L_{T_{\partial p^{-1}(D_0)}}\geq\widetilde{h}(k)\big)=\Prob_{\tau}\big(\theta_V\geq 2\pi\widetilde{h}(k)\big).$$ As the law of $\theta_V$ is that of a Cauchy law with parameter $a=\log\frac{1}{r}$ (i.e. the distribution law of $\theta_V$ is $\frac{1}{\pi}\cdot\frac{a}{a^2+x^2}$), a simple calculation gives that
$$\Prob_{\tau}\big(\theta_V\geq 2\pi\widetilde{h}(k)\big)\sim\frac{1}{\widetilde{h}(k)}$$
and the lemma is proved.

\end{proof}

\paragraph{End of the proof of the theorem.} 
We start with the following general result from probability theory:
\begin{lem}\label{lemmeproba}
Let $(A_n)_{n\geq 1}$ and $(B_n)_{n\geq 1}$ be two sequences of events. Assume that:
\begin{enumerate}
\item The two sequences $(A_n)_{n\geq 1}$ and $(B_n)_{n\geq 1}$ are independent.
\item Almost surely, $B_n$ is realized infinitely often.
\item $\Prob(A_n)\underset{n\to\infty}\longrightarrow 1$.
\end{enumerate}
Then, almost surely $A_n\cap B_n$ is realized infinitely often.
\end{lem}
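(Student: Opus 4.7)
My plan is to reduce the problem to showing that $\Prob\bigl(\bigcup_{n \geq K}(A_n \cap B_n)\bigr) = 1$ for every integer $K \geq 1$. Indeed, by continuity of measure along the decreasing sequence of tail unions, this would give $\Prob\bigl(\bigcap_K \bigcup_{n \geq K}(A_n \cap B_n)\bigr) = 1$, which is exactly the statement that $A_n \cap B_n$ is realized infinitely often almost surely. Fix then $K \geq 1$.

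The idea for handling a fixed $K$ is to evaluate the $A$-sequence not at deterministic times but along the random subsequence of indices where some $B_n$ is realized. I would let $T_K^{(j)}$ denote the $j$-th smallest integer $n \geq K$ for which $B_n$ is realized; hypothesis $2$ guarantees that almost surely every $T_K^{(j)}$ is finite and $T_K^{(j)} \to \infty$ when $j \to \infty$. Setting $\GG = \sigma(B_n : n \geq 1)$, each $T_K^{(j)}$ is $\GG$-measurable, while by hypothesis $1$ (in its strong form: independence of the full $\sigma$-algebras) the $\sigma$-algebra $\sigma(A_n : n \geq 1)$ is independent of $\GG$. Conditioning on $\GG$ therefore yields
$$\Prob\bigl(A_{T_K^{(j)}} \,\big|\, \GG\bigr) = \Prob(A_m)\big|_{m = T_K^{(j)}}$$
almost surely. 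Since $T_K^{(j)} \to \infty$ and $\Prob(A_m) \to 1$ by hypothesis $3$, dominated convergence forces $\Prob(A_{T_K^{(j)}}) \to 1$ as $j \to \infty$.

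From this the conclusion follows at once. Because $\Prob\bigl(\bigcup_j A_{T_K^{(j)}}\bigr) \geq \sup_j \Prob(A_{T_K^{(j)}}) = 1$, almost surely there exists some $j$ with $A_{T_K^{(j)}}$ realized; but by the very definition of $T_K^{(j)}$ the event $B_{T_K^{(j)}}$ is then also realized, so $A_n \cap B_n$ holds for some $n \geq K$ almost surely, giving the fixed-$K$ statement.

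The main obstacle I anticipate is that the $(A_n)$ are not assumed to be mutually independent, so neither the second Borel--Cantelli lemma nor any direct product estimate on the $A_n$'s applies. The role of the conditioning trick above is precisely to bypass this: once we condition on $\GG$, the random times $T_K^{(j)}$ behave as deterministic indices, and the only information about the $A$-sequence that is used is the pointwise convergence $\Prob(A_n) \to 1$ from hypothesis $3$, which is all that hypothesis $3$ provides.
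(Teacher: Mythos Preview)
Your proof is correct and follows essentially the same route as the paper: both arguments evaluate the $A$-sequence along the random subsequence of indices where $B_n$ occurs (your $T_K^{(j)}$ is the paper's $N_k$), use the independence hypothesis to compute $\Prob(A_{T_K^{(j)}})$ (you via conditioning on $\GG=\sigma(B_n)$, the paper via the decomposition $\Prob(C_k)=\sum_l \Prob(A_l)\Prob(N_k=l)$), and then deduce the ``infinitely often'' conclusion from $\Prob(A_{T_K^{(j)}})\to 1$. The only cosmetic difference is that you make the last step explicit by intersecting over $K$, whereas the paper invokes it as an easy exercise.
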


\begin{proof}
Almost surely, $B_n$ is realized infinitely often. Hence, for almost every $\omega$, there exists a subsequence $N_k(\omega)$ such that $\omega\in B_{N_k(\omega)}$ for every $k\in\mathbb{N}^*$. Define the event $C_k=\{\omega\in A_{N_k(\omega)}\}$. We are going to prove that $\Prob(C_k)\underset{k\to\infty}\longrightarrow 1$. It is an easy exercice to check that this implie that almost surely, $C_k$ is realized infinitely often. The result follows immediatly.
\begin{align*}
\Prob(C_k)&=\sum_{l\geq 1}\Prob(\{\omega\in A_{N_k(\omega)}\}\cap \{N_k(\omega)=l\})\\
          &=\sum_{l\geq 1}\Prob(A_l\cap \{N_k(\omega)=l\}).
\end{align*}
The sequences $A_n$ and $B_n$ being independent, we deduce that the sequences $A_n$ and $N_k$ are also independent. And then, we get:
$$\Prob(C_k)=\sum_{l\geq 1}\Prob(A_l)\cdot\Prob (N_k=l).$$
Let $\varepsilon>0$. By the third hypothesis, there exist $l_0\in\mathbb{N}^*$ such that for every $l\geq l_0$, we have: $\Prob(A_l)\geq 1-\varepsilon$. Take $k\geq l_0$, we have:
$$\Prob(C_k)=\sum_{l= 1}^{l_0-1}\Prob(A_l)\cdot\Prob (N_k=l)+\sum_{l\geq l_0}\Prob(A_l)\cdot\Prob (N_k=l).$$
As $(N_k)$ is a subsequence, we have $N_k\geq k$. So if $l\leq l_0-1$ and $k\geq l_0$, we have: $\Prob (N_k=l)=0$. Hence we get for $k\geq l_0$:
$$\Prob(C_k)\geq (1-\varepsilon)\cdot \sum_{l\geq l_0}\Prob (N_k=l)=1-\varepsilon.$$
\end{proof}

By lemma \ref{lemm1}, we have $\tProb(B_k)\geq \frac{cst}{\widetilde{h}(k)}$. As $\sum_{k\geq1}\frac{cst}{\widetilde{h}(k)}=\infty$ and the $B_k$ are independent, we deduce by Borel-Cantelli lemma that almost surely $B_k$ is realized infinitely often. 

On the other hand, define $A_k$ by:
$$A_k=\{D\big(z_0,C_1|\lambda|^{-\widetilde{h}(k)}\big)\cap D\big(s(\rho(X_{N_{k-1}})),||\rho(X_{N_{k-1}})||^{-1}\big)=\varnothing.\}$$
where $\lambda$ is the eigenvalue of $\gamma_0$ with $|\lambda|>1$ and $C_1$ is the positive constant associated with $\gamma_0$ given by lemma \ref{lemmetechnique3}. As $||\rho(X_{N_k})||\underset{k\to\infty}\longrightarrow\infty$ and $\widetilde{h}(k)\underset{k\to\infty}\longrightarrow\infty$, we deduce that:
$$\tProb(A_k)= \tProb\left(d(z_0,s(\rho(X_{N_{k-1}})))>C_1|\lambda|^{-\widetilde{h}(k)}+||\rho(X_{N_{k-1}})||^{-1}\right)\underset{k\to\infty}\longrightarrow 1,$$
 by lemma \ref{lemm2}.

As $(A_k)_{k\geq 1}$ and $(B_k)_{k\geq 1}$ are independent, we deduce from lemma \ref{lemmeproba} that almost surely $A_k\cap B_k$ is realized infinitely often. So, almost surely, for infinitely many values of $k$, there exists an integer $n\geq \widetilde{h}(k)$ such that:
\begin{align}
\label{e1} &1.\;\,\rho(X_{N_{k}})=\rho(X_{N_{k-1}})\cdot\gamma_0^n\\
\label{e2} &2.\;\, D\big(z_0,C_1|\lambda|^{-\widetilde{h}(k)}\big)\cap D\big(s(\rho(X_{N_{k-1}})),||\rho(X_{N_{k-1}})||^{-1}\big)=\varnothing.
\end{align}
Fix $k$ satisfying the two previous items. By lemma \ref{lemmetechnique3}, we have:
\begin{equation}
\label{eqqq}
\gamma_0^n\left(D\big(y_0,C_1|\lambda|^{-\widetilde{h}(k)}\big)^c\right)\subset \overline{D\big(z_0,C_1|\lambda|^{-\widetilde{h}(k)}\big)}.
\end{equation}
Let $C_2\geq 1$ be a constant satisfying lemma \ref{lemmetechnique4}. We have:
\begin{align*}
\rho(X_{N_{k}})\left(D\big(y_0,C_1|\lambda|^{-\widetilde{h}(k)}\big)^c\right)&=\rho(X_{N_{k-1}})\cdot\gamma_0^n\left(D\big(y_0,C_1|\lambda|^{-\widetilde{h}(k)}\big)^c\right)\\
&\subset\rho(X_{N_{k-1}})\left(\overline{D\big(z_0,C_1|\lambda|^{-\widetilde{h}(k)}\big)}\right)\\
&\subset \overline{D\big(\rho(X_{N_{k-1}}) z_0\;,\;C_1 C_2|\lambda|^{-\widetilde{h}(k)}\big)}.
\end{align*}
The first line comes from \eqref{e1}, the second one from \eqref{eqqq} and the third one from \eqref{e2} and lemma \ref{lemmetechnique4}.
As $C_2\geq 1$, this implies that:
$$\rho(X_{N_{k}})\left(D\big(y_0,C_1 C_2|\lambda|^{-\widetilde{h}(k)}\big)^c\right)\subset \overline{D\big(\rho(X_{N_{k-1}}) z_0\;,\;C_1 C_2|\lambda|^{-\widetilde{h}(k)}\big)},$$
which implies, by lemma \ref{lemmetechnique2}:
$$||\rho(X_{N_{k}})||\geq \frac{\sqrt{1-(3 C_1 C_2|\lambda|^{-\widetilde{h}(k)})^2}}{3C_1 C_2|\lambda|^{-\widetilde{h}(k)}}\geq A e^{B\widetilde{h}(k)}$$
for some positive constants $A$ and $B$.
This implies that:
$$\frac{1}{h(k)}\cdot\log||\rho(X_{N_{k}})||\geq \frac{\log A}{h(k)}+B\cdot\frac{\widetilde{h}(k)}{h(k)}.$$
The last inequalitie being true for infinitely many values of $k$ ans as $\frac{\widetilde{h}(k)}{h(k)}\underset{k\to\infty}\longrightarrow\infty$, we deduce:
$$\limsup_{k\to\infty}\frac{1}{h(k)}\cdot\log||\rho(X_{N_{k}})||=\infty.$$
The theorem follows from the facts that $Hol_{S_{N_k}}=(\rho(X_{N_{k}}))^{-1}$, $||\rho(X_{N_{k}})||=||\rho(X_{N_{k}})^{-1}||$ and $\frac{S_{N_k}}{k}\underset{k\to\infty}\longrightarrow T>0$.

\section{Proof of theorem \ref{theoprincipal}.}\label{section5}
This theorem was proved in \cite[Theorem 6.1]{Hus} in the non-hyperbolic case. From now on, assume that we are in the hyperbolic case (i.e. there exists a loop around a puncture with hyperbolic monodromy). The idea of the proof in this case is the same as in the non-hyperbolic case. Hence, for more details, we refer the reader to \cite{Hus}.

Let $\widetilde{\omega}=(\omega,\alpha) \in \widetilde{\Omega}$. The path $\omega$ can be written as an infinite concatenation of paths:
$$\omega=\beta_0\ast\omega_0\ast\beta_1\ast\omega_1\ast\cdot\cdot\cdot,$$
where $\beta_0=\omega_{|[0,S_{N_0}]}$, for $k\geq 0$, $\omega_k=\omega_{|[S_{N_k},R_{N_{k+1}}]}$ and for $k\geq 1$, $\beta_k=\omega_{|[R_{N_k},S_{N_k}]}$. Let $c_k(t)=X_{N_k}^{-1}\cdot\omega_k(t-S_{N_k})$. The $(c_k)_{k\in\mathbb{N}}$ form a family of portions of Brownian paths independent and identically distributed: the distibution law of their starting point is the hitting measure of $\partial V_{Id}=\partial D(x_0,\delta')$ for a Brownian motion starting at $x_0$ and they are stopped at time $R_{N_{k+1}}-S_{N_k}$.
\[\omega=\beta_0\ast X_{N_0}c_0\ast\beta_1\ast X_{N_1}c_1\ast\cdot\cdot\cdot\]
Because of the $\rho$-equivariance, we have:
\[\mathcal{D}(\omega)=\mathcal{D}(\beta_0)\ast \rho(X_{N_0})\mathcal{D}(c_0)\ast\mathcal{D}(\beta_1)\ast \rho(X_{N_1})\mathcal{D}(c_1)\ast\cdots\]

Now, we are going to push forward the right random walk $X_{N_k}$ by $\rho$ in order to obtain a right random walk in the monodromy group $\rho(\Gamma)$. For this, we write $\widetilde{\mu}=\rho_{\ast}\mu$ (where $\mu$ is the probability measure in $\Gamma$ defined by the discretization procedure) and $Y_{N_k}=\rho(X_{N_k})$. The process $(Y_{N_k})_{k\geq 0}$ is a realisation of a right random walk in $\rho(\Gamma)$ with law $\widetilde{\mu}$.\\

We have:

\begin{lem}\label{lemmeBk}
Let $\lambda'>\lambda\cdot C\cdot T$ where the constants $\lambda$, $C$ and $T$ are given respectively by \eqref{lambda}, propositions \ref{mainpro} and \ref{propo} and let:
$$A_k=\left\{||Y_{N_k}||\leq e^{\lambda'k\log k}\right\}.$$
Then, $\tProb(A_k)\underset{k\to\infty}\longrightarrow 1.$
\end{lem}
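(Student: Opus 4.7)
The plan is to use \eqref{lambda} to bound $||Y_{N_k}||$ from above by $e^{\lambda L_{S_{N_k}}(\omega)}$, and then to combine the almost sure asymptotic $S_{N_k}/k\to T$ from proposition \ref{propo}(1) with the bound $L_t\leq Ct\log t$ in probability from proposition \ref{mainpro}(2) to control $L_{S_{N_k}}$ in terms of $k\log k$.

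More precisely, as observed at the end of Section \ref{section4}, the holonomy along $\omega|_{[0,S_{N_k}]}$ is represented in $\PSLC$ by $Y_{N_k}^{-1}$, so $||Y_{N_k}||=||Y_{N_k}^{-1}||=||Hol_{S_{N_k}}||$, and inequality \eqref{lambda} yields
$$||Y_{N_k}||\leq e^{\lambda L_{S_{N_k}}(\omega)}.$$
Hence $A_k\supseteq\{L_{S_{N_k}}\leq(\lambda'/\lambda)\, k\log k\}$, and it suffices to prove that this latter event has probability tending to $1$.

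Fix $\delta>0$ small enough so that $\lambda\, C\, T\,(1+\delta)<\lambda'$, and set $t_k:=(1+\delta)Tk$. By proposition \ref{propo}(1), the event $E_k^1:=\{S_{N_k}\leq t_k\}$ satisfies $\tProb(E_k^1)\to 1$. By proposition \ref{mainpro}(2) applied at the deterministic times $t_k\to \infty$, the event $E_k^2:=\{L_{t_k}\leq C\, t_k\log t_k\}$ also has probability tending to $1$. The subtlety is that $L_t$ is not monotone in $t$, so the bound on $L_{t_k}$ does not transfer directly to a bound on $L_{S_{N_k}}$ under $E_k^1$. To circumvent this I would replace $L_t$ by the non-decreasing majorant $\overline L_t$ furnished by the horodisc-crossing decomposition used in the proof of proposition \ref{mainpro}(2), namely the right-hand side of the inequality displayed just before that proof; the same Cauchy-tail and ergodic estimates show that $\Prob_{x_0}(\overline L_{t_k}\leq C\, t_k\log t_k)\to 1$. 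On $E_k^1\cap\{\overline L_{t_k}\leq C t_k\log t_k\}$ one then obtains
$$L_{S_{N_k}}\leq \overline L_{S_{N_k}}\leq \overline L_{t_k}\leq C\,(1+\delta)T\, k\,\bigl(\log k+\log((1+\delta)T)\bigr)\leq (\lambda'/\lambda)\, k\log k$$
for $k$ large enough, and this intersection has probability tending to $1$.

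The main obstacle is the transfer from the deterministic-time statement of proposition \ref{mainpro}(2) to the random-time quantity $L_{S_{N_k}}$, which forces working with the monotone upper bound $\overline L_t$ rather than $L_t$ directly. Everything else is a routine combination of \eqref{lambda}, the a.s.\ convergence $S_{N_k}/k\to T$, and the weak bound on $L_t$, with the choice $\lambda'>\lambda\, C\, T$ giving just enough slack to absorb the factors $(1+\delta)$ and the $O(1)$ term in $\log t_k$.
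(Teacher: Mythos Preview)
Your proposal is correct and follows essentially the same route as the paper: bound $||Y_{N_k}||$ by $e^{\lambda L_{S_{N_k}}}$ via \eqref{lambda}, then combine $S_{N_k}/k\to T$ from proposition \ref{propo} with the weak bound $L_t\le C\,t\log t$ from proposition \ref{mainpro}(2), using the slack $\lambda'>\lambda CT$. The only difference is in the handling of the random time: the paper writes directly
\[
\tProb\!\left(\frac{L_{S_{N_k}}}{S_{N_k}\log S_{N_k}}> C\right)\underset{k\to\infty}\longrightarrow 0
\]
and multiplies by $\dfrac{S_{N_k}\log S_{N_k}}{k\log k}\to T$, invoking proposition \ref{mainpro}(2) at the random time $S_{N_k}$ without further comment, whereas you insert the deterministic time $t_k=(1+\delta)Tk$ and a monotone majorant $\overline L_t$ to make this step explicit. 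Your extra care is justified (the statement of proposition \ref{mainpro}(2) is for deterministic $t$), and the horodisc decomposition in its proof indeed yields such a monotone upper bound, though you should be precise that the majorant you need is, say, $\overline L_t=\sup_{s\le t}L_s$, since the right-hand side of the displayed inequality you point to is not itself monotone in $t$.
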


\begin{proof}
By \eqref{lambda}, we have $||Y_{N_k}||=||Hol_{S_{N_k}}||\leq e^{\lambda L_{S_{N_k}}}$. Hence:
\begin{align*}
\tProb(A_k)&\geq\tProb\left(e^{\lambda L_{S_{N_k}}}\leq e^{\lambda'k\log k}\right)\\
&=\tProb\left(\frac{L_{S_{N_k}}}{k\log k}\leq \frac{\lambda'}{\lambda}\right)\\
&=\tProb\left(\frac{L_{S_{N_k}}}{S_{N_k}\log S_{N_k}}\cdot\frac{S_{N_k}\log S_{N_k}}{k\log k}\leq C\cdot\frac{\lambda'}{\lambda C T}T\right)\\
&\geq\tProb\left(\left\{\frac{L_{S_{N_k}}}{S_{N_k}\log S_{N_k}}\leq C\right\}\cap\left\{\frac{S_{N_k}\log S_{N_k}}{k\log k}\leq \frac{\lambda'}{\lambda C T}T\right\}\right)\\
&\geq 1-\tProb\left(\frac{L_{S_{N_k}}}{S_{N_k}\log S_{N_k}}> C\right)-\tProb\left(\frac{S_{N_k}\log S_{N_k}}{k\log k}> \frac{\lambda'}{\lambda C T}T\right)
\end{align*}
As $\frac{S_{N_k}}{k}\underset{k\to\infty}\longrightarrow T$ and $\frac{\lambda'}{\lambda C T}>1$, we deduce that $\tProb\left(\frac{S_{N_k}\log S_{N_k}}{k\log k}> \frac{\lambda'}{\lambda C T}T\right)\underset{k\to\infty}\longrightarrow 0$. And, by proposition \ref{mainpro}, we have $\tProb\left(\frac{L_{S_{N_k}}}{S_{N_k}\log S_{N_k}}> C\right)\underset{k\to\infty}\longrightarrow 0$. The proof is complete.
\end{proof}

\begin{lem}\label{lemmeEk}
Let:
$$B_k=\{\mathcal{D}(c_{k})\cap D(s_{k},e^{-2\lambda'k\cdot\log k})\neq \varnothing\}\cap\{\mathcal{D}(c_{k})\cap (D(s_{k},||Y_{N_k}||^{-1}))^c \neq \varnothing.\}$$
where $s_k$ is the south pole of $Y_{N_k}$ coming from Cartan's decomposition. Then almost surely, $B_k$ occurs infinitely often.
\end{lem}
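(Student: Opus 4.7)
The approach is a conditional Borel--Cantelli argument in the filtration $\mathcal{G}_k := \sigma(Y_{N_0},\ldots,Y_{N_k})$. The south pole $s_k$ is $\mathcal{G}_k$-measurable, whereas the strong Markov property at $S_{N_k}$, combined with the translation $c_k = X_{N_k}^{-1}\cdot\omega_{|[S_{N_k},R_{N_{k+1}}]}$ and the refreshment property built into the Furstenberg--Lyons--Sullivan discretization, ensures that $c_k$ is independent of $\mathcal{G}_k$ and distributed according to a fixed law $\mu^c$ not depending on $k$. Consequently
$$\tProb(B_k\mid\mathcal{G}_k) = \varphi(s_k, r_k, R_k),$$
where $r_k := e^{-2\lambda' k\log k}$, $R_k := \|Y_{N_k}\|^{-1}$, and $\varphi(p,r,R)$ is the probability that a fresh $\mu^c$-distributed excursion has developed image meeting both $D(p,r)$ and $D(p,R)^c$.

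The outer (``exit'') condition is essentially free for $k$ large: since $R_k\to 0$ almost surely by Proposition \ref{proma}, while for a generic excursion $\mathcal{D}(c)$ has diameter bounded below in probability (because $\mathcal{D}$ is a non-constant open map and $\mu^c$ has non-degenerate support on $\partial V_{Id}$), one checks that $\tProb(\mathcal{D}(c)\cap D(s_k,R_k)^c\neq\varnothing\mid\mathcal{G}_k)\geq 1/2$ for $k$ large, uniformly in $s_k$. So it suffices to exhibit a uniform lower bound of the form
$$\tProb\bigl(\mathcal{D}(c)\cap D(p,r)\neq\varnothing\bigr)\ \geq\ \frac{C_0}{\log(1/r)}\qquad (\forall\,p\in\CPun,\ 0<r<r_0),$$
from which $\tProb(B_k\mid\mathcal{G}_k)\gtrsim 1/(k\log k)$, hence $\sum_k\tProb(B_k\mid\mathcal{G}_k)=\infty$ a.s., and conditional Borel--Cantelli yields $B_k$ infinitely often almost surely.

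To prove this uniform hitting lower bound I would follow the template of \cite[\S 6]{Hus}. Given $p\in\CPun$, use surjectivity of $\mathcal{D}$ to pick $y\in\Hyp$ with $\mathcal{D}(y)=p$, and then use cofiniteness of $\Gamma$ to find $\gamma\in\Gamma$ such that $\gamma y$ lies in a fixed compact region $K\subset\Hyp$ on which $|\mathcal{D}'|$ is bounded; the Koebe $1/4$-theorem then provides a Euclidean disc of radius $\gtrsim r$ centered at $\gamma y$ that is mapped by $\mathcal{D}$ into $D(p,r)$. The probability that the generic excursion $c$, whose projection to $S=\Hyp/\Gamma$ is a Brownian motion of positive finite expected length, visits such a disc inside $K$ is then of order $1/\log(1/r)$ by classical Green-function estimates for Brownian motion on the finite-volume surface $S$.

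The principal obstacle is securing the derivative bound $|\mathcal{D}'(\gamma y)|\leq M$ uniformly in $p$. Since $\mathcal{D}$ may have arbitrarily large derivative in the cusp neighborhoods of $S$, one must ensure that every $p\in\CPun$ admits a preimage landing in the compact core of a fundamental domain of $\Gamma$, not only in a cusp. In \cite{Hus} this is handled for parabolic cusps using the explicit asymptotic model of $\mathcal{D}$ there. The novelty here is to handle the hyperbolic cusps, for which one would write $\mathcal{D}$ in an analogous normal form (coming from the conjugation of $\rho(\alpha_0)$ into diagonal form and the local uniformizer near the puncture), and check that all but a controlled, $\rho(\Gamma)$-moveable portion of $\CPun$ is reached from the compact core; this local/global analysis near hyperbolic cusps is the main technical step distinguishing the present proof from the one in \cite{Hus}.
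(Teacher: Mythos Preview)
Your overall strategy---conditional Borel--Cantelli in the filtration generated by the increments, independence of $c_k$ from $s_k$ and $\|Y_{N_k}\|$, disposal of the ``exit'' event using $\|Y_{N_k}\|\to\infty$, and a uniform lower bound $\gtrsim 1/(k\log k)$ on the probability that $\mathcal{D}(c_k)$ hits the small disc---is exactly the paper's approach.

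Where you diverge is in what you call the ``principal obstacle.'' You propose to analyze $\mathcal{D}$ near the hyperbolic cusps via a local normal form, in order to guarantee that every $p\in\CPun$ has a $\mathcal{D}$-preimage in the compact core. The paper does nothing of the sort, and no such analysis is needed. The point is purely soft: $\mathcal{D}$ is a non-constant holomorphic map, hence open; since it is onto and $\CPun$ is compact, the open cover $\{\mathcal{D}(U_y):y\in\Hyp\}$ (with $U_y$ relatively compact neighborhoods) admits a finite subcover, and the union of the corresponding $\overline{U_{y_i}}$ is a compact $K\subset\Hyp$ with $\mathcal{D}(K)=\CPun$. On $K$ the derivative and the local degree of $\mathcal{D}$ are uniformly bounded, so for every $p\in\CPun$ the preimage $\mathcal{D}^{-1}\bigl(D(p,e^{-2\lambda' k\log k})\bigr)\cap K$ contains a disc of radius $e^{-\alpha k\log k}$ for some fixed $\alpha>0$. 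The Brownian hitting probability of such a disc is then $\gtrsim 1/(k\log k)$ by the standard estimate (the paper cites \cite[Lemma~6.6]{Hus}). Nothing about the cusp type enters; the hypothesis ``$\mathcal{D}$ onto'' is all that is used, in both the parabolic and hyperbolic settings.

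So your plan is correct but the final paragraph is an unnecessary detour: drop the cusp normal-form analysis and replace it with the two-line compactness argument above.
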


\begin{proof}
Let $\mathcal{F}_k$ be the canonical filtration associated with the sequence $(B_k)_{k\geq 0}$ (i.e $\mathcal{F}_k$ is the $\sigma$-field generated by $B_0,\cdots,B_k$). By the second Borel-Cantelli lemma (see \cite[Theorem 5.3.2]{Dur}), it is enough to prove that almost surely:
$$\sum_{k\geq 0} \tProb(B_k/\mathcal{F}_{k-1})=\infty.$$
As $||Y_{N_k}||^{-1}\underset{k\to\infty}\longrightarrow 0$, the event $\{\mathcal{D}(c_{k})\cap (D(s_{k},||Y_{N_k}||^{-1}))^c \neq \varnothing.\}$ is certain for $k$ big enough. Hence:
$$\tProb(B_k/\mathcal{F}_{k-1})=\tProb(\mathcal{D}(c_{k})\cap D(s_{k},e^{-2\lambda'k\cdot\log k})\neq \varnothing/\mathcal{F}_{k-1}).$$
Note also that by construction, $s_k$ depends only on the set $X_{N_1},\cdots,X_{N_k}$ (i.e. it depends on the set $\gamma_{N_1},\cdots,\gamma_{N_{k}}$) and $c_k$ depends only on $X_{N_k}^{-1}X_{N_{k+1}}=\gamma_{N_{k+1}}$. As the $\gamma_{N_i}$ are mutually independent, we deduce that $s_k$ and $c_k$ are independent. Hence almost surely:
$$\tProb\left(B_k/\mathcal{F}_{k-1}\right)\geq \underset{y\in\CPun}\inf\tProb\left(\mathcal{D}(c_{k})\cap D(y,e^{-2\lambda'k\log k})\neq \varnothing/\mathcal{F}_{k-1}\right).$$
As $\mathcal{D}(c_{k})$ and $\mathcal{F}_{k-1}$ are independent, we deduce that almost surely:
$$\tProb\left(\mathcal{D}(c_{k})\cap D(y,e^{-2\lambda'k\log k})\neq \varnothing/\mathcal{F}_{k-1}\right)=\tProb\left(\mathcal{D}(c_{k})\cap D(y,e^{-2\lambda'k\log k})\neq \varnothing\right).$$
As $\DD$ is onto, there is a compact $K\in\Hyp$ and a positive constant $\alpha$ such that for every $y\in\CPun$, and every $k$ big enough, $\DD^{-1}\left(D(y,e^{-2\lambda'k\cdot\log k})\right)\cap K$ contains a disc $D_{k,y}$ with radius $e^{-\alpha k\log k}$. Hence, 
\begin{align*}
\tProb(E_k/\mathcal{F}_{k-1})&\geq \inf_{y\in\CPun}\tProb\left(c_{k}\cap \DD^{-1}(D(y,e^{-2\lambda'k\log k})\neq \varnothing\right)\\
&\geq \inf_{y\in\CPun}\tProb\left( c_{k}\cap D_{k,y}\neq \varnothing\right)\\
&\geq \inf_{y\in\CPun}\Prob_{\varepsilon_{x_0}}\left(T_{D_{k,y}}\leq T_{\cup F_{\gamma}}\right)
\end{align*}
where $\varepsilon_{x_0}$ is the hitting measure of $\partial V_{Id}$ for a Brownian motion starting at $x_0$ and $T_A$ is the hitting time of $A$.
As for every $y\in\CPun$, the disc $D_{k,y}$ has radius $e^{-\alpha k\log k}$ and is included in the compact $K$, this last probability is bigger than $\frac{cst}{k\log k}$ (this is a direct adaptation of \cite[Lemma 6.6]{Hus}). As $\sum_{k\in\mathbb{N}^*}\frac{1}{k\log k}=\infty$, the proof is complete.

\end{proof}

\begin{pro}
Almost surely, for infinitely many values of $k$, we have:
\begin{enumerate}
\item $Y_{N_{k}}\big(D(s_{k},||Y_{N_{k}}||^{-1})^c\big)\subset \overline{D(n_{k},||Y_{N_{k}}||^{-1})},$
\item $\mathcal{D}(c_{k})\cap (D(s_{k},||Y_{N_k}||^{-1}))^c \neq \varnothing,$
\item $Y_{N_{k}}\big(\overline{D(s_{k},e^{-2\lambda' k\log k})}\big)\subset D\left(n_k, \frac{1}{2}\right)^c,$
\item $\mathcal{D}(c_{k})\cap D(s_{k},e^{-2\lambda'k\log k})\neq \varnothing$.
\end{enumerate}
\end{pro}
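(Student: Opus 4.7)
I would verify the four items in two stages, combining Lemmas \ref{lemmeBk}, \ref{lemmeEk}, and \ref{lemmeproba} with the geometric estimates of Section \ref{section2}. First, Items 1 and 2 hold for all sufficiently large $k$, almost surely. By Proposition \ref{proma} applied to the right random walk $(Y_{N_k})$ on the non-elementary group $\rho(\Gamma)$, one has $\|Y_{N_k}\|\to\infty$ a.s. Once $\|Y_{N_k}\|>1$, the monotonicity built into the boundary-case identity \eqref{equainf} applied at the radius $\|Y_{N_k}\|^{-1}$, which exceeds the critical radius $1/\sqrt{1+\|Y_{N_k}\|^2}$, yields Item 1. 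Item 2 holds because the disc $D(s_k,\|Y_{N_k}\|^{-1})$ shrinks to a point while $\mathcal{D}(c_k)$ is a non-degenerate random curve; this can be made rigorous by a Fubini argument exploiting the independence of $c_k$ from $Y_{N_k}$ together with the non-atomicity of the hitting law on $\partial V_{\mathrm{Id}}$.

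Second, I would show that Items 3 and 4 hold simultaneously for infinitely many $k$. The event $A_k=\{\|Y_{N_k}\|\le e^{\lambda' k\log k}\}$ of Lemma \ref{lemmeBk} implies Item 3: an elementary manipulation of \eqref{equasup} shows that $e^{-2\lambda' k\log k}\le\sqrt{3/(3+\|Y_{N_k}\|^4)}$ on $A_k$ for large $k$, and since $\gamma(\overline{D(s,\alpha)})$ retreats deeper into $D(n,1/2)^c$ as $\alpha$ decreases below the critical value $\sqrt{3/(3+\|\gamma\|^4)}$, the desired inclusion follows. Item 4 is exactly the first conjunct in the definition of the event $B_k$ of Lemma \ref{lemmeEk}. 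Hence the proposition reduces to showing that $A_k\cap B_k$ occurs infinitely often almost surely, which, modulo independence, would follow at once from Lemma \ref{lemmeproba} combined with $\tProb(A_k)\to 1$ (Lemma \ref{lemmeBk}) and $B_k$ i.o. a.s. (Lemma \ref{lemmeEk}).

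The main obstacle is that $A_k$ and $B_k$ share the random variable $Y_{N_k}$ (through $\|Y_{N_k}\|$ and through $s_k$ respectively), so the two sequences are not literally independent and Lemma \ref{lemmeproba} does not apply as stated. I would therefore redo the conditional second Borel-Cantelli argument used to prove Lemma \ref{lemmeEk}, but on the joint event: setting $\mathcal{F}_k:=\sigma(X_{N_j},c_j:j\le k)$ and using that $\gamma_{N_k}$ and $c_k$ are independent given $\mathcal{F}_{k-1}$, the uniform lower bound $\inf_{y\in\CPun}\tProb(\mathcal{D}(c_k)\cap D(y,e^{-2\lambda' k\log k})\neq\varnothing)\ge \mathrm{cst}/(k\log k)$ extracted from the proof of Lemma \ref{lemmeEk} gives, after integrating over $\gamma_{N_k}$ first,
\[
\tProb(A_k\cap B_k\mid\mathcal{F}_{k-1})\;\ge\;\frac{\mathrm{cst}}{k\log k}\,\tProb(A_k\mid\mathcal{F}_{k-1}).
\]
The hardest remaining step, and what I expect to be the true technical obstacle, is showing that $\sum_k \tProb(A_k\mid\mathcal{F}_{k-1})/(k\log k)=\infty$ almost surely. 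Since $\mathbb{E}[\tProb(A_k\mid\mathcal{F}_{k-1})]=\tProb(A_k)\to 1$, reverse Fatou forces $\limsup_k \tProb(A_k\mid\mathcal{F}_{k-1})=1$ a.s.; coaxing this into divergence of the $1/(k\log k)$-weighted sum likely requires a quantitative strengthening of Proposition \ref{mainpro} Item 2 making $\tProb(A_k^c)$ summable in $k$, in which case first Borel-Cantelli gives $A_k$ eventually and the conjunction with the already-infinitely-often event $B_k$ is immediate.
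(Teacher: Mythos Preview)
Your overall plan matches the paper's: derive Items~1--4 on the event $A_k\cap B_k$ using the geometric estimates of Section~\ref{section2}, and conclude that $A_k\cap B_k$ occurs infinitely often by combining Lemmas~\ref{lemmeBk}, \ref{lemmeEk}, and~\ref{lemmeproba}. The paper's proof is in fact very short: it simply cites the three lemmas for the ``infinitely often'' claim, then checks Items~1 and~3 via \eqref{equainf} and \eqref{equasup2} respectively, while Items~2 and~4 are precisely the two conjuncts defining $B_k$. One minor simplification over your plan: you argue Item~2 separately as an ``eventually'' statement via a Fubini-type argument, but since Item~2 is literally the second half of the definition of $B_k$, it comes for free on $B_k$, just as Item~4 does.

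Where you diverge from the paper is more substantive. You correctly observe that $A_k$ and $B_k$ both depend on $Y_{N_k}$ (through $\|Y_{N_k}\|$ and through $s_k$, $\|Y_{N_k}\|$ respectively), so the independence hypothesis of Lemma~\ref{lemmeproba} is not satisfied as stated. The paper does not address this point: it simply invokes Lemma~\ref{lemmeproba} without checking independence. (Contrast this with the application in Section~\ref{section4}, where the $A_k$ there depends only on $X_{N_{k-1}}$ and the $B_k$ there only on $\gamma_{N_k}$, so independence genuinely holds.) You have therefore spotted a gap in the paper's own argument. Of your two proposed repairs, the second --- strengthening Proposition~\ref{mainpro} so that $\tProb(A_k^c)$ is summable, whence $A_k$ holds eventually a.s.\ by Borel--Cantelli and the intersection with the infinitely-often event $B_k$ is immediate --- is the cleaner route and avoids the delicate conditional divergence issue you flag in your first approach.
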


\begin{proof}
By lemmas \ref{lemmeBk}, \ref{lemmeEk} and \ref{lemmeproba}, we have that almost surely $A_k\cap B_k$ occurs infinitely often. Choose $k\in\mathbb{N}$ such that $A_k\cap B_k$ occurs. Then Item 2 and 4 are satisfied. We also have $||Y_{N_k}||\leq e^{\lambda'k\log k}$. Hence, applying \eqref{equasup2} in lemma \ref{lemmetechnique1}, we obtain:
\begin{align*}
Y_{N_k}\left(\overline{D(s_{k},e^{-2\lambda' k\cdot\log k})}\right)&\subset Y_{N_k}\left(\overline{D(s_{k},||Y_{N_{k}}||^{-2})}\right)\\
&=D\left(n_k, \frac{1}{2}\right)^c.
\end{align*}
So Item 3 is satisfied. To check that Item 1 is also true, let $\alpha_k=\frac{1}{\sqrt{1+||Y_{N_k}||^2}}$ (which is equivalent to $||Y_{N_k}||=\frac{\sqrt{1-\alpha_k^2}}{\alpha_k}$). Noticing that $\alpha_k\leq ||Y_{N_k}||^{-1}$ and using \eqref{equainf} in lemma \ref{lemmetechnique1}, we obtain:
\begin{align*}
Y_{N_{k}}\big(D(s_{k},||Y_{N_{k}}||^{-1})^c\big)&\subset Y_{N_{k}}\big(D(s_{k},\alpha_k)^c\big)\\
&=\overline{D(n_{k},\alpha_k)}\\
&\subset \overline{D(n_{k},||Y_{N_{k}}||^{-1})}.
\end{align*}
\end{proof}

\paragraph{End of the proof of Theorem \ref{theoprincipal}. }
The previous proposition implies that almost surely, for infinitely many values of $k$, the portion $\rho(X_{N_{k}})\mathcal{D}(c_{k})$ of the path $\mathcal{D}(\omega)$ visits $\overline{D(n_{k},||Y_{N_{k}}||^{-1})}$ and $D\left(n_{k},\frac{1}{2}\right)^c$. As $||Y_{N_{k}}||^{-1}\underset{k\to\infty}\longrightarrow 0$, this proves that $\mathcal{D}(\omega(t))$ does not have limit when $t$ goes to infinity.\\

\section{Proof of theorem \ref{theoRiccati}.}\label{section6}
We repeat the proof of \cite{Hus}.
Let $(M,\Pi,\FF,\Sigma,\rho,\mathcal{S})$ be a Riccati foliation. Let $F$ be a non invariant fiber, $s_0$ a section of $\Pi$ and denote by $S_0:=s_0(\Sigma)$. By sliding along the leaves, We can transport the unique complex projective structure defined on $F\cong\CPun$. We obtain a branched complex projective structure on $S_0\setminus s_0(\mathcal{S})\cong \Sigma\setminus\mathcal{S}$ whose monodromy representation is the monodromy representation of the foliation (the branched points are the points of $S_0$ where the foliation is tangent to $S_0$). By definition, if $p_0\in S_0$ is not a branched point and if $h:(F,p)\to (S_0,p_0)$ is a holonomy germ of the foliation, the analytic continuation of $h^{-1}$ defines a developing map $\DD_0$ of the complex projective structure on $S_0\setminus s_0(\mathcal{S})$. By assumption, $\DD_0$ is onto, then we can apply theorem \ref{theoprincipal}: for almost every Brownian path $\omega\in\Omega_{p_0}$, the path $(\DD_0(\omega(t)))_{t\geq 0}$ does not have limit when $t$ goes to $\infty$. By conformal invariance of Brownian motion, this is equivalent to the following: $h$ can be analytically continued along almost every Brownian path $\omega\in\Omega_{p}$. This finishes the proof of Item $1$.\\

For the proof of the second item, consider another section $s_1$ of $\Pi$ and denote by $S_1:=s_1(\Sigma)$ and let $h:(S_1,p_1)\to(S_0,p_0)$ be a holonomy germ.
\begin{description}
\item \textbf{First case:} $p_1$ (and then $p_0$) does not belong to $\Pi^{-1}(\mathcal{S})$. Then $h$ can be written $h=\mathcal{D}_0^{-1}\circ\mathcal{D}_1$ where $\mathcal{D}_1$ is also a developing map associated with the branched projective structure on $S_1\setminus s_1(\mathcal{S})$. By conformal invariance, the image of a generic Brownian path starting at $p_1$ by $\DD_1$ is a Brownian path in $\mathbb{C}\mathbb{P}^1$ (possibly stopped at a stopping time $T<\infty$ because we did not assume that $\DD_1$ is onto) along which $\DD_0^{-1}$ can be analytically continued by Item $1$.
\item \textbf{Second case:} $p_1$ (and then $p_0$) belongs to $\Pi^{-1}(\mathcal{S})$. Then, $h$ is the germ of a holonomy biholomorphism between two small neighborhood of $p_1$ in $S_1$ and $p_0$ in $S_0$. We conclude by the first case and the strong Markov property.
\end{description}

Nicolas Hussenot Desenonges\newline
  Instituto de Matem\'{a}tica, Universidade Federal do Rio de Janeiro, \newline
  Ilha do Fundao, 68530, CEP 21941-970, Rio de Janeiro, RJ, Brasil \newline
  \textit {e-mail}: nicolashussenot@hotmail.fr

\end{document}